\theoremstyle{plain}%
\newtheorem{theorem}{Theorem}[section]%
\newtheorem*{thm*}{Theorem}%
\newtheorem{thmx}{Theorem}%
\newtheorem{thmxx}{Theorem}%
\newtheorem{proposition}[theorem]{Proposition}%
\newtheorem{lemma}[theorem]{Lemma}%
\theoremstyle{remark}%
\newtheorem{remark}[theorem]{Remark}%
\newtheorem{corollary}[theorem]{Corollary}%
\theoremstyle{definition}%
\newtheorem{definition}[theorem]{Definition}%
\newcommand{\C}{\mathbb{C}}
\newcommand{\R}{\mathbb{R}}
\renewcommand{\S}{\mathbb{S}}
\newcommand{\dK}{\partial\!K}
\DeclareMathOperator{\dmesure}{d}
\newcommand{\dx}{\dmesure\!}
\DeclareMathOperator{\trace}{trace}
\DeclareMathOperator{\Span}{span}
\DeclareMathOperator{\End}{End}
\DeclareMathOperator{\Id}{Id}
\newcommand{\delr}{\partial_r}
\newcommand{\Jdelr}{J\!\delr}
\newcommand{\Ndr}{\nabla_{\delr}}
\newcommand{\Ldr}{\mathcal{L}_{\delr}}
\newcommand{\JE}{J\!E}
\renewcommand{\bar}{\overline}
\renewcommand{\tilde}{\widetilde}
\numberwithin{equation}{subsection}
\newcommand{\ALCH}{(\hyperlink{ALCH}{ALCH})~}
\newcommand{\ALS}{(\hyperlink{ALS}{ALS})~}
\title[Asymptotic strictly pseudoconvex CR structure for ALCH manifolds]{Asymptotic strictly pseudoconvex CR structure for asymptotically locally complex hyperbolic manifolds}
\author{Alan Pinoy}
\begin{document}

\begin{abstract}
In this paper, we build a compactification by a strictly pseudoconvex CR structure for a complete and non-compact K\"ahler manifold whose curvature tensor is asymptotic to that of the complex hyperbolic space.
To do so, we study in depth the evolution of various geometric objects that are defined on the leaves of some foliation of the complement of a suitable convex subset, called an \textit{essential subset}, whose leaves are the equidistant hypersurfaces above this latter subset.
With a suitable renormalization which is closely related to the anisotropic nature of the ambient geometry, the above mentioned geometric objects converge near infinity, inducing the claimed structure on the boundary at infinity.
\end{abstract}

\keywords{Complex hyperbolic space, asymptotic geometry, asymptotically symmetric space, CR structure.}

\subjclass[2020]{53C21, 53C35, 53C55, 58J60}

\maketitle


\section{Introduction}
\label{section:intro}

The study of the asymptotic geometry of complete non-compact Riemannian manifolds have proven to be fruitful in the understanding of the geometry of complex domains, driven by the following remark.
By endowing the interior of a bounded domain with a complete metric, which then sends its boundary to infinity, one can read much information on the geometry of the boundary in the asymptotic development of the metric, see \cite{fefferman_bergman_1974,fefferman_monge-ampere_1976,hirachi_construction_2000}.
The induced geometric structure on the boundary leads to geometric invariants of the domain.
The Bergman metric and the K\"ahler-Einstein metric are examples of such metrics, and have been at the centre of complex geometry for decades.
On the unit ball of $\C^n$, these two latter metrics are equal up to a multiplicative constant.
This example is particularly interesting, and is called the complex hyperbolic space:
it is the unique simply connected and complete K\"ahler manifold with constant holomorphic sectional curvature equal to $-1$.
In that sense, it is the complex counterpart of the real hyperbolic space.
In polar coordinates in an exponential chart, the complex hyperbolic metric is given by the following expression
\begin{equation*}
  \dx r^2 + 4 \sinh^2(r)\theta\otimes \theta + 4\sinh\left(\frac{r}{2}\right) \gamma,
\end{equation*}
where $\theta$ is the standard contact form of the unit sphere of $\C^n$, and $\gamma = \dx \theta (\cdot,i\cdot)$ is the Levi form induced on the contact distribution $\ker \theta$.
This form of the metric reveals the geometric structure of the odd dimensional sphere, which is a strictly pseudoconvex Cauchy-Riemann (CR) manifold.
We say that the CR sphere is the sphere at infinity of the complex hyperbolic sphere.
This example is a particular case of the more general non-compact rank one symmetric spaces, namely the real hyperbolic spaces, the complex hyperbolic spaces, the quaternionic hyperbolic spaces and the octonionic hyperbolic plane.
They all admit a sphere at infinity, which is endowed with a particular geometric structure closely related to the Riemannian metric of these spaces; they are called conformal infinities.
Asymptotically hyperbolic Einstein metrics with prescribed conformal infinity have been built by C. R. \textsc{\small Graham} and J. M. \textsc{\small Lee} \cite{graham_einstein_1991,lee_fredholm_2006}.
The general case of asymptotically symmetric metrics have been covered by O. \textsc{\small Biquard} \cite{biquard_metriques_2000}.
Similar results, yet unpublished, has been obtained simultaneously by J. \textsc{\small Roth} in his Ph.D thesis for the complex hyperbolic case \cite{roth_perturbation_1999}.

In a series of papers \cite{bahuaud_intrinsic_2008,bahuaud_conformal_2011,bahuaud_holder_2008,gicquaud_conformal_2013}, E. \textsc{\small Bahuaud}, R. \textsc{\small Gicquaud} and T. \textsc{\small Marsh} have studied a converse problem in the real hyperbolic setting.
They give therein intrinsic geometric conditions under which a complete non-compact Riemannian manifold whose sectional curvature decays sufficiently fast to $-1$ near infinity have a conformal boundary at infinity, modelled on that of the hyperbolic space.
This generalizes a work of M. \textsc{\small Anderson} and R. \textsc{\small Schoen} \cite{anderson_positive_1985}.
These geometric conditions are the existence of a convex core, called \textit{essential subset}, and the convergence near infinity of the sectional curvature to $-1$, and of some covariant derivatives of the curvature tensor to $0$, with exponential decays.

By trying to determine which complete K\"ahler manifolds arise as bounded submanifolds of some Hermitian space $\C^N$, J. \textsc{\small Bland} has studied an analogous problem in the K\"ahler case, and has built a compactification by a CR structure for some complete non-compact manifolds whose curvature tensor is asymptotic to that of the complex hyperbolic space in some particular coordinates \cite{bland_existence_1985,bland_bounded_1989}.
Nonetheless, his assumptions appear to be rather restrictive, and not completely geometric: roughly, it is asked that the curvature tensor, with a suitable renormalization, extends up to the boundary in some already compactified coordinates, with high regularity.
They imply, in particular, the \textit{a posteriori} estimates $R = R^0 + \mathcal{O}(e^{-3r})$ and $\nabla R = \mathcal{O}(e^{-4r})$, where $R^0$ is the constant $-1$ holomorphic sectional curvature tensor, and $r$ is the distance function to some compact subset.
However, O. \textsc{\small Biquard} and M. \textsc{\small Herzlich} have shown in \cite{biquard_burns-epstein_2005} that, in real dimension $4$, the curvature tensor $R$ of an asymptotically complex hyperbolic Einstein metric has the following asymptotic development
\begin{equation*}
  R = R^0 + C e^{-2r} + \mathcal{O}(e^{-2r}),
\end{equation*}
where $C$ turns out to be a (non-zero) multiple of the Cartan tensor of the CR structure at infinity.
Since the Cartan tensor of a CR structure vanishes exactly when the CR structure is spherical (that is, if it is locally CR diffeomorphic to the unit sphere), it seems that J. \textsc{\small Bland}'s results only hold for a few examples of asymptotically complex hyperbolic metrics, at least in real dimension $4$ and in the Einstein setting.

More recently, it has been shown by F. \textsc{\small Bracci}, H. \textsc{\small Gaussier} and A. \textsc{\small Zimmer} that a convex domain in $\C^n$ with boundary of class at least $\mathcal{C}^{2,\alpha}$ ($\alpha >0$) is strictly pseudoconvex if and only if one can endow this domain with a complete K\"ahler metric whose holomorphic sectional curvature has range in $[-1-\varepsilon,-1+\varepsilon]$ near the boundary \cite{bracci_geometry_2018,zimmer_gap_2018}.
Here, the constant $\varepsilon$ only depends on the dimension and the regularity of the boundary.
In this case, the holomorphic sectional curvature has the form $-1 + \mathcal{O}(e^{-ar})$ for some $a>0$, where $r$ is the distance function from a compact subset.
Let us mention that this result has been proven to be false if the regularity of the boundary is only $\mathcal{C}^{2}$ \cite{fornaess_non-strictly_2018}.

Inspired by the work of E. \textsc{\small Bahuaud}, R. \textsc{\small Gicquaud} and T. \textsc{\small Marsh}, we give in this paper a geometric characterization of complete non-compact K\"ahler manifolds admitting a compactification by a strictly pseudoconvex CR structure.
The study is more intricate in the complex hyperbolic setting than in the real hyperbolic one, due to the anisotropic nature of complex hyperbolic geometry.
In contrast with J. \textsc{\small Bland}'s results, our study only relies on purely geometric assumptions, and requires a control of the curvature and its first covariant derivative to an order strictly less than $\mathcal{O}(e^{-2r})$.
We consider a complete non-compact K\"ahler manifold $(M,g,J)$ whose Riemann curvature tensor $R$ is asymptotic to $R^0$, the curvature tensor of constant holomorphic sectional curvature $-1$.
We first prove that if $R$ is exponentially close to $R^0$ near infinity, then the metric tensor has an asymptotic development at infinity that is similar to that of the complex hyperbolic space.

\begin{thmxx} \label{thmxx:A}
Let $(M,g,J)$ be a complete non-compact K\"ahler manifold with an essential subset $K$.
Assume that the sectional curvature of $\bar{M\setminus K}$ is negative and that there exists $a>1$ such that $\|R-R^0\|_g = \mathcal{O}(e^{-ar})$.
Then $\dK$ is endowed with a nowhere vanishing continuous differential 1-form $\eta$ and a continuous field of symmetric positive semi-definite bilinear forms $\gamma_H$, positive definite on the distribution $H=\ker\eta$, such that the metric reads in an exponential chart
\begin{equation*}
\dx r^2 + e^{2r} \eta \otimes \eta + e^r \gamma_H + \text{lower order terms}.
\end{equation*}
\end{thmxx}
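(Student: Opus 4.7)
The plan is to extend the shape-operator strategy of Bahuaud, Gicquaud and Marsh from the real hyperbolic setting to the anisotropic complex hyperbolic one. The essential subset $K$ together with the negative sectional curvature on $\bar{M\setminus K}$ should provide a global diffeomorphism $\dK \times [0,\infty) \to M\setminus K$ via the normal exponential map, in which $\delr$ is the unit outward normal to the level sets of the distance function $r$. The metric then takes the form $\dx r^2 + g_r$ for a family of metrics $g_r$ on $\dK$, whose evolution is governed by the shape operator $S = \nabla \delr$ of the hypersurfaces $\{r = \mathrm{const}\}$ through $\partial_r g_r(X,Y) = 2 g_r(SX,Y)$ for fields $X,Y$ parallel-transported along the radial geodesics.

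The core of the proof is the analysis of $S$ via the Riccati equation
\[
\Ndr S + S^2 = -R(\cdot,\delr)\delr.
\]
For the complex hyperbolic model, the analogous operator $S^0$ is diagonal in the splitting $\delr^\perp = \R\Jdelr \oplus H$, with eigenvalue $\coth(r) \to 1$ on $\Jdelr$ and $\tfrac12\coth(r/2) \to \tfrac12$ on the contact distribution $H$, which reflects precisely the two weights $e^{2r}$ and $e^r$ in the target asymptotic metric. Setting $A = S - S^0$, the difference of Riccati equations gives the linear inhomogeneous ODE
\[
\Ndr A + SA + AS^0 = -(R - R^0),
\]
whose source is controlled by the hypothesis $\|R-R^0\|_g = \mathcal{O}(e^{-ar})$ with $a > 1$. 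A Gr\"onwall-type estimate, using that the eigenvalues of $S^0$ are uniformly bounded below by $\tfrac12$, should yield $\|A\|_g = \mathcal{O}(e^{-br})$ for some $b > 0$. Once $S$ is controlled in this way, integrating the evolution equation for $g_r$ along the radial geodesics produces the announced asymptotic metric: $\eta$ is identified with the rescaled $g$-dual of $\Jdelr$ pulled back to $\dK$, and $\gamma_H$ with the rescaled restriction of $g$ to $H$.

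The main obstacle is precisely the anisotropy. Unlike in the real hyperbolic case where $S^0 = \coth(r)\Id$, here $S^0$ has two distinct asymptotic eigenvalues, so the off-diagonal blocks of $A$ with respect to the splitting $\R\Jdelr \oplus H$ mix the two weights $e^{2r}$ and $e^r$ and are damped at an intermediate rate. Ensuring that these blocks decay fast enough for the limiting splitting of $T\dK$ into $\R\Jdelr \oplus H$ to be well-defined is the delicate technical point, and it governs the regularity of both $\eta$ and $\gamma_H$. The K\"ahler identity $\nabla J = 0$ is essential here: it controls how $J$ interacts with the parallel transport of $\delr$ along radial geodesics, and guarantees that the decomposition persists as a continuous geometric structure on $\dK$, producing the nowhere vanishing 1-form $\eta$ and the positive semi-definite tensor $\gamma_H$ of the statement.
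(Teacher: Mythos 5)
Your overall strategy --- comparing $S$ to the model shape operator $S^0$ via the difference of Riccati equations and then integrating $\partial_r g_r = 2g_r(S\cdot,\cdot)$ --- is a legitimate alternative to the paper's route (the paper works instead with the second-order Jacobi equation for the frame components of $Y_v$, weighted by $e^r$ and $e^{r/2}$). However, as written the proposal has two concrete gaps. First, the decay rate you extract for $A = S - S^0$ is exactly insufficient. Gr\"onwall applied to $\Ndr A + SA + AS^0 = -(R_{\delr}-R^0_{\delr})$ using only $\lambda_{\min}(S^0)\geqslant\tfrac12$ and $S\geqslant 0$ gives $\|A\|_g = \mathcal{O}(e^{-r/2})$, and this is the borderline case where the argument fails: the $H$-block of the renormalized Jacobi field then satisfies $\big(e^{-r/2}\|Y_v^{H}\|_g\big)' = \mathcal{O}\big(e^{-r/2}\|A\|_g\|Y_v\|_g\big) = \mathcal{O}(1)$, which is not integrable, so $e^{-r}g_r$ restricted to $H$ need not converge and the term $e^r\gamma_H$ cannot be isolated from the error. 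To get $\|A\|_g = \mathcal{O}(e^{-br})$ with $b>\tfrac12$ you must also bound $\lambda_{\min}(S)$ from below away from $0$ asymptotically, which requires invoking the \emph{upper} curvature bound $\sec\leqslant -\tfrac14 + C_0e^{-ar}$ coming from the pinching of $R^0$ and a second Riccati comparison --- an ingredient absent from your outline. (There is also the minor issue that your $S^0$, with eigenvalues $\coth r$ and $\tfrac12\coth(r/2)$, blows up at $r=0$ while $S$ is merely non-negative there; one should shift the argument of $\coth$ or use the stationary model $\Id_{\R\Jdelr}+\tfrac12\Id_H$.)

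Second, and more importantly, you do not address why $\eta$ is nowhere vanishing and why $\gamma_H$ is positive definite on $\ker\eta$, which is half the content of the statement: your construction only produces $\eta$ and $\gamma_H$ as limits, and nothing in the outline prevents the limiting bilinear form $e^{2r}\eta\otimes\eta + e^r\gamma_H$ from degenerating in some direction (i.e.\ some nonzero $v$ with $\eta(v)=0$ and $\gamma_H(v,v)=0$, so that $\|Y_v\|_g = o(e^{r/2})$). The paper excludes this by a global volume argument: a lower bound $\lambda(r,p)\geqslant \Lambda_- e^{(n+\frac12-\varepsilon)r}$ on the volume density, obtained from the $-\tfrac14$ upper curvature bound via a comparison with $\R H^{m+1}(-\kappa^2)$, is played against a Hadamard-inequality upper bound on $\lambda$ built from the individual Jacobi-field estimates; a degenerate direction would force the volume to grow too slowly. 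A pointwise substitute is possible in your framework (a lower bound $\lambda_{\min}(S)\geqslant\kappa\tanh(\kappa(r-r_0))$ yields $\|Y_v\|_g^2\geqslant C_\delta e^{(1-\delta)r}\|v\|_g^2$ for every $\delta>0$, which contradicts the upper bound $\|Y_v\|_g^2\leqslant Ce^{2(1-b)r}$ available in a degenerate direction once $b>\tfrac12$), but some such argument must be supplied; without it the theorem is not proved.
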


See section \ref{section:thmA} for a precise statement.
In view of the computations, and due to the characterization of strictly pseudoconvex domains by F. \textsc{\small Bracci}, H. \textsc{\small Gaussier} and A. \textsc{\small Zimmer}, the decay rate of the curvature is a reasonable assumption.
Such a K\"ahler manifold will be referred to as an asymptotically locally complex hyperbolic manifold.
The local condition on the curvature tensor is thus a sufficient condition to recover a development of the metric similar to that of the model space.
The differential form $\eta$ is called the canonical 1-form at infinity, and the field of symmetric bilinear forms $\gamma_H$ is called the Carnot-Carathéodory metric at infinity.
Under the extra condition that the metric is asymptotically locally symmetric, we can moreover prove the following.

\begin{thmxx}\label{thmxx:B}
Let $(M,g,J)$ be a complete non-compact K\"ahler manifold satisfying the assumptions of Theorem \ref{thmxx:A}.
Assume furthermore that there exists $b >1$ such that $\|\nabla R\|_g = \mathcal{O}(e^{-br})$.
Then the canonical 1-form at infinity $\eta$ is a contact form of class $\mathcal{C}^1$.
\end{thmxx}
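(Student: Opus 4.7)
The plan is to combine the asymptotic expansion of Theorem~\ref{thmxx:A} with the closedness of the K\"ahler form $\omega = g(J\cdot,\cdot)$. The reinforced curvature hypothesis $\|\nabla R\|_g = \mathcal{O}(e^{-br})$, $b>1$, is used to upgrade the $\mathcal{C}^0$ regularity of $\eta$ provided by Theorem~\ref{thmxx:A} to $\mathcal{C}^1$, which is needed even to make sense of $\dx\eta$. Once this regularity is secured, the identity $d\omega = 0$ translates at infinity into a relation $\dx\eta|_H = \gamma_H(J\cdot,\cdot)|_H$, which is non-degenerate on $H$ by the positive definiteness of $\gamma_H$ and the K\"ahler compatibility of $J$.

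In the exponential chart, decompose $\omega = \dx r\wedge\alpha_r + \omega_r^\Sigma$, where $\alpha_r = \iota_{\delr}\omega = g(\Jdelr,\cdot)$ is tangential to the geodesic sphere $\Sigma_r = \{r = \text{const.}\}$ and $\omega_r^\Sigma$ is the restriction of $\omega$ to $\Sigma_r$. The identity $d\omega = 0$ decouples into $\dx_\Sigma\omega_r^\Sigma = 0$ and
\begin{equation*}
\Ldr\omega_r^\Sigma = \dx_\Sigma\alpha_r,
\end{equation*}
where $\dx_\Sigma$ denotes the exterior derivative along level sets. Theorem~\ref{thmxx:A} readily yields the $\mathcal{C}^0$-asymptotics $e^{-r}\alpha_r\to\eta$ and $e^{-r}\omega_r^\Sigma\to\gamma_H(J\cdot,\cdot)$ on $\dK$ via the exponential chart.

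The core technical step is the regularity upgrade. Picking a frame on $\dK$ and parallel-transporting it along the radial geodesics, the components of $\alpha_r$ and $\omega_r^\Sigma$ satisfy Jacobi-type ODEs whose coefficients involve $R$. Their tangential derivatives along $\Sigma_r$ then satisfy an analogous ODE with an extra inhomogeneous term involving $\nabla R$; with $\|\nabla R\|_g = \mathcal{O}(e^{-br})$ and $b>1$, a Gr\"onwall-type estimate yields the uniform boundedness and equicontinuity of these tangential derivatives. Consequently, $e^{-r}\alpha_r$ and $e^{-r}\omega_r^\Sigma$ converge in the $\mathcal{C}^1$ topology, and $\eta$, $\gamma_H$ inherit $\mathcal{C}^1$ regularity.

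Multiplying the identity $\Ldr\omega_r^\Sigma = \dx_\Sigma\alpha_r$ by $e^{-r}$ and letting $r\to\infty$ gives, after restriction to $H\times H$,
\begin{equation*}
\dx\eta|_H = \gamma_H(J\cdot,\cdot)|_H.
\end{equation*}
The K\"ahler identity $g(J\cdot,J\cdot) = g$ passes to the limit as $\gamma_H(J\cdot,J\cdot) = \gamma_H$ on $H$, so the right-hand side is a non-degenerate 2-form on the $2(n-1)$-dimensional distribution $H$. Hence $\eta\wedge(\dx\eta)^{n-1}$ is a nowhere-vanishing volume form on the $(2n-1)$-dimensional $\dK$, proving that $\eta$ is a contact form. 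The main obstacle is the regularity step: one must propagate tangential derivatives through the Jacobi equations along radial geodesics carefully, and the hypothesis $b>1$ is precisely what ensures that the relevant integrals over $r\in[0,\infty)$ are convergent.
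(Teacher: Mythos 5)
Your route to the contact property via closedness of the K\"ahler form is a genuinely different (and rather elegant) packaging of what the paper does by hand: the paper computes $\dx \eta_r(u,v) = e^{-r}\bigl(g(Y_v,JSY_u)-g(Y_u,JSY_v)\bigr)$ directly from the Lie bracket and the shape operator, then passes to the limit in an adapted coframe to get $\dx\eta = \sum_k \eta^{2k-1}\wedge\eta^{2k}$, whence $\eta\wedge(\dx\eta)^n = n!\,\eta\wedge\eta^1\wedge\cdots\wedge\eta^{2n}\neq 0$. Your identity $\Ldr\omega_r^{\Sigma} = \dx_\Sigma \alpha_r$ is correct and, after renormalization, recovers the same limit; note though that your non-degeneracy argument invokes ``$\gamma_H(J\cdot,\cdot)$'' with the ambient $J$, which does not preserve $H$ --- the paper deliberately avoids introducing a complex structure on $H$ at this stage (that is the content of Theorem D, under stronger decay) and instead reads off non-degeneracy from the coframe $\{\eta,\eta^1,\ldots,\eta^{2n}\}$. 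Also, your dimension count ($\dim H = 2(n-1)$, $\eta\wedge(\dx\eta)^{n-1}$) does not match the paper's conventions, where $\dim M = 2n+2$ and the contact condition is $\eta\wedge(\dx\eta)^n\neq 0$.

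The genuine gap is in the regularity step, which is the actual content of the theorem. You assert that the tangential derivatives of the components of $\alpha_r$ ``satisfy an analogous ODE with an extra inhomogeneous term involving $\nabla R$'' and that ``a Gr\"onwall-type estimate'' then gives boundedness and equicontinuity. This is precisely what fails if done naively: differentiating the Jacobi system tangentially produces terms like $R(\delr,\nabla_{Y_v}Y_u)\delr$, i.e.\ the curvature applied to the unknown derivative quantity itself, and the best a priori Gr\"onwall bound on $\|\nabla_{Y_v}Y_u\|_g$ is of order $e^{2r}$ (Proposition \ref{prop:bound_nablaY_vY_u}); multiplying $(\mathcal{L}_u\eta_r)(v) = e^{-r}\bigl(g(\nabla_{Y_v}Y_u,\Jdelr)+g(Y_v,\nabla_{Y_u}\Jdelr)\bigr)$ by $e^{-r}$ then still leaves an a priori growth of order $e^{r}$, which diverges. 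What saves the paper's proof is a structural miracle: the specific scalar $f = g(\nabla_{Y_v}Y_u,\Jdelr)+g(Y_v,\nabla_{Y_u}\Jdelr)$ satisfies the \emph{closed} second-order equation $\delr\delr f - f = h$, because the four $R^0$-curvature terms appearing in $\delr\delr f$ recombine exactly into $f$ (Lemma \ref{appendix:thmB1}, the identity $\mathbf{k}^0=f$); only then is the source $h$ controlled by $\|R-R^0\|_g$, $\|\nabla R\|_g$ and the a priori $e^{2r}$ bounds, giving $|h|\lesssim e^{(2-\min\{a,b\})r}$ and hence convergence of $e^{-r}f$ when $\min\{a,b\}>1$. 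Your sketch neither establishes the a priori bounds on $\|(\nabla_{Y_v}S)Y_u\|_g$ and $\|\nabla_{Y_v}Y_u\|_g$ (which is where the ALS hypothesis first enters, via Rademacher plus Gr\"onwall along radial geodesics) nor identifies the cancellation that closes the ODE, so as written the regularity upgrade does not go through.
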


Under higher exponential decays, the Carnot-Carathéodory metric also gains one order of regularity.

\begin{thmxx}\label{thmxx:C}
Let $(M,g,J)$ be a complete non-compact K\"ahler manifold satisfying the assumptions of Theorem \ref{thmxx:B}.
Assume furthermore that $\min\{a,b\}>\frac{3}{2}$.
Then the Carnot-Carathéodory metric at infinity $\gamma_H$ is of class $\mathcal{C}^1$.
\end{thmxx}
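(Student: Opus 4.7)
The strategy is to differentiate the fundamental transport equation one more time in a direction tangent to $\dK$, mirroring the proof of Theorem~\ref{thmxx:B}. The threshold $\min\{a,b\}>3/2$ will emerge as exactly the condition making the resulting linearized Riccati equation solvable by a decaying solution: the horizontal anisotropy $\|X\|_g\sim e^{r/2}$ for a Carnot-Carath\'eodory-unit $X$ accounts for a $1/2$-gap, while inversion of the transport operator $\Ndr+I$ by backward integration from infinity costs one additional unit of decay.

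In the normal exponential chart $\phi:\dK\times[0,\infty)\to M\setminus K$, let $g(r):=\phi_r^*g$, let $g_H(r)$ denote its horizontal component, and set $A(r):=e^{-r}g_H(r)$. By Theorem~\ref{thmxx:A}, $A(r)\to\gamma_H$ pointwise, the convergence being governed by the Riccati equation
\begin{equation*}
  \Ndr S+S^2=-R(\cdot,\delr)\delr
\end{equation*}
for the shape operator $S$ of the level hypersurfaces. Writing $\sigma:=S-S^0$ for the perturbation of the model shape operator, the horizontal linearization reads $\Ndr\sigma_{HH}+\sigma_{HH}=-(R-R^0)_{HH}+\mathcal{O}(\|\sigma\|^2)$, with source $\mathcal{O}(e^{-ar})$. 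Given a vector field $X$ on $\dK$ extended along $\dK\times[0,\infty)$ by $[\delr,X]=0$, differentiating once more along $X$ yields a linear transport equation for $\nabla_X\sigma_{HH}$ sourced by $\nabla_X(R-R^0)$ plus a quadratic remainder, and a parallel differentiation produces a transport equation for $\nabla_X A(r)$ whose source involves $\nabla_X\sigma_{HH}$.

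The key anisotropic estimate is the following: for horizontal $X$ of unit Carnot-Carath\'eodory size, $\|X\|_g\sim e^{r/2}$, whence
\begin{equation*}
  \|\nabla_X(R-R^0)\|_g=\mathcal{O}(e^{(1/2-\min\{a,b\})r}).
\end{equation*}
Inverting $\Ndr+I$ by backward integration from $r=\infty$ yields $\|\nabla_X\sigma_{HH}\|_g=\mathcal{O}(e^{(1/2-\min\{a,b\})r})$, \emph{provided the backward integral converges}, which requires $\min\{a,b\}>3/2$. Converted to the Carnot-Carath\'eodory scale, $\nabla_X A(r)$ then converges as $r\to\infty$ to a continuous limit, uniformly in $X$, proving the $\mathcal{C}^1$ regularity of $\gamma_H$. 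The Reeb direction incurs no additional obstruction, since $\gamma_H(T,\cdot)=0$ and the relevant tangential regularity along $T=\Jdelr|_{\dK}$ is already supplied by the $\mathcal{C}^1$ smoothness of $\eta$ obtained in Theorem~\ref{thmxx:B}. The chief technical obstacle is the anisotropic bookkeeping: each block (horizontal-horizontal, horizontal-Reeb, Reeb-Reeb) of the shape-operator perturbation and its tangential derivative must be analyzed separately, and the worst-case balance between the $e^{r/2}$ rescaling and the backward-integration cost is precisely the $3/2$ threshold above.
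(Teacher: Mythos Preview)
Your approach is in the right spirit but misses the central computation. When you differentiate the Riccati equation along $X$, the commutator $[\Ndr,\nabla_X]$ produces terms $R(\delr,X)\cdot\sigma$ with the \emph{full} curvature $R$, not $R-R^0$. The $R^0$ contribution is not a perturbation; it is of principal order and couples back to the unknown. In the paper this is handled by passing to a second-order scalar equation: setting $f^j=g(\nabla_{Y_v}Y_u,E_j)+g(\nabla_{Y_u}E_j,Y_v)$ (which equals $e^{r/2}(\mathcal{L}_u\eta^j_r)(v)$), an explicit $R^0$ calculation (Lemma~\ref{appendix:B2}) shows that the four $R^0$ terms arising from $\delr\delr f^j$ combine \emph{exactly} to $\tfrac14 f^j$, yielding the ODE $\delr\delr f^j-\tfrac14 f^j=h^j$ with source $h^j$ built only from $\nabla R$ and $R-R^0$. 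Your first-order transport picture $(\Ndr+I)$ does not see this structure, and without the algebraic identity you cannot isolate a genuinely small source.

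Your threshold accounting is also off. The paper does not use the anisotropic bound $\|X\|_g\sim e^{r/2}$; it uses the generic Jacobi estimate $\|Y_u\|_g,\|Y_v\|_g\leqslant ce^r$ together with $\|\nabla_{Y_v}Y_u\|_g\leqslant ce^{2r}$ (Proposition~\ref{prop:bound_nablaY_vY_u}) and $\|\nabla_{Y_u}E_j\|_g\leqslant ce^r$ (Lemma~\ref{lem:bound_nablaY_uX}), giving $|h^j|\leqslant Ce^{(2-\min\{a,b\})r}$. Convergence of $e^{-r/2}f^j$ then requires $2-\min\{a,b\}<\tfrac12$, i.e.\ $\min\{a,b\}>\tfrac32$: the threshold comes from the homogeneous exponent $\tfrac12$ of the second-order ODE, not from backward integrability of a first-order operator nor from restricting to horizontal $X$. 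Finally, your Reeb-direction dismissal is a non-sequitur: $\gamma_H(\xi,\cdot)=0$ says nothing about $\mathcal{L}_\xi\gamma_H$, and the $\mathcal{C}^1$ regularity of $\eta$ from Theorem~\ref{thmxx:B} does not control tangential derivatives of $\gamma_H$. The paper's argument is uniform in the direction $u$ of differentiation and requires no such splitting.
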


We then show that under the assumptions of Theorem \ref{thmxx:C}, the contact distribution $H$ is endowed with a canonical integrable almost complex structure $J_H$ having the same properties as that of the model space.

\begin{thmxx}\label{thmxx:D}
Let $(M,g,J)$ be a complete non-compact K\"ahler manifold satisfying the assumptions of Theorem \ref{thmxx:C}.
Then the contact distribution at infinity $H=\ker \eta$ is endowed with an integrable almost complex structure $J_H$ at infinity of class $\mathcal{C}^1$ such that the Carnot-Carathéodory metric at infinity is given by $\gamma_H = \dx \eta(\cdot,J_H\cdot)$.
In particular, $(\dK, H,J_H)$ is a strictly pseudoconvex CR manifold of class $\mathcal{C}^1$.
\end{thmxx}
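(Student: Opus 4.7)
The natural candidate for $J_H$ arises by restricting the ambient Kähler structure to the asymptotic horizontal distribution. In the normal exponential chart $\Phi \colon [0,\infty)\times \dK \to \bar{M\setminus K}$, the Kähler condition $\nabla J = 0$ implies that $J\delr$ is parallel along each radial geodesic, and Theorem A identifies the pullback of $e^{-r} J\delr$ to $\dK$ as a non-zero multiple of the Reeb field $T$ dual to $\eta$, in the limit $r \to \infty$. Given $v \in H_x$, let $V(r)$ be the Jacobi field along $r \mapsto \Phi(r,x)$ obtained as the pushforward of $v$ by $\Phi$; by Theorem A, $\|V(r)\|_g \sim e^{r/2}\|v\|_{\gamma_H}$. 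Applying $J$ preserves lengths and orthogonality to $\delr$, and asymptotically preserves orthogonality to $J\delr$. Pulling $e^{-r/2} JV(r)$ back to $T_x\dK$ and projecting onto $H_x$ along the splitting $T_x\dK = \mathbb{R}T_x \oplus H_x$, the limit as $r \to \infty$ defines $J_H v \in H_x$.

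The identity $J_H^2 = -\Id$ follows from $J^2 = -\Id$ and linearity of the limit. For the compatibility $\gamma_H(\cdot,\cdot) = \dx\eta(\cdot, J_H\cdot)$, I use the Kähler identity $\omega(X,Y) = g(JX, Y)$ for the closed Kähler form $\omega$. From the asymptotic $g(J\delr, X) \sim e^r \eta(X)$ for $X$ tangent to the level sets (a direct consequence of Theorem A) together with $\dx\omega = 0$, one obtains $\omega \sim e^r \dx\eta$ on horizontal pairs. Comparing with $g \sim e^r \gamma_H$ on $H$ and the definition of $J_H$ yields the desired identity. Strict pseudoconvexity then follows from the positive-definiteness of $\gamma_H$ established in Theorem A.

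The integrability of $J_H$ rests on the integrability of the ambient Kähler structure: $N_J = 0$ on $M$. Given $C^1$ sections $X, Y$ of $H$, I extend them to vector fields $\tilde X, \tilde Y$ on the collar which are $r$-independent in the chart $\Phi$ and horizontal at every level. The identity $N_J(\tilde X, \tilde Y) = 0$ then reads
\begin{equation*}
  [J\tilde X, J\tilde Y] - J[J\tilde X, \tilde Y] - J[\tilde X, J\tilde Y] - [\tilde X, \tilde Y] = 0.
\end{equation*}
Projecting onto the contact distribution at level $r$, rescaling by $e^{-r/2}$, and passing to the limit $r \to \infty$ yields the CR-integrability of $J_H$: the $(0,1)$-subbundle of $H\otimes\mathbb{C}$ is involutive modulo $\mathbb{R}T$, which is exactly the CR condition for the corank-one distribution $H$.

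The $C^1$ regularity of $J_H$ is established by an iterative scheme analogous to that of Theorem C, applied to the ODE governing the coefficients of $J$ in a parallel frame adapted to the splitting $\mathbb{R}\delr \oplus \mathbb{R}J\delr \oplus H$; the hypothesis $\min\{a,b\} > 3/2$ ensures that the tangential derivatives remain summable. The main technical difficulty lies in the third step: Lie brackets involve derivatives of the horizontal extensions $\tilde X, \tilde Y$, so passage to the limit in the Nijenhuis identity requires $C^1$ control on the horizontal distribution itself. This is precisely what Theorem C supplies, which together with the independent $C^1$ regularity of $J_H$ established here, validates the limiting argument.
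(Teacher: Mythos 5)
Your overall strategy is the same as the paper's: define $J_H$ as the limit, under the normal exponential map, of the ambient $J$ projected onto $\{\delr,\Jdelr\}^{\perp}$, read off the compatibility $\gamma_H=\dx\eta(\cdot,J_H\cdot)$ from the asymptotics of the metric and of the K\"ahler form, and inherit integrability from $N_J=0$ upstairs. The compatibility step is fine in outline (your route through $\mathcal{L}_{\delr}\omega=\dx\,\iota_{\delr}\omega$ is a legitimate variant of the paper's direct computation of $\dx\eta_r$, and both ultimately rest on the $\mathcal{C}^1$ convergence of $\eta_r$ from Theorem \ref{thm:C1}). But the integrability step, which is the heart of the theorem, has two genuine gaps. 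First, a conceptual one: with the paper's definition of integrability, $[\Gamma(H^{1,0}),\Gamma(H^{1,0})]\subset\Gamma(H^{1,0})$ requires not only that the $H^{0,1}$-part of the bracket vanish but also that its $\xi$-component vanish, i.e.\ that $\dx\eta$ be of type $(1,1)$: $\dx\eta(J_H u,v)=-\dx\eta(u,J_H v)$ (condition (2) of Proposition \ref{prop:integrability_conditions}). ``Involutive modulo $\mathbb{R}T$'' is strictly weaker and is not ``exactly the CR condition.'' This missing condition does follow from your identity $\gamma_H=\dx\eta(\cdot,J_H\cdot)$ together with the symmetry of $\gamma_H$, but you never draw that inference, and the paper has to prove it separately (Proposition \ref{prop:J_H_is_(1,1)}) via the symmetry of $S$ and the estimate $\|Se^{r}\Jdelr-e^{r}\Jdelr\|_g=\mathcal{O}(e^{r/2})$.

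Second, and more seriously, the passage to the limit in the Nijenhuis identity is asserted rather than proved. Your extensions $\tilde X$ cannot be simultaneously $r$-independent in the chart and horizontal at every level, and the fields $J\tilde X$ are in any case not $E$-related to fixed fields on $\dK$: one has $[\delr,JY_u]=JSY_u-SJY_u$, so every bracket you write down produces commutator defects of $S$ with $J$. Concretely, the projected tensor $\phi_r$ is \emph{not} integrable at finite $r$; its Nijenhuis tensor (Lemma \ref{prop:nijenhuis_r}) equals $e^{r}\eta_r(v)(\phi_rS_ru-S_r\phi_ru)-e^{r}\eta_r(u)(\phi_rS_rv-S_r\phi_rv)+\dx\eta_r(u,v)\xi(r)$, and identifying its limit on $H\times H$ as exactly $\dx\eta\otimes\xi$ requires the shape operator asymptotics $\|S_r-\tfrac12(\Id+\eta\otimes\xi)\|_g=\mathcal{O}(e^{-r})$ and the resulting decay $\|S_r\phi_r-\phi_rS_r\|_g=\mathcal{O}(e^{-r})$, balanced against the growth $e^{r}$ and the decay $\eta_r(u)=\mathcal{O}(e^{-3r/2})$ for $u\in H$. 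None of this quantitative content appears in your argument, and without it the claim that the projection errors vanish in the limit is unsupported. Finally, your $\mathcal{C}^1$-regularity discussion is off target: $J$ is parallel, so there is no new ODE for its coefficients in a radially parallel frame; the regularity of $J_H$ comes for free from the local expression $\phi=\sum_k\eta^{2k-1}\otimes\xi_{2k}-\eta^{2k}\otimes\xi_{2k-1}$ and the $\mathcal{C}^1$ convergence of the coframe and its dual frame already supplied by Theorems \ref{thm:C1} and \ref{thm:gamma_H_C1}.
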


\subsection*{Structure of the paper}
\label{subsection:structure}

In section \ref{section:preliminaries}, we detail the notations and the setting.
In section \ref{section:ALCH_manifolds}, we define the \textit{asymptotically locally complex hyperbolic} \ALCH and \textit{asymptotically locally symmetric} \ALS conditions and give a lower bound on the volume growth.
In section \ref{section:normal_jacobi}, we study the asymptotic behaviour of normal Jacobi fields, and then prove Theorem \ref{thmxx:A}.
Section \ref{section:contact_structure} is devoted to the proof of Theorems \ref{thmxx:B} and \ref{thmxx:C}.
Section \ref{section:almost_complex_structure} is finally dedicated to the definition of the almost complex structure at infinity and to the proof of Theorem \ref{thmxx:D}.
Useful curvature computations can be found in the Appendix \ref{appendix:A}.

\subsection*{Acknowledgments}
This research was conducted while the author was pursuing his Ph.D thesis at the Institut Montpelli\'erain Alexander Grothendieck, Universit\'e de Montpellier, France.
The author would like to thank Marc Herzlich and Philippe Castillon for suggesting this problem to him and for their careful guidance.
May Guillaume Ferri\` ere be thanked as well for his early analytical remarks.
This article greatly benefited from the relevant comments of Gilles Carron and Jack Lee, and the author would like to express his gratitude to them.
Finally, the author would like to thank the anonymous referee for their valuable comments that improved the quality and the clarity of the paper.

\section{Preliminaries}
\label{section:preliminaries}
\subsection{Notations}
\label{subsec:notations}
Let $(M^{2n+2},g,J)$ be a complete, non-compact K\"ahler manifold of real dimension $2n+2$, $n\geqslant 1$.
We denote by $\nabla$ its Levi-Civita connection.
We recall that $(M,g,J)$ is K\"ahler if the almost complex structure $J$ is parallel and satisfies $g(JX,JY) = g(X,Y)$ for all tangent vectors $X$ and $Y$.
The Riemann curvature tensor $R$ is defined by
\begin{equation}\label{eq:convention_curv}
R(X,Y)Z = \nabla_{[X,Y]}Z - \big(\nabla_X(\nabla_ZY)-\nabla_Y(\nabla_XZ)\big).
\end{equation}
By abuse of notation, its four times covariant version is still denoted by $R$, that is $R(X,Y,Z,T) = g(R(X,Y)Z,T)$.
Please note that our convention is that of \cite{besse_einstein_2007,gallot_riemannian_2004}, which is opposite to that of \cite{do_carmo_riemannian_1992,lee_introduction_2019}.
In our case, the sectional curvature of a linear plane $P$ with orthonormal basis $\{X,Y\}$ is $\sec (P) = \sec(X,Y)=R(X,Y,X,Y)$ and the holomorphic sectional curvature is given by $R(X,JX,X,JX)$.

Let $K\subset M$ be a compact codimension $0$ submanifold with hypersurface boundary $\dK$ oriented by a unit normal $\nu$.
The associated outward normal exponential map $E\colon \R_+ \times \dK \to M$ is defined by $E(r,p) = \gamma_p(r)$, where $\gamma_p$ is the unit speed geodesic with initial data $\gamma_p(0) = 0$ and $\gamma_p'(0) = \nu(p)$.
Following \cite{bahuaud_holder_2008}, the submanifold $K$ is called an \textit{essential subset} if $\dK$ is convex with respect to $\nu$, meaning that its shape operator is non-negative, and if $E\colon \R_+ \times \dK \to \bar{M\setminus K}$ is a diffeomorphism.
If $K$ is totally convex and if the sectional curvature outside of $K$ is negative, then $K$ is an essential subset (see {\cite[Theorem 3.1]{bahuaud_holder_2008}}).
Moreover, the visual boundary of $(M,g)$ is homeomorphic to the boundary of $K$.

Assume that $K\subset M$ is an essential subset.
The \textit{radial vector field} $\delr$ is the vector field on $\bar{M\setminus K}$ defined by $\delr(\gamma_p(r)) = E(r,p)_*\frac{\dx}{\dx r}= \gamma_p'(r)$.
A tensor defined on $\bar{M\setminus K}$ is said to be \textit{radially parallel} if its covariant derivative in the $\delr$ direction vanishes.
Since $(\gamma_p)_{p\in \dK}$ are geodesics, $\delr$ is radially parallel.
The shape operator $S$ and the Jacobi operator $R_{\delr}$ are the fields of symmetric endomorphisms defined by $SX = \nabla_X\delr$ and $R_{\delr}X = R(\delr,X)\delr$.
They are related by the Riccati equation
\begin{equation}\label{eq:riccati}
\Ldr S = \Ndr S = - S^2 - R_{\delr}.
\end{equation}
They both vanish on $\delr$ and take values in $\{\delr\}^{\perp}$.
From the Riccati equation we derive the following: if $\dK$ is convex and the sectional curvature of $\bar{M\setminus K}$ is non-positive, then the level hypersurfaces above $\dK$ are convex.

If $v \in T_p\dK$ is a tangent vector, the associated \textit{normal Jacobi field} along $\gamma_p$ is defined as $Y_v(r) = E(r,p)_*v$.
If $v$ is a (local) vector field on $\dK$, we write $Y_v(\gamma_p(r))= Y_{v(p)}(r)$, which is still called a normal Jacobi field.
\begin{lemma}
\label{lem:Y_v}
If $v$ is a (local) vector field on $\dK$, then:
\begin{enumerate}
\item The vector fields $Y_v$ and $\delr$ commute,
\item $\Ndr Y_v = SY_v$,
\item $Y_v$ is everywhere orthogonal to $\delr$,
\item The restriction of $Y_v$ along any radial geodesic $\gamma_p$ is a Jacobi field.
\end{enumerate}
\end{lemma}
\begin{proof}
\begin{enumerate}
\item Since $E$ is a diffeomorphism onto its image, the naturality of the Lie bracket yields $[\delr,Y_v] = E_*[\frac{\dx}{\dx r},v] = E_* 0 = 0$.
\item The Levi-Civita connection is torsion-free, hence $SY_v - \Ndr Y_v =[\delr,Y_v]$.
The result follows by the first point.
\item The equation of geodesics $\nabla_{\delr}\delr = 0$ together with the second point give $\delr (g(Y_v,\delr)) = g(SY_v,\delr)$.
This last expression identically vanishes since $S$ has range in $\{\delr\}^{\perp}$.
Hence, the function $g(Y_v,\delr)$ is constant along radial geodesics, and the result follows from the initial condition $g(v,\nu) = 0$.
\item From the second point, it holds that $\Ndr Y_v = SY_v$.
One thus obtains the equality $\Ndr (\Ndr Y_v) =\Ndr (SY_v) = (\Ndr S)Y_v + S \Ndr Y_v$, and the result follows from the Riccati equation.
\end{enumerate}
\end{proof}

Recall that $(M,g,J)$ is assumed to be K\"ahler.
We define the \textit{constant $-1$ holomorphic sectional curvature tensor} $R^0$ by
\begin{equation*}\label{eq:def_R0}
\begin{split}
R^0(X,Y,Z,T) &= \frac{1}{4}\big( g(X,T)g(Y,Z) - g(X,Z)g(Y,T)\\
& \quad + g(X,JT)g(Y,JZ) - g(X,JZ)g(Y,JT)\\
& \quad + 2g(X,JY)g(T,JZ)\big).
\end{split}
\end{equation*}
It is a parallel tensor having the symmetries of a curvature tensor.
In case $(M,g,J)$ is the complex hyperbolic space, then $R=R^0$.
Note that this definition differs from that of \cite[section IX.7]{kobayashi_foundations_1996-1} by its sign: the reason is that we require for $R^0$ to have $-1$ holomorphic sectional curvature.
For any orthonormal pair of vectors fields $\{X,Y\}$, $R^0(X,Y,X,Y) = -\frac{1}{4}(1 + 3g(JX,Y)^2)$, from which is deduced the fundamental pinching
\begin{equation*}\label{eq:R0_pinched}
-1 \leqslant R^0(X,Y,X,Y) \leqslant - \frac{1}{4}.
\end{equation*}
If $P = \Span\{X,Y\}$, the lower bound is achieved exactly when $P = JP$ (we say that $P$ is a \textit{complex line}) while the upper bound is achieved exactly when $P\perp JP$ (we say that $P$ is a \textit{totally real plane}).

\subsection{Contact and CR manifolds}

\label{section:CR}
A \emph{contact form} on an $(2n+1)$-dimensional manifold $M$ is a differential form of degree one $\alpha$ such that $\alpha\wedge \dx \alpha^n \neq 0$ everywhere.
The associated contact structure is $H=\ker \alpha$.
We say that $(M,H)$ is a \emph{contact manifold}, and that $\alpha$ calibrates $H$.
Note that for $H$ fixed, $\alpha$ is not unique.
The \emph{Reeb vector field} of $\alpha$ is the unique vector field $X_{\alpha}$ such that $\alpha(X_{\alpha}) = 1$ and $\dx\alpha(X_{\alpha},\cdot)=0$.

An \emph{almost complex structure} $J$ on $(M,H)$ is a section of the bundle $\End (H)$ such that $J^2 = -\Id_H$.
The complexified bundle $H\otimes \C$ splits as $H \otimes \C = H^{1,0}\oplus H^{0,1}$ into the eigenspaces of the complex linear extension of $J$, where $H^{1,0} = \{X-iJX \mid X \in H\}$ and $H^{0,1} = \{X+iJX \mid X \in H\}$.
The almost complex structure $J$ is said to be \emph{integrable} if the sections of $H^{1,0}$ form a linear subalgebra of the sections of $TM\otimes \C$, that is if $[X,Y] \in \Gamma(H^{1,0})$ whenever $X,Y\in \Gamma(H^{1,0})$.

A \textit{CR manifold} $(M,H,J)$ is a contact manifold $(M,H)$ endowed with an integrable almost complex structure on $H$.
When $H = \ker \alpha$ and $\dx \alpha (\cdot,J\cdot)$ is positive definite on $H$, $(M,\alpha,J)$ is called a \textit{strictly pseudoconvex CR manifold}.
CR geometry is the natural geometry of real hypersurfaces in complex manifolds.
The toy model is $\S$, the unit sphere of $\C^n$, with contact distribution $H = T\S \cap (iT\S)$ and with almost complex structure $J$ given by the multiplication by $i$ in the fibres.
Endowed with its natural contact form, it is a strictly pseudoconvex CR manifold.

\subsection{Some analysis results}

Throughout the paper, we will use the following two theorems from real analysis several times.

\begin{thm*}[\textsc{\small Gr\"onwall}'s inequality]\label{thm:Gronwall}
Let $I=[t_0,T) \subset \R$ be an interval with $T\in (t_0,+\infty]$ and let $\varphi,\alpha$ and $\beta \colon I \to \R$ be respectively continuous, non-decreasing, and non-negative continuous functions.
Assume that
\begin{equation}\label{eq:ref_gronwall_type}
\forall t \in I,\quad \varphi(t) \leqslant \alpha(t) + \int_{t_0}^t \beta(s)\varphi(s) \dx s.
\end{equation}
Then it holds that
\begin{equation*}
\forall t \in I,\quad \varphi(t) \leqslant \alpha(t) \exp\left(\int_{t_0}^t \beta(s)\dx s\right).
\end{equation*}
\end{thm*}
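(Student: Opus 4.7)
The plan is to reduce the integral inequality to a differential inequality by introducing the auxiliary function $\psi(t) = \int_{t_0}^t \beta(s)\varphi(s)\dx s$ and then to use the integrating factor $e^{-B}$, where $B(t) = \int_{t_0}^t \beta(s)\dx s$. First, I would observe that $\psi$ is of class $\mathcal{C}^1$ with $\psi'(t) = \beta(t)\varphi(t)$, and substituting the hypothesis gives the differential inequality $\psi'(t) \leqslant \beta(t)\alpha(t) + \beta(t)\psi(t)$.

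Next, I would multiply by the integrating factor $e^{-B(t)}$ to obtain
\begin{equation*}
  \bigl(e^{-B(t)}\psi(t)\bigr)' = e^{-B(t)}\bigl(\psi'(t) - \beta(t)\psi(t)\bigr) \leqslant e^{-B(t)}\beta(t)\alpha(t).
\end{equation*}
Integrating from $t_0$ to $t$, and using $\psi(t_0) = 0$, yields
\begin{equation*}
  e^{-B(t)}\psi(t) \leqslant \int_{t_0}^t e^{-B(s)}\beta(s)\alpha(s)\dx s.
\end{equation*}
Here I would invoke the monotonicity of $\alpha$ to bound $\alpha(s)\leqslant \alpha(t)$ on $[t_0,t]$, pull $\alpha(t)$ out of the integral, and then recognise the remaining integrand as $-\frac{\dx}{\dx s}e^{-B(s)}$, so that the integral evaluates to $1 - e^{-B(t)}$. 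This produces $\psi(t) \leqslant \alpha(t)\bigl(e^{B(t)}-1\bigr)$, and plugging this back into the original inequality $\varphi(t)\leqslant \alpha(t) + \psi(t)$ gives the claimed estimate.

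There is really no serious obstacle here; the only minor point of care is the fact that monotonicity of $\alpha$ is used crucially in the last step, and without it one only gets the weaker pointwise bound $\varphi(t) \leqslant \alpha(t) + \int_{t_0}^t \alpha(s)\beta(s)\exp\bigl(\int_s^t \beta\bigr)\dx s$. A completely alternative route would be to compare $\varphi$ directly with the solution of the associated linear ODE $u' = \beta(u+\alpha)$, $u(t_0)=0$, via a maximum principle argument, but the integrating factor route is shorter and more self-contained.
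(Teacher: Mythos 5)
Your proof is correct. The paper itself states Gr\"onwall's inequality as a classical fact from real analysis and gives no proof, so there is nothing to compare against; your integrating-factor argument (setting $\psi(t)=\int_{t_0}^t\beta\varphi$, deriving $\psi'\leqslant \beta\alpha+\beta\psi$, multiplying by $e^{-B}$, and using the monotonicity of $\alpha$ together with the non-negativity of the weight $\beta e^{-B}$ to pull $\alpha(t)$ out of the integral) is the standard one and is complete; note only that the step $\int_{t_0}^t e^{-B(s)}\beta(s)\alpha(s)\dx s\leqslant \alpha(t)\int_{t_0}^t e^{-B(s)}\beta(s)\dx s$ is valid for $\alpha$ of arbitrary sign precisely because the weight is non-negative, which you implicitly use and which matters since the paper does not assume $\alpha\geqslant 0$.
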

An inequality of the form \eqref{eq:ref_gronwall_type} will be referred to as a  ``Gr\"onwall-like inequality''.

\begin{thm*}[\textsc{\small Rademacher}'s Theorem]\label{thm:Rademacher}
Let $\varphi \colon I \to \R$ be a locally Lipschitz function defined on an interval $I$, and let $t_0\in I$ be fixed.
Then $\varphi$ is almost everywhere differentiable, and it holds that
\begin{equation*}
\forall t \in I,\quad \varphi(t) = \varphi(t_0) + \int_{t_0}^t \varphi'(s)\dx s,
\end{equation*}
where the latter integral is the Lebesgue integral of the almost everywhere defined function $\varphi'$.
\end{thm*}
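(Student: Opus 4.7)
The plan is to deduce this one-dimensional statement from two classical theorems of Lebesgue: the a.e.\ differentiability of monotone functions, and the fundamental theorem of calculus for absolutely continuous functions. Since differentiability and the integral identity are both local questions, I would first reduce to the case of a compact subinterval. Pick any compact $[a,b]\subset I$ containing $t_0$; by the local Lipschitz assumption, there exists $L=L(a,b)\geqslant 0$ such that $|\varphi(s)-\varphi(t)|\leqslant L|s-t|$ for all $s,t\in[a,b]$. Proving the two conclusions on every such $[a,b]$ yields them on all of $I$.

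On $[a,b]$, consider the auxiliary function $\psi(t)=\varphi(t)+Lt$. The Lipschitz bound gives $\psi(t)-\psi(s)=\varphi(t)-\varphi(s)+L(t-s)\geqslant 0$ whenever $s\leqslant t$, so $\psi$ is non-decreasing. By the Lebesgue differentiation theorem for monotone functions (proved, e.g., via the Vitali covering lemma applied to the four Dini derivatives), $\psi$ is differentiable at almost every point of $[a,b]$. Consequently $\varphi=\psi-Lt$ is differentiable almost everywhere on $[a,b]$, and the bound $|\varphi'(t)|\leqslant L$ holds wherever the derivative exists, so $\varphi'\in L^\infty([a,b])\subset L^1([a,b])$.

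For the integral representation, I would first verify that $\varphi$ is absolutely continuous on $[a,b]$: given $\varepsilon>0$ and any finite collection of pairwise disjoint subintervals $(a_i,b_i)\subset[a,b]$ with $\sum_i(b_i-a_i)<\varepsilon/L$, the Lipschitz bound yields
\begin{equation*}
\sum_i|\varphi(b_i)-\varphi(a_i)|\leqslant L\sum_i(b_i-a_i)<\varepsilon.
\end{equation*}
Then I would invoke the classical result that every absolutely continuous function on a compact interval satisfies the fundamental theorem of calculus, namely $\varphi(t)-\varphi(t_0)=\int_{t_0}^t\varphi'(s)\,\dx s$ for all $t\in[a,b]$. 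Letting $[a,b]$ exhaust $I$ gives the claimed identity on $I$.

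The substantive content lies entirely in the two Lebesgue-type results invoked above, which are standard and can be quoted from any real analysis textbook (for instance, Rudin's \emph{Real and Complex Analysis}, Chapter 7). Thus the only real obstacle is book-keeping to ensure that local Lipschitzness is enough, which the reduction to compact subintervals handles cleanly. No deeper ideas are required for this one-dimensional case; the higher-dimensional Rademacher theorem would demand considerably more work, but is not needed here.
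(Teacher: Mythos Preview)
Your proof sketch is correct and well-organized: the reduction to compact subintervals handles the ``locally'' qualifier cleanly, the observation that Lipschitz implies absolutely continuous is the right bridge, and the two Lebesgue theorems you invoke do the real work. One minor quibble: in the absolute continuity verification you should take $\delta = \varepsilon/L$ only when $L>0$; the case $L=0$ is trivial but worth a word.

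As for comparison with the paper: the paper does not prove this statement at all. It is quoted as a classical analysis result and immediately put to work (applied to norms of tensors restricted along radial geodesics). So there is no ``paper's own proof'' to compare against; your contribution is a self-contained justification of a fact the author chose to take for granted.
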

Rademacher's Theorem will be applied to the restriction of the norm of tensors restricted along radial geodesics $\gamma_p$.

The following Lemma will be useful in order to estimate the growth of normal Jacobi fields.

\begin{lemma}\label{lem:equadiff_lemma}
Let $f \colon \R_+ \to \R$ be a $\mathcal{C}^2$ function.
Assume that there exist three positive constants $\alpha, \beta$ and $\gamma$ such that
\[
\forall t \geqslant0,\quad |f''(t) + \alpha f'(t)| \leqslant \gamma e^{-\beta t}.
\]
Then $f$ has a limit $f_{\infty}\in \R$ when $t \to +\infty$ and it holds that
\begin{equation*}
\begin{split}
\forall t \geqslant 0,\quad |f'(t)| & \leqslant \begin{cases}
\left(|f'(0)| + \frac{\gamma}{\beta-\alpha}\right)e^{-\beta t} & \text{if } \beta < \alpha, \\
\left(|f'(0)| + \gamma t \right) e^{-\alpha t} & \text{if } \beta = \alpha, \\
\left(|f'(0)| + \frac{\gamma}{\beta-\alpha} \right)e^{-\alpha t} & \text{if } \beta > \alpha,
\end{cases}\\
\forall t \geqslant 0,\quad |f(t)-f_{\infty}| & \leqslant \begin{cases}
\left(|f'(0)| + \frac{\gamma}{\beta-\alpha}\right)\frac{e^{-\beta t}}{\beta} & \text{if } \beta < \alpha, \\
\frac{\alpha|f'(0)| + \gamma(\alpha t+1)}{\alpha^2}  e^{-\alpha t} & \text{if } \beta = \alpha, \\
\left(|f'(0)| + \frac{\gamma}{\beta-\alpha} \right)\frac{e^{-\alpha t}}{\alpha} & \text{if } \beta > \alpha.
\end{cases}
\end{split}
\end{equation*}
\end{lemma}
\begin{proof}
Write $e^{\alpha t}(f''(t)+\alpha f'(t)) = \left(e^{\alpha t} f'(t)\right)'$.
Integrating on $[0,t]$ and using the assumption yields
\begin{equation}
\label{eq:lem1.2}
\forall t \geqslant 0, \quad |f'(t)| \leqslant e^{-\alpha t} |f'(0)| + \gamma e^{-\alpha t}\int_0^t e^{(\alpha-\beta)x} \dx x.
\end{equation}
There are three cases to consider depending on $\beta < \alpha$, $\beta=\alpha$ or $\beta > \alpha$.
In any case, equation \eqref{eq:lem1.2} shows that $f'$ is integrable and hence that $f$ converges.
The result follows from a straightforward integration.
\end{proof}

We finally give a Lemma that will be applied to give a uniform bound on the norm of the shape operator.
\begin{lemma}\label{lem:bound_eigenvalue}
Let $\sigma \colon 	\R_+ \to \R$ be a locally Lipschitz function.
Let $C$ and $a$ be positive constants such that the following inequality holds almost everywhere
\begin{equation*}
\sigma' \leqslant -\sigma^2 + 1 + C e^{-at}.
\end{equation*}
Then there exists a constant $C'>0$ depending only on $C$ and $a$ such that
\begin{equation*}
\forall t \geqslant 0,\quad \sigma(t) \leqslant 1 + (\sigma(0)+C') \begin{cases}
e^{-at} & \text{if } 0<a<2,\\
(t+1)e^{-2t} & \text{if } a =2,\\
e^{-2t}  & \text{if } a >2.
\end{cases}
\end{equation*}
\end{lemma}

\begin{proof}
Let $\varsigma = \sigma - 1$.
Then $\varsigma$ is locally Lipschitz and almost everywhere differentiable, and it follows from the assumption on $\sigma$ and from the fact that $\varsigma ^2 \geqslant 0$, that
\begin{equation*}
\varsigma'(t) \leqslant -2 \varsigma + Ce^{-at}  \quad a.e.
\end{equation*}
Thus, it holds that
\begin{equation*}
(\varsigma(t)e^{2t})' \leqslant Ce^{(2-a)t} \quad a.e.
\end{equation*}
Integrating this last inequality yields
\begin{equation*}
\forall t \geqslant 0,\quad \varsigma(t) \leqslant \varsigma(0)e^{-2t} + C\begin{cases}
\frac{e^{-at}-e^{-2t}}{2-a} & \text{if } 0<a<2,\\
t e^{-2t} & \text{if } a = 2,\\
\frac{e^{-2t}-e^{-at}}{a-2}& \text{if } a >2.
\end{cases}
\end{equation*}
The result now follows from comparing the exponents.
\end{proof}


\section[Asymptotically locally complex hyperbolic manifolds]{Asymptotically locally complex hyperbolic manifolds}
\label{section:ALCH_manifolds}
\subsection{The ALCH and ALS conditions}

We define the two following asymptotic conditions.
\begin{definition}[\ALCH and \ALS manifolds]
Let $(M,g,J)$ be a complete non-compact K\"ahler manifold, $K\subset M$ be a compact subset and $r = d_g(\cdot,K)$ be the distance function to $K$.
\begin{enumerate}
\item $(M,g,J)$ is said to be \hypertarget{ALCH}{\textit{asymptotically locally complex hyperbolic}}, \ALCH in short, of order $a>0$, if there exists a constant $C_0>0$ such that
\begin{equation*}
\|R-R^0\|_g \leqslant C_0 e^{-ar}.
\end{equation*}
\item $(M,g)$ is said to be \hypertarget{ALS}{\textit{asymptotically locally symmetric}}, \ALS in short, of order $b>0$, if there exists a constant $C_1 >0$ such that
\begin{equation*}
\|\nabla R\|_g \leqslant C_1 e^{-br}.
\end{equation*}
\end{enumerate}
\end{definition}
\begin{remark}
These two definitions do not depend on the choice of the compact subset $K \subset M$.
\end{remark}
In practice, $K$ will refer to an essential subset.
The complex hyperbolic space is of course \ALCH and \ALS of any order since in that case, $R=R^0$ and $\nabla R=0$.

\subsection{First consequences}

We fix $(M,g,J)$ an \ALCH manifold of order $a>0$.
The following Lemmas are direct consequences of the definition of $R^0$.
\begin{lemma}\label{lem:R_bounded}
The norm of the Riemann curvature tensor of an \ALCH manifold of order $a>0$ is uniformly bounded.
\end{lemma}

\begin{lemma}\label{lem:pinched_sectional_curvature}
Let $p\in M$ and $P\subset T_pM$ be a tangent plane.
Then
\begin{equation*}
-1 - C_0 e^{-ar(p)} \leqslant \sec (P) \leqslant -\frac{1}{4} + C_0e^{-ar(p)}.
\end{equation*}
\end{lemma}

We now assume that $K\subset M$ is an essential subset and that the sectional curvature of $\bar{M\setminus K}$ is non-positive.
In that case, the shape operator $S$ is positive semi-definite on $\bar{M\setminus K}$, and its operator norm at a point $\gamma_p(r)\in \bar{M\setminus K}$ is given by its largest eigenvalue.
\begin{proposition}\label{prop:norm_S_uniformly_bounded}
There exists a constant $C>0$ independent of $(r,p)$ such that
\begin{equation*}
\|S_{\gamma_p(r)}\|_g \leqslant 1 + C
\begin{cases}
e^{-ar} & \text{if } 0<a<2,\\
(r+1)e^{-2r} & \text{if } a =2,\\
e^{-2r}  & \text{if } a >2.
\end{cases}
\end{equation*}
In particular, $\|S\|_g$ is uniformly bounded on $\bar{M\setminus K}$, and $\|S\|_g-1$ is bounded above by an integrable function on $\bar{M\setminus K}$.
\end{proposition}
\begin{proof}
Let $p \in \dK$ and $\sigma \colon \R \to \R$ be defined by $\sigma(r) = \|S_{\gamma_p(r)}\|_g$.
Since $S$ is positive semi-definite, $\sigma(r)$ is the largest eigenvalue of $S_{\gamma_p(r)}$.
Identify all tangent spaces along $\gamma_p$ using parallel transport, and let $(S_r)_{r\geqslant 0}$ and $(R_r)_{r\geqslant 0}$ be the associated endomorphisms of $T_p\dK$ obtained from $S$ and $R_{\delr}$.
Therefore, $\sigma$ is given by the expression $\sigma(r) = \sup_{X\in T_p\dK, \|X\|_g=1} g(S_rX,X)$.
In this identification, the Riccati equation reads $S_r' = -S_r^2 - R_r$.
Let $X\in T_p\dK$ be a unit vector, $r\geqslant 0$ and $h>0$.
Then
\begin{equation*}
\begin{split}
g(S_{r+h}X,X) &= g\big((S_r + h S_r' + o(h))X,X\big)  \\
&= g\big((S_r-hS_r^2)X,X\big) + h (-g(R_rX,X) + o(1)\big).
\end{split}
\end{equation*}
For small enough $h>0$, $S_r - hS_r^2$ is positive semi-definite and its largest eigenvalue is given by $\sigma(r) - h\sigma(r)^2$.
In addition, $g(R_rX,X)$ is the sectional curvature of the tangent plane spanned by $\delr$ and the parallel transport of $X$ along $\gamma_p$ evaluated at $\gamma_p(r)$, which is bounded below by $-1-C_0e^{-ar}$ (Lemma \ref{lem:pinched_sectional_curvature}).
It then follows that
\begin{equation*}
g(S_{r+h}X,X) \leqslant \sigma(r) - h\sigma(r)^2 + h\left(1+C_0e^{-ar} + o(1)\right).
\end{equation*}
Taking the supremum over all unit vectors $X$ of this last inequality yields
\begin{equation*}
\sigma(r+h) \leqslant \sigma(r) - h\sigma(r)^2 + h\left(1+C_0e^{-ar} + o(1)\right),
\end{equation*}
from which is deduced the inequality
\begin{equation}\label{eq:limsup}
\limsup_{h\to 0} \frac{\sigma(r+h) - \sigma(r)}{h} \leqslant - \sigma(r)^2 + 1 + C_0e^{-ar}.
\end{equation}
Since $S$ is smooth and $\|\cdot\|$ is Lipschitz, $\sigma$ is locally Lipschitz, and by Rademacher's Theorem, $\sigma$ is almost everywhere differentiable.
It then follows from equation \eqref{eq:limsup} that
\begin{equation*}
\sigma ' \leqslant -\sigma^2 + 1 + C_0e^{-ar} \quad a.e.
\end{equation*}
According to Lemma \ref{lem:bound_eigenvalue}, there exists $C'>0$ depending only on $C_0$ and $a$ such that
\begin{equation*}
\|S_{\gamma_p(r)}\|_g \leqslant 1 + (\|S_p\|_g + C')\begin{cases}
e^{-ar} & \text{if } 0<a<2,\\
(r+1)e^{-2r} & \text{if } a =2,\\
e^{-2r}  & \text{if } a >2.
\end{cases}
\end{equation*}
The result now follows by defining $C = C' + \sup_{p\in \dK} \|S_p\|_g$ which is finite by compactness of $\dK$.
\end{proof}

\subsection{A volume growth lower bound}

The \ALCH assumption implies two bounds on the sectional curvature.
The lower bound $\sec \geqslant -1 + \mathcal{O}(e^{-ar})$ forces the largest eigenvalue of the shape operator, and thus its operator norm, to be uniformly bounded.
We shall now show that the upper bound $\sec \leqslant -\frac{1}{4} + \mathcal{O}(e^{-ar})$ forces the trace of the shape operator to be bounded from below.
We then derive a lower bound on the volume density function using equation \eqref{eq:volume_density_trace}.
Our study is inspired by \cite{cheeger_comparison_2008} and \cite{heintze_general_1978}.

We first show the following general Lemma from geometric comparison analysis.
It is very similar to different results stated in \cite{heintze_general_1978}.
The author could not find a precise proof of the exact formulation given below and therefore decided to give one, relying on classical Jacobi field techniques.
\begin{lemma}\label{lem:trace_bound_below}
Let $(M^{m+1},g)$ be a complete Riemannian manifold, $N\subset M$ be a hypersurface co-oriented by a unit normal $\nu$, $p\in \dK$, and $\kappa>0$ such that
\begin{enumerate}
\item $N$ is convex with respect to $\nu$,
\item there exists $R>0$ such that the normal exponential map $E$  is a diffeomorphism from $[0,R)\times N$ onto its image,
\item the sectional curvature on this image is non-positive,
\item there exists $r_0 \in [0,R)$ such for all $r\in [r_0,R)$, the sectional curvature of any linear plane tangent at $\gamma_p(r)$ is bounded above by $-\kappa^2$.
\end{enumerate}
Then it holds that
\begin{equation*}
\forall r \in [r_0,R),\quad \trace (S_{\gamma_p(r)}) \geqslant m\kappa \tanh \big(\kappa(r-r_0)\big).
\end{equation*}
\end{lemma}
\begin{proof}
The proof is quite long and goes in two steps.
We first bound from below the trace of $S$ by an integral related to the norm of some Jacobi fields.
Then, we compare the situation with its counterpart in the hyperbolic space of dimension $m+1$ and sectional curvature $-\kappa^2$, where $N$ is replaced by an isometrically embedded $m$ dimensional hyperbolic space of the same sectional curvature.

Let $\{\delr,E_1,\ldots,E_m\}$ be an orthonormal frame along $\gamma_p$ obtained using the parallel transport of an orthonormal basis $\{\nu,e_1,\ldots,e_m\}$ of $T_pM$.
Let $r \in [r_0,R)$ be fixed.
For $j \in \{1,\ldots,m\}$, let $Y_j$ be the unique Jacobi field along $\gamma_p$ such that $Y_j(r) = E_j(r)$ and $\Ndr Y_j(r) = S_{\gamma_p(r)}E_j(r)$.
It is shown similarly to the proof of Lemma \ref{lem:Y_v} that the equality $\Ndr Y_j = SY_j$ holds all along $\gamma_p$.
By assumptions 2. and 3. and by the convexity of $N$, $S$ is a positive definite operator and it follows that, for $j\in \{1,\ldots,m\}$
\begin{equation*}
g(SY_j(r),Y_j(r)) \geqslant g(SY_j(r),Y_j(r)) - g(SY_j(r_0),Y_j(r_0))
= \int_{r_0}^{r}\frac{\dx}{\dx t} g(SY_j,Y_j).
\end{equation*}
The Jacobi field equation yields the equality
\begin{equation*}
\frac{\dx}{\dx t}g(SY_j,Y_j) = \|\Ndr Y_j\|_g^2 - \sec(\delr,Y_j)\|Y_j\|_g^2.
\end{equation*}
Since $\trace \big(S_{\gamma_p(r)}\big) = \sum_{j=1}^m g(SE_j(r),E_j(r))$, the initial conditions for the Jacobi fields $Y_j$ and the assumption 4. now gives
\begin{equation}\label{eq:tr_geqslant_sum_integral}
\trace \big(S_{\gamma_p(r)}\big) \geqslant \sum_{j=1}^m \int_{r_0}^{r} \|\Ndr Y_j\|_g^2 +\kappa^2\|Y_j\|_g^2.
\end{equation}

We now compare the right-hand side of \eqref{eq:tr_geqslant_sum_integral} with a similar situation in the space form of curvature $-\kappa^2$.
Let $(\bar{M},\bar{g})= \R H^{m+1}(-\kappa^2)$ be that space form, $\bar{\nabla}$ its Levi-Civita connection, $\bar{N}= \R H^m(-\kappa^2) \hookrightarrow M$ be isometrically embedded, for instance as the equatorial hyperplane in the ball model, with a unit normal $\bar{\nu}$, and shape operator $\bar{S}$.
Let $\{\bar{\delr},\bar{E}_1,\ldots,\bar{E}_m\}$ be an orthonormal frame along a radial geodesic ${\bar{\gamma}}_{\bar{p}}$ obtained using the parallel transport of an orthonormal basis $\{\bar{\nu},\bar{e}_1,\ldots,\bar{e}_m\}$ of $T_{\bar{p}}\bar{M}$.
For $j\in \{1,\ldots,m\}$, let $\bar{Y}_j$ be the unique Jacobi field along ${\bar{\gamma}}_{\bar{p}}$ such that $\bar{\nabla}_{\bar{\delr}}\bar{Y}_j = \bar{S}\bar{Y}_j$ and $\bar{Y}_j(r-r_0) = \bar{E}_j(r-r_0)$.
Finally, let $X_j$ be defined by
\begin{equation*}
\forall t \in [0,r-r_0],\quad X_j(t) = \sum_{k=1}^m g\left(Y_j(r_0+t),E_k(r_0+t)\right)\bar{E}_k(t),
\end{equation*}
so that the following equalities hold
\begin{equation}\label{eq:norm_X_j_Y_j}
\begin{split}
\forall t \in [0,r-r_0],\quad \|X_j(t)\|_{\bar{g}} &= \|Y_j(r_0+t)\|_g,\\ \|\bar{\nabla}_{\bar{\delr}}X_j(t)\|_{\bar{g}} &= \|\Ndr Y_j(r_0+t)\|_g.
\end{split}
\end{equation}
Expanding the inequality $\|\bar{\nabla}_{\bar{\delr}}(\bar{Y}_j-X_j)\|_{\bar{g}}^2 + \kappa^2 \|\bar{Y}_j - X_j\|_{\bar{g}} \geqslant 0$, and integrating the developed expression on $[0,r-r_0]$ yields
\begin{equation}\label{eq:developed}
\begin{split}
\int_0^{r-r_0} \|\bar{\nabla}_{\bar{\delr}}X_j\|_{\bar{g}}^2 + \kappa^2\|X_j\|_{\bar{g}}^2 & \geqslant 2 \int_0^{r-r_0}\bar{g}(\bar{\nabla}_{\bar{\delr}}\bar{Y}_j,\bar{\nabla}_{\bar{\delr}}X_j) + \kappa^2 \bar{g}(\bar{Y}_j,{X}_j) \\
& \quad - \int_0^{r-r_0}\|\bar{\nabla}_{\bar{\delr}}\bar{Y}_j\|_{\bar{g}}^2 + \kappa^2 \|\bar{Y}_j\|_{\bar{g}}^2.
\end{split}
\end{equation}
Since $\bar{Y}_j$ is a Jacobi field, the integrands of the right-hand side satisfy
\begin{equation*}
\begin{split}
\bar{g}(\bar{\nabla}_{\bar{\delr}}\bar{Y}_j,\bar{\nabla}_{\bar{\delr}}X_j) + \kappa^2 \bar{g}(\bar{Y}_j,{X}_j) &= \frac{\dx}{\dx t} \bar{g}(\bar{\nabla}_{{\bar{\Ndr}}}\bar{Y}_j,X_j), \text{ and}\\
\|\bar{\nabla}_{\bar{\delr}}\bar{Y}_j\|_{\bar{g}}^2 + \kappa^2 \|\bar{Y}_j\|_{\bar{g}} ^2 & = \frac{\dx}{\dx t}\bar{g}(\bar{\nabla}_{\bar{\delr}}\bar{Y}_j,\bar{Y}_j).
\end{split}
\end{equation*}
Recall that $\bar{\nabla}_{\bar{\delr}}\bar{Y}_j = \bar{S}\bar{Y}_j$.
Since $N$ is isometrically embedded, it holds that $\bar{S}_{\bar{p}}=0$.
Also, recall that at $r-r_0$, $\bar{Y}_j(r-r_0) = X_j(r-r_0)$.
Hence, \eqref{eq:developed} yields
\begin{equation}
\label{eq:integral_geqslant_bar_S}
\int_0^{r-r_0} \|\bar{\nabla}_{\bar{\delr}}X_j\|_{\bar{g}}^2 + \kappa^2\|X_j\|_{\bar{g}}^2  \geqslant \bar{g}(\bar{S}\bar{E}_j(r-r_0),\bar{E}_j(r-r_0)).
\end{equation}
It now follows from equations \eqref{eq:tr_geqslant_sum_integral}, \eqref{eq:norm_X_j_Y_j} and \eqref{eq:integral_geqslant_bar_S} that we have the inequality $\trace\big(S_{\gamma_p(r)}\big) \geqslant \trace \big( \bar{S}_{\bar{\gamma}_{\bar{p}}(r-r_0)}\big)$.
Note that $\Sigma_{{\bar{\gamma}}_{\bar{p}}(t)} = \kappa\tanh(\kappa t) \Id_{{\{{\bar{\gamma}}_{\bar{p}}(t)\}}^{\perp}}$ satisfies the Riccati equation
\begin{equation*}
\bar{\nabla}_{\bar{\delr}} \Sigma =- \Sigma^2 - \bar{R}_{\bar{\delr}},
\end{equation*}
with initial data $\Sigma_{\bar{p}}=0$, so that $\bar{S}=\Sigma$.
This concludes the proof.
\end{proof}

\label{dfn:lambda}
We now return to the setting of this paper and consider a K\"ahler manifold $(M,g,J)$ with an essential subset $K$.
It is canonically oriented, and so is the co-oriented hypersurface $\dK$.
Let  $v_g$ and $v_{g|_{\dK}}$ be the Riemannian volume forms induced by the metrics $g$ and $g|_{\dK}$.
Since $E$ is a diffeomorphism, $E^*v_g$ and $\dx r \wedge v_{g|_{\dK}}$ are two volume forms on $\R_+\times \dK$, and they are proportional.
The \textit{volume density function} $\lambda$ is the unique positive function $\lambda \colon \R_+\times \dK \to \R$ such that $E^*v_g = \lambda \dx r \wedge v_{g|_{\dK}}$.
It is clear that $\lambda|_{\{0\}\times \dK}=1$.
The volume density function and the shape operator are related by the following differential equation
\begin{equation}
\label{eq:volume_density_trace}
\frac{\partial \lambda (r,p) }{\partial r} = \lambda(r,p) \,\trace (S_{\gamma_p(r)}).
\end{equation}
We shall now derive from Lemma \ref{lem:trace_bound_below} a lower bound on the volume density function.

\begin{proposition}\label{prop:volume_lower_bound}
Let $(M^{2n+2},g,J)$ be an \ALCH manifold of order $a>0$ with an essential subset $K$, such that the sectional curvature of $\bar{M\setminus K}$ is negative.
Let $\varepsilon \in (0,n + \frac{1}{2})$ be fixed.
Then there exist $r_0=r_0(\varepsilon) >0$ and $\Lambda_-= \Lambda_-(\varepsilon) >0$ such that the volume density function $\lambda$ satisfies
\begin{equation*}
\forall (r,p) \in [r_0,+\infty)\times \dK,\quad \lambda(r,p) \geqslant \Lambda_- e^{(n+\frac{1}{2}-\varepsilon)r}.
\end{equation*}
\end{proposition}
\begin{proof}
Let $\kappa = \frac{1}{2}- \frac{\varepsilon}{2n+1}>0$ and $r_0 = \max\left\{\frac{1}{a}\ln \big(\frac{\frac{1}{4}-\kappa^2}{C_0}\big),1\right\}>0$.
By definition of $r_0$,
\begin{equation*}
\forall r \geqslant r_0,\quad -\frac{1}{4} +C_0 e^{-ar} \leqslant -\kappa^2 <0.
\end{equation*}
According to Lemma \ref{lem:pinched_sectional_curvature}, the sectional curvature of linear planes based at points $\gamma_p(r)$, with $r\geqslant r_0$, have sectional curvature bounded above by $-\kappa^2$.
Hence, Lemma \ref{lem:trace_bound_below} yields the inequality
\begin{equation*}
\forall (r,p) \in [r_0,+\infty) \times \dK,\quad \trace \big( S_{\gamma_p(r)}\big) \geqslant (2n+1)\kappa \tanh(\kappa(r-r_0)).
\end{equation*}
The differential equation \eqref{eq:volume_density_trace} satisfied by $\lambda$ and $S$ now yields
\begin{equation*}
\forall (r,p) \in [r_0,+\infty) \times \dK,\quad \frac{\delr \lambda (r,p)}{\lambda (r,p)} \geqslant (2n+1)\kappa \tanh(\kappa(r-r_0)).
\end{equation*}
Integrating this last inequality yields
\begin{equation*}
\forall (r,p) \in [r_0,+\infty)\times \dK,\quad \lambda(r,p) \geqslant \lambda(r_0,p) \cosh^{2n+1}(\kappa(r-r_0)).
\end{equation*}
The result now follows by setting $\Lambda_- = \frac{e^{-(2n+1)\kappa r_0}}{2^{2n+1}} \min_{p\in \dK} \lambda(r_0,p)$, which exists and is positive since $\dK$ is compact and $\lambda$ positive and continuous, and by noticing that $(2n+1)\kappa = n+\frac{1}{2}-\varepsilon$.
\end{proof}


\section{Normal Jacobi fields estimates}
\label{section:normal_jacobi}
\noindent
In this section, we consider $(M^{2n+2},g,J)$ a fixed \ALCH manifold of order $a>0$ with an essential subset $K$, and we study the asymptotic behaviour of its normal Jacobi fields.
The geometric structure we wish to highlight being of contact nature, we choose not to work in coordinates.
Instead, we define some natural moving frames, which we call radially parallel orthonormal frames, in which the computations are convenient.

\subsection{Radially parallel orthonormal frame}

To the radial vector field $\delr$ is naturally associated the vector field $\Jdelr$.
Since the metric is K\"ahler, $\Jdelr$ is radially parallel.
It can be obtained using the parallel transport of $J\nu$ along the radial geodesics $(\gamma_p)_{p\in \dK}$.

\begin{definition}[Radially parallel orthonormal frame]
Let $\{J\nu,e_1,\ldots,e_{2n}\}$ be an orthonormal frame defined on an open subset $U\subset \dK$.
For $j\in \{1,\ldots,2n\}$, let $E_j$ be vector fields obtained by the parallel transport of $e_j$ along radial geodesics.
The orthonormal frame $\{\delr,\Jdelr,E_1,\ldots,E_{2n}\}$ on the cylinder $E(\R_+\times U)$ is called the \textit{radially parallel orthonormal frame} associated to $\{J\nu,e_1,\ldots,e_{2n}\}$.
\end{definition}

A radially parallel orthonormal frame is composed of radially parallel vector fields.
It is worth noting that $\{\delr,\Jdelr\}$ spans a complex line while $\{\delr,E_j\}$ spans a totally real plane if $j\in \{1,\ldots,2n\}$.
Moreover, $\{E_1,\ldots,E_{2n}\}$ is an orthonormal frame of the $J$-invariant subbundle $\{\delr,\Jdelr\}^{\perp}$ of $TM$.
\begin{lemma}\label{lem:sectional_curvature_radially_parallel_orthonormal_frame}
Let $\{\delr,\Jdelr,E_1,\ldots,E_{2n}\}$ be a radially parallel orthonormal frame.
Then the following holds
\begin{equation*}
\begin{cases}
\hfill|\sec(\delr,\Jdelr) + 1| \leqslant C_0e^{-ar}, \\
\hfill|R(\delr,\Jdelr,\delr,E_j)|  \leqslant C_0e^{-ar}, & \forall j\in \{1,\ldots,2n\},\\
\hfill|\sec(\delr,E_j) + \frac{1}{4}| \leqslant C_0e^{-ar}, & \forall j\in \{1,\ldots,2n\},\\
\hfill|R(\delr,E_i,\delr,E_j)|  \leqslant C_0e^{-ar}, & \forall i,j\in \{1,\ldots,2n\}, i\neq j.
\end{cases}
\end{equation*}
\end{lemma}
\begin{proof}
Let $X$ and $Y$ be unit vector fields.
The \ALCH condition gives
\begin{equation*}
|R(\delr,X,\delr,Y) - R^0(\delr,X,\delr,Y)| \leqslant C_0e^{-ar}.
\end{equation*}
The result follows from a direct computation of $R^0(\delr,X,\delr,Y)$ for the different values of $X$ and $Y$ in $\{\Jdelr, E_j\}$ and their orthogonal properties.
\end{proof}

\subsection{The Jacobi system}\label{section:jacobi_system}

Since $K$ is an essential subset, for $r\geqslant 0$,  $E(r,\cdot)$ is a diffeomorphism between $\dK$ and the level hypersurface at distance $r$ above $\dK$.

\begin{definition}[Local $1$-forms] \label{dfn:local-1-forms}
Let $\{\delr,\Jdelr,E_1,\ldots,E_{2n}\}$ be a radially parallel orthonormal frame on the cylinder $E(\R_+\times U)$.
We define on $U$ the family of 1-forms $\{\eta_r,\eta^1_r\ldots,\eta^{2n}_r\}$ by
\begin{equation*}
\begin{split}
\forall r \geqslant 0, \quad \eta_r &= e^{-r}E(r,\cdot)^* \left( g(\cdot, \Jdelr)|_{{\delr}^{\perp}}\right),\\
\forall j \in \{1,\ldots,2n\}, \forall r \geqslant 0,\quad  \eta^j_r &= e^{-\frac{r}{2}} E(r,\cdot)^* \left( g(\cdot, E_j)|_{{\delr}^{\perp}}\right).
\end{split}
\end{equation*}
\end{definition}

In other words, $\eta_r(v) = e^{-r}g(Y_v,\Jdelr)$ and $\eta_r^j(v) = e^{-\frac{r}{2}}g(Y_v,E_j)$.
Note that $\eta_r$ is defined on all of $\dK$ and does not depend on the choice of a radially parallel orthonormal frame.
With these notations, a normal Jacobi field $Y_v = E_*v$ along a radial geodesic $\gamma_p$ reads
\begin{equation}
\label{eq:component_jacobi}
Y_v = \eta_r(v) e^r \Jdelr + \sum_{j=1}^{2n} \eta_r^j(v) e^{\frac{r}{2}} E_j.
\end{equation}
On the cylinder $E(\R_+\times U)$, we also define the following $(2n+1)^2$ functions $\{u^i_k\}_{i,k\in \{0,\ldots,2n\}}$ by
\begin{equation}
\label{eq:def_uij}
u^i_k = - \begin{cases}
\sec(\delr,\Jdelr)+1 & \text{if } i=k=0,\\
e^{-\frac{r}{2}} R(\delr,\Jdelr,\delr,E_k) & \text{if } i = 0,k\in \{1,\ldots,2n\},\\
e^{\frac{r}{2}}R(\delr,E_i,\delr,\Jdelr) & \text{if } i\in \{1,\ldots,2n\}, k = 0,\\
R(\delr,E_i,\delr,E_k) & \text{if } i,k\in \{1,\ldots,2n\}, i\neq k,\\
\sec(\delr,E_i) + \frac{1}{4} & \text{if } i=k\in \{1,\ldots, 2n\}.
\end{cases}
\end{equation}

For the rest of this subsection, we shall fix the vector $v$ and henceforth drop out the variable $v$: we will write $\eta_r$ and $\eta^j_r$ instead of $\eta_r(v)$ and $\eta^j_r(v)$.
In addition, when a summation is involved, we set $\eta^0_r = \eta_r$ for the sake of compactness.
\begin{lemma}
The functions $\{\eta_r^i(v)\}_{i\in\{0,\ldots,2n\}}$ are solutions of the linear second order differential system
\begin{equation}
\label{eq:Jacobi_system}
\begin{cases}
\hfill \delr^2\eta_r + 2 \delr\eta_r   = \displaystyle\sum_{k=0}^{2n} u^0_k \eta^k_r,\\
\delr^2\eta^j_r + \delr\eta^j_r  = \displaystyle\sum_{k=0}^{2n} u^j_k \eta^k_r,& \forall j \in \{1,\ldots,2n\}.
\end{cases}
\end{equation}
\end{lemma}
\begin{proof}
Since the vectors of the radially parallel orthonormal frame are radially parallel, it follows that
\begin{equation*}
\Ndr(\Ndr Y_v) = (\delr^2\eta_r + 2 \delr\eta_r + \eta_r)e^r \Jdelr +\sum_{j=1}^{2n}\left(\delr^2{\eta^j_r} + \delr\eta^j_r + \frac{1}{4}\eta^j_r\right) e^{\frac{r}{2}}E_j.
\end{equation*}
Since $Y_v$ is a Jacobi field along $\gamma_p$, it holds that
\begin{equation*}
\Ndr(\Ndr Y_v) = -\eta_r e^r R(\delr,\Jdelr)\delr - \sum_{j=1}^{2n} \eta^j_re^{\frac{r}{2}} R(\delr,E_j)\delr.
\end{equation*}
The result then follows from an identification of the coefficients in the orthonormal frame $\{\delr,\Jdelr,E_1,\ldots,E_{2n}\}$.
\end{proof}
We now give a reformulation of the Jacobi system \eqref{eq:Jacobi_system} with integrals.
\begin{lemma}
The functions $\{\eta^i_r\}_{i\in\{0,\ldots,2n\}}$ are solutions of the integral system
\begin{equation}
\label{eq:integral_system}
\begin{cases}
\eta_r  &= \quad \displaystyle \eta_0 + \delr\eta_0\frac{1-e^{-2r}}{2} +\displaystyle\int_0^r\sum_{k=0}^{2n} u^0_k(\gamma_p(s)) \eta^k_ss \frac{1-e^{-2(r-s)}}{2}\dx s,\\
\eta^j_r  &=\quad \eta^j_0 + \delr\eta^j_0(1-e^{-r}) + \displaystyle \int_0^r\sum_{k=0}^{2n} u^j_k(\gamma_p(s)) \eta^k_s(1-e^{-(r-s)})\dx s,
\\ & \hfill  \forall j \in \{1,\ldots,2n\}.
\end{cases}
\end{equation}
\end{lemma}

\begin{proof}
We give details for the first equation, the other ones being proven similarly.
The differential equation satisfied by $\eta_r(v)$ (see equation \eqref{eq:Jacobi_system}) reads
\begin{equation*}
\delr\left(e^{2r}\delr\eta_r\right) = \sum_{k=0}^{2n} e^{2r}u^0_k \eta^k_r,
\end{equation*}
which integrates as
\begin{equation*}
\eta_r = \delr\eta_0 e^{-2r} + \int_0^r \sum_{k=0}^{2n} u^0_k(\gamma_p(s))\eta^k_s e^{-2(r-s)} \dx s.
\end{equation*}
A second integration now gives
\begin{equation*}
\eta_r = \eta_0 + \delr\eta_0\frac{1-e^{-2r}}{2} + \sum_{k=0}^{2n}\int_0^r \int_0^tu^0_k(\gamma_p(s))\eta^k_s e^{-2(t-s)}\dx s \dx t.
\end{equation*}
The function $(s,t) \in [0,r]^2 \mapsto u^0_k(\gamma_p(s))\eta^k_s e^{-2(t-s)}\mathbf{1}_{\{s\leqslant t\}}$ is measurable and bounded on a compact domain, and hence integrable.
It now follows from Fubini's Theorem that for $k\in \{0,\ldots,2n\}$, it holds that
\begin{equation*}
\begin{split}
\int_0^r \int_0^t u^0_k(\gamma_p(s))\eta^k_s e^{-2(t-s)} \dx s \dx t &= \int_{[0,r]^2} u^0_k(\gamma_p(s))\eta^k_s e^{-2(t-s)}\mathbf{1}_{\{s\leqslant t\}} \dx s \dx t \\
&= \int_{[0,r]^2}u^0_k(\gamma_p(s))\eta^k_s e^{-2(t-s)}\mathbf{1}_{\{s\leqslant t\}} \dx t \dx s \\
&= \int_0^ru^0_k(\gamma_p(s)) \eta^k_s e^{2s}\left(\int_s^r e^{-2t} \dx t\right) \dx s \\
&= \int_0^r u^0_k(\gamma_p(s)) \eta^k_s \frac{1-e^{-2(r-s)}}{2}\dx s.
\end{split}
\end{equation*}
The proof is now complete.
\end{proof}

Let $u \colon \R_+ \to \R_+$ be defined by $u(r) = \max_{i,k\in \{0,\ldots,2n\}}\left|u^i_k\left(\gamma_p(r)\right)\right|$.
\begin{lemma}\label{lem:sum_phi_j_upper_bound}
The following upper bound holds
\begin{equation*}
\forall r\geqslant 0,\quad \sum_{i=0}^{2n} |\eta^i_r| \leqslant \big((2n+1)\|S_p\|+n+1\big)\|v\| \exp\left((2n+1)\int_0^r u(s)\dx s\right).
\end{equation*}
\end{lemma}

\begin{proof}
First, note that for $0\leqslant r \leqslant s$, it holds that
\begin{equation*}
\max\left\{\left|\frac{1-e^{-2r}}{2}\right|, |1-e^{-r}|, \left|\frac{1-e^{-2(r-s)}}{2}\right|, \left|1-e^{-(r-s)}\right|\right\} \leqslant 1.
\end{equation*}
The triangle inequality applied to the integral system \eqref{eq:integral_system} thus yields
\begin{equation*}
\begin{cases}
|\eta_r| &\leqslant |\eta_0| + |\delr\eta_0| + \displaystyle\int_0^r u(s) \sum_{i=0}^{2n}|\eta^i_s| \dx s,\\
|\eta^j_r| &\leqslant |\eta^j_0| + |\delr\eta^j_0| +\displaystyle \int_0^r u(s) \sum_{i=0}^{2n}|\eta^i_s| \dx s,\quad \forall j\in \{1,\ldots,2n\}.
\end{cases}
\end{equation*}
Summing all these inequalities now yields the Gr\"onwall-like inequality
\begin{equation*}
\sum_{i=0}^{2n} |\eta^i_r| \leqslant \left(\sum_{i=0}^{2n}|\eta^i_0| + |\delr\eta^i_0|\right) + \int_0^r (2n+1)u(s) \sum_{i=0}^{2n}|\eta^i_s| \dx s.
\end{equation*}
Applying Gr\"onwall's inequality to $\sum_{i=0}^{2n}|\eta^i_r|$ thus shows that
\begin{equation*}
\sum_{i=0}^{2n} |\eta^i_r| \leqslant \left(\sum_{i=0}^{2n} |\eta^i_0| + |\delr\eta^i_0|\right) \exp\left( (2n+1)\int_0^r u(s) \dx s\right).
\end{equation*}
Recall that $\Jdelr|_{\dK} = J\nu$, and that $E_k(0) = e_k$ for $k\in \{1,\ldots,2n\}$.
Hence, $\eta_0 = g(v,J\nu)$, $\delr\eta_0 = g(S_pv-v,J\nu)$, $\eta^k_0 = g(v,e_k)$, and $\delr\eta^k_0 = g(Sv-\frac{1}{2}v,e_k)$ for $k\in \{1,\ldots,2n\}$, and the result directly follows from Cauchy-Schwarz inequality.
\end{proof}

We shall now show that if $M$ is \ALCH of order $a>\frac{1}{2}$, then the components $\{\eta_r^j(v)\}_{j\in \{0,\ldots,2n\}}$ converge as $r \to +\infty$ with a well understood decay.
\begin{proposition}\label{prop:convergence_phi_j}
Let $(M^{2n+2},g,J)$ be an \ALCH manifold of order $a>\frac{1}{2}$ with an essential subset $K$.
Let $\{\delr,\Jdelr,E_1,\ldots,E_{2n}\}$ be a radially parallel orthonormal frame on a cylinder $E(\R_+\times U)$, $p\in U$ and $v\in T_p\dK$.
Let $Y_v = E_*v$ be the normal Jacobi field along $\gamma_p$ associated to $v$.
If $\eta_r$ and $\{\eta_r^j\}_{j\in \{1,\ldots,2n\}}$ are the component functions of $Y_v$ defined by equation \eqref{eq:component_jacobi}, then there exists a constant $C>0$ depending only on $C_0$ and $a$, and constants $\eta_{\infty}$ and $\{\eta^j_{\infty}\}_{j\in\{1,\ldots,2n\}}$ such that
\begin{equation}
\label{eq:convergence_phi_j}
\begin{split}
\max\{|\delr\eta_r|,|\eta_r-\eta_{\infty}|\}  &\leqslant C\|v\|_g \begin{cases}
e^{-ar} & \text{if } \frac{1}{2}<a<\frac{3}{2},\\
(r+1)e^{-\frac{3}{2}r} & \text{if } a= \frac{3}{2},\\
e^{-\frac{3}{2}r} & \text{if } a > \frac{3}{2},
\end{cases} \\
\max\{|\delr\eta^j_r|,|\eta^j_r-\eta^j_{\infty}|\}  &\leqslant C\|v\|_g \begin{cases}
e^{-(a-\frac{1}{2})r} & \text{if } \frac{1}{2}<a<\frac{3}{2},\\
(r+1)e^{-r} & \text{if } a= \frac{3}{2},\\
e^{-r} & \text{if } a > \frac{3}{2},
\end{cases}\\ &  \hspace{3.9cm} \forall j \in \{1,\ldots,2n\}	.
\end{split}
\end{equation}
\end{proposition}

\begin{proof}
According to Lemma \ref{lem:sectional_curvature_radially_parallel_orthonormal_frame}, the definition of the functions $\{u^i_k\}_{i,k\in \{0,\ldots,2n\}}$ (see equation \eqref{eq:def_uij}) yields $0\leqslant u(r) \leqslant C_0 e^{-(a-\frac{1}{2})r}$ for all $r\geqslant 0$.
Therefore, $u$ is integrable on $\R_+$ and one has $\int_{\R_+} u \leqslant \frac{2C_0}{2a-1}$.
Since $\dK$ is compact, $\sup_{p\in \dK} \|S_p\|_g <+\infty$.
It follows from Lemma \ref{lem:sum_phi_j_upper_bound} that
\begin{equation}
\label{eq:uniform_bound_phi}
\forall r \geqslant 0,\quad \sum_{i=0}^{2n}|\eta^i_r| \leqslant c\|v\|_g,
\end{equation}
with $c= \big((2n+1)\sup_{p\in \dK}\|S_p\|_g + n +1\big) \exp\big(\frac{(4n+2)C_0}{2a-1}\big)$, which only depends on $a$ and $C_0$.
Putting this upper bound in the Jacobi system \eqref{eq:Jacobi_system} yields
\begin{equation*}
\label{eq:Jacobi_system_2}
\begin{cases}
\hfill|\delr^2\eta_r + 2 \delr\eta_r|   \leqslant \displaystyle c\|v\|_g\sum_{k=0}^{2n} |u^0_k|,\\
\hfill|\delr^2\eta^j_r + \delr\eta^j_r)|  \leqslant \displaystyle c\|v\|_g\sum_{k=0}^{2n} |u^j_k|, &\forall j \in \{1,\ldots,2n\}.
\end{cases}
\end{equation*}
Looking back at the definitions of the functions $\{u^i_k\}_{i,k\in\{0,\ldots,2n\}}$ (see equation \eqref{eq:def_uij}), Lemma \ref{lem:sectional_curvature_radially_parallel_orthonormal_frame} gives
\begin{equation}
\label{eq:decoupled_system}
\begin{cases}
\hfill		|\delr^2\eta_r + 2 \delr\eta_r|  &\leqslant \displaystyle c'\|v\|_ge^{-ar},\\
\hfill		|\delr^2\eta^j_r + \delr\eta^j_r|  &\leqslant \displaystyle c'\|v\|_ge^{-(a-\frac{1}{2})r},  \quad \forall j \in \{1,\ldots,2n\},
\end{cases}
\end{equation}
with $c' = (2n+1)C_0c$, which only depends on $C_0$ and $a$.
The result now follows from applying Lemma \ref{lem:equadiff_lemma} to each inequality of the decoupled system \eqref{eq:decoupled_system}, and recalling that we have upper bounds on the initial data $|\eta^i_0|$ and $|\delr\eta^i_0|$ that are linear in $\|v\|_g$.
Notice that a uniform constant $C>0$ in \eqref{eq:convergence_phi_j} can be obtained by taking the maximum of the $(2n+1)$ constants given by Lemma \ref{lem:equadiff_lemma}, which depends only on $a$ and $C_0$.
\end{proof}

Proposition \ref{prop:convergence_phi_j} yields a pointwise convergence of the coefficients of normal Jacobi fields with a sharp estimate on their asymptotic behaviour.
The following section focuses on their dependence with respect to the vector $v$.

\subsection{The local 1-forms}

We now consider again the dependence in $v$ of the coefficients $\eta_r(v)$ and $\{\eta^j_r(v)\}_{j\in \{1,\ldots,2n\}}$.
The limits given by Proposition \ref{prop:convergence_phi_j} are simply denoted by $\eta(v)$ and $\eta^j(v)$.
In these terms, Proposition \ref{prop:convergence_phi_j} becomes the following.

\begin{proposition}\label{prop:convergence_etar}
Let $(M^{2n+2},g,J)$ be an \ALCH manifold of order $a>\frac{1}{2}$, with an essential subset $K$.
Then there exists a continuous $1$-form $\eta$ on $\dK$ and a constant $C>0$ depending only on $C_0$ and $a$ such that $\forall r \in \R_+, \forall (p,v) \in T\dK$
\begin{equation}
\label{eq:estimate_eta}
\max\left\{\left|\delr \eta_r(v)\right|,|\eta_r(v)- \eta(v)|\right\}  \leqslant C\|v\|_g \begin{cases}
e^{-ar} & \text{if } \frac{1}{2}<a<\frac{3}{2},\\
(r+1)e^{-\frac{3}{2}r} & \text{if } a= \frac{3}{2},\\
e^{-\frac{3}{2}r} & \text{if } a > \frac{3}{2}.
\end{cases}
\end{equation}
Moreover, if $\{\delr,\Jdelr,E_1,\ldots,E_{2n}\}$ is a radially parallel orthonormal frame on the cylinder $E(\R_+\times U)$, for all $j \in \{1,\ldots, 2n\}$, there exists a continuous $1$-form $\eta^j$ on $U$ such that $\forall r \in \R_+, \forall (p,v) \in TU$
\begin{equation}
\label{eq:estimate_etai}
\max\left\{\left|\delr\eta_r^j(v)\right|,|\eta^j_r(v) - \eta^j(v)|\right\}  \leqslant C\|v\|_g \begin{cases}
e^{-(a-\frac{1}{2})r} & \text{if } \frac{1}{2}<a<\frac{3}{2},\\
(r+1)e^{-r} & \text{if } a= \frac{3}{2},\\
e^{-r} & \text{if } a > \frac{3}{2}.
\end{cases}
\end{equation}
\end{proposition}

\begin{proof}
Let $\eta(v) = \eta_{\infty}$ and $\eta^j(v) = \eta^j_{\infty}$ for $j\in \{1,\ldots,2n\}$ be defined pointwise, where $\eta_{\infty}$ and $\{\eta^j_{\infty}\}_{j\in \{1,\ldots,2n\}}$ are given by Proposition \ref{prop:convergence_phi_j}.
The claimed estimates \eqref{eq:estimate_eta} and \eqref{eq:estimate_etai} are obtained as a straightforward translation of Proposition \ref{prop:convergence_phi_j}.

In addition, \eqref{eq:estimate_eta} shows that $(\eta_r)_{r\geqslant 0}$ uniformly converges to $\eta$ on any compact subset of $T\dK$ as $r\to +\infty$.
Therefore, $\eta$ is continuous.
It is furthermore linear in the fibres as a pointwise limit of $1$-forms.
It follows that $\eta$ is a continuous $1$-form on $\dK$.
The exact same proof shows that if $j\in \{1,\ldots,2n\}$, then $(\eta^j_r)_{r\geqslant 0}$ uniformly converges to $\eta^j$ on every compact subset of $TU$ as $r\to +\infty$.
Therefore, $\eta^j$ is a continuous $1$-form on $U$.
The proof is now complete.
\end{proof}

\begin{corollary}\label{cor:uniform_bound_etai}
Under the assumptions of Proposition \ref{prop:convergence_etar}, there exists $c>0$ depending only on $a$ and $C_0$ such that
\begin{equation*}
\forall (p,v) \in T\dK,\quad |\eta(v)| \leqslant c\|v\|_g,
\end{equation*}
and
\begin{equation*}
\forall j \in \{1,\ldots,2n\}, \forall (p,v) \in TU,\quad |\eta^j(v)| \leqslant c \|v\|_g.
\end{equation*}
\end{corollary}

\begin{proof}
Equation \eqref{eq:uniform_bound_phi} gives the existence of $c$ depending only on $a$ and $C_0$ such that
\begin{equation*}
\forall r \geqslant 0, \forall (p,v) \in TU,\quad  |\eta_r(v)| + \sum_{j=1}^{2n} |\eta^j_r(v)| \leqslant c\|v\|_g.
\end{equation*}
The result follows by taking the limit as $r\to +\infty$.
Note that $c$ does not depend on $p$ or $U$, so that the upper bound is true on all of $\dK$ for $\eta$.
\end{proof}

\subsection{Normal Jacobi estimates}

Regarding Proposition \ref{prop:convergence_etar}, we henceforth assume $a>\frac{1}{2}$.
Consider a radially parallel orthonormal frame $\{\delr,\Jdelr,E_1,\ldots,E_{2n}\}$ on a cylinder $E(\R_+\times U)$, and let $\eta,\eta^1,\ldots,\eta^{2n}$ be the continuous $1$-forms given by Proposition \ref{prop:convergence_etar}.

\begin{definition}[Asymptotic vector fields]
\label{def:Z}
For $(r,p)\in \R_+\times U$, $v \in T_p\dK$, define the vectors $Z_v(\gamma_p(r)),Z_v'(\gamma_p(r))\in T_{\gamma_p(r)}M$ by
\begin{equation*}
\begin{split}
Z_v(\gamma_p(r)) &= \eta(v)e^r \Jdelr + \sum_{j=1}^{2n} \eta^j(v)e^{\frac{r}{2}}E_j,\\
Z_v'(\gamma_p(r)) &= \eta(v)e^r \Jdelr + \frac{1}{2}\sum_{j=1}^{2n} \eta^j(v)e^{\frac{r}{2}}E_j.
\end{split}
\end{equation*}
If $v$ is a vector field on $U$, we refer to $Z_v$ and $Z'_v$ as the \textit{asymptotic vector fields} related to the vector fields $Y_v$ and $SY_v$.
\end{definition}

\begin{proposition}\label{prop:Y_v-Z_v}
Assume that $a > \frac{1}{2}$.
Let $\{\delr,\Jdelr,E_1,\ldots,E_{2n}\}$ be a radially parallel orthonormal frame on a cylinder $E(\R_+\times U)$.
Then there exists $C>0$ depending only on $a$ and $C_0$ such that for any vector field $v$ on $U$,
\begin{equation*}
\max\{\|Y_v-Z_v\|_g,\|SY_v-Z'_v\|_g\} \leqslant C\|v\|_g \begin{cases}
e^{-(a-1)r} & \text{if } \frac{1}{2}<a<\frac{3}{2},\\
(r+1)e^{-\frac{r}{2}} & \text{if } a = \frac{3}{2},\\
e^{-\frac{r}{2}} &\text{if } a>\frac{3}{2}.
\end{cases}
\end{equation*}
\end{proposition}

\begin{proof}
By their very definition, it holds that
\begin{equation*}
Y_v - Z_v = (\eta_r(v)-\eta(v))e^r \Jdelr + \sum_{j=1}^{2n}(\eta_r^j(v)-\eta^j(v))e^{\frac{r}{2}}E_j.
\end{equation*}
Since $SY_v = \Ndr Y_v$, it also holds that
\begin{equation*}
SY_v - Z'_v =(\delr\eta_r(v)+\eta_r(v)-\eta(v))e^r\Jdelr + \sum_{j=1}^{2n}\big(\delr\eta^j_r(v)+\frac{1}{2}(\eta^j_r(v)-\eta^j(v))\big) e^{\frac{r}{2}}E_j.
\end{equation*}
The result follows from the triangle inequality and from Proposition \ref{prop:convergence_etar}.
\end{proof}

We conclude this section by stating an important upper bound on the growth of normal Jacobi fields.

\begin{lemma}\label{lem:important_estimates}
There exists a constant $c>0$ depending only on $a$ and $C_0$ such that for any local vector fields $v$ on $\dK$, it holds that
\begin{equation*}
\forall r\geqslant 0, \forall p,\quad \max\left\{\|Y_v(\gamma_p(r))\|_g,\|SY_v(\gamma_p(r))\|_g\right\} \leqslant c \|v\|_ge^r.
\end{equation*}
If moreover, $\eta|_p(v) = 0$, then
\begin{equation*}
\forall r\geqslant 0 ,\quad \max\{\|Y_v(\gamma_p(r))\|_g,\|SY_v(\gamma_p(r))\|_g\} \leqslant c \|v\|_ge^{\frac{r}{2}}.
\end{equation*}
\end{lemma}

\begin{proof}
By the triangle inequality, $\|Y_v\|_g \leqslant \|Z_v\|_g + \|Y_v-Z_v\|_g$, and the result follows from Corollary \ref{cor:uniform_bound_etai} and Proposition \ref{prop:Y_v-Z_v}.
The same proof applies for $SY_v$.
\end{proof}

\subsection{A volume growth upper bound}

In this section, we compute a pointwise upper bound on the volume density function $\lambda$ defined in section \ref{dfn:lambda} using the normal Jacobi fields estimates derived earlier.
The proof relies on an adapted choice for a basis of the tangent space $T_p\dK$ and on Hadamard's inequality on determinants.

\begin{proposition} \label{prop:volume_upper_bound}
Let $(M,g,J)$ be an \ALCH manifold of order $a>\frac{1}{2}$ with an essential subset $K$.
Let $\{\eta,\eta^1,\ldots,\eta^{2n}\}$ be the local continuous 1-forms associated to a radially parallel orthonormal frame $\{\delr,\Jdelr,E_1,\ldots,E_{2n}\}$ on a cylinder $E(\R_+\times U)$.
Let $p\in U$ and $k_p$ be the rank of the family $\{\eta|_p,\eta^1|_p\ldots,\eta^{2n}|_p\}$ as linear forms on $T_p\dK$.
\begin{itemize}
\item[•] If $\eta|_p=0$ then, there exists a constant $\Lambda_+=\Lambda_+(p) >0$ independent of $r$ such that
\begin{equation}
\label{eq:claimed_upper_bound}
\forall r \geqslant 0, \quad \lambda(r,p) \leqslant \Lambda_+ \begin{cases}
e^{(\frac{k_p}{2}-(a-1)(2n+1-k_p))r} & \text{if } \frac{1}{2} < a < \frac{3}{2} \\
(r+1)^{2n+1-k_p} e^{(k_p-n-\frac{1}{2})r} & \text{if } a = \frac{3}{2}, \\
e^{(k_p-n-\frac{1}{2})r} & \text{if } a> \frac{3}{2}.
\end{cases}
\end{equation}
\item[•] If $\eta|_p \neq 0$, then there exists a constant $\Lambda_+ = \Lambda_+(p)>0$ independent of $r$ such that
\begin{equation*}
\forall r \geqslant 0, \quad \lambda(r,p) \leqslant \Lambda_+ \begin{cases}
e^{(\frac{k_p+1}{2}-(a-1)(2n+1-k_p))r} & \text{if } \frac{1}{2} < a < \frac{3}{2} \\
(r+1)^{2n+1-k_p} e^{(k_p-n)r} & \text{if } a = \frac{3}{2}, \\
e^{(k_p-n)r} & \text{if } a> \frac{3}{2}.
\end{cases}
\end{equation*}
\end{itemize}
\end{proposition}

\begin{proof}
We first show the case where $\eta|_p = 0$.
Without loss of generality, one can assume that $\{\eta^1|_p,\ldots,\eta^{k_p}|_p\}$ generates the family $\{\eta|_p,\eta^1|_p,\ldots,\eta^{2n}|_p\}$.
Let $\{v_1,\ldots,v_{2n+1}\}$ be a basis of $T_p\dK$ such that $\eta^i|_p(v_j) = \delta^i_j$ for $(i,j) \in \{1,\ldots,k_p\}^2$, and  $v_{k_p+1},\ldots,v_{2n+1} \in \cap_{i=1}^{k_p} \ker \eta^i|_p$.
The volume density function is given by the relation
\begin{equation*}
\forall(r,p) \in \R_+\times \dK,\quad \lambda(r,p) = |\det \dx E(r,p)|,
\end{equation*}
where the determinant is taken in any orthonormal bases.
It follows that
\begin{equation*}
\forall r \geqslant 0,\quad \lambda(r,p) = \frac{|\det (\delr, Y_1,\ldots, Y_{2n+1})|}{|\det (\nu,v_1,\ldots,v_{2n+1})|},
\end{equation*}
where $Y_j = Y_{v_j}(\gamma_p(r))$, and all determinants are taken in orthonormal bases.
Hadamard's inequality on determinants now yields
\begin{equation*}
\forall r \geqslant 0,\quad \lambda(r,p) \leqslant \frac{1}{|\det(\nu,v_1,\ldots,v_{2n+1})|}\prod_{i=1}^{2n+1} \|Y_i\|_g.
\end{equation*}
According to Proposition \ref{prop:Y_v-Z_v}, there exists $C>0$ such that
\begin{equation*}
\forall i \in \{k_p+1,2n+1\},\quad \|Y_i\|_g \leqslant C \|v_i\|_g \begin{cases}
e^{-(a-1)r} & \text{if } \frac{1}{2} < a < \frac{3}{2}, \\
(r+1)e^{-\frac{r}{2}} & \text{if } a = \frac{3}{2}, \\
e^{-\frac{r}{2}} &\text{if } a >  \frac{3}{2},
\end{cases}
\end{equation*}
while Lemma \ref{lem:important_estimates} provides the existence of $c>0$ such that
\begin{equation*}
\forall i \in \{1,\ldots,k_p\},\quad \|Y_i\|_g \leqslant c\|v_i\|_ge^{\frac{r}{2}}.
\end{equation*}
The claimed upper bound \eqref{eq:claimed_upper_bound} now holds with $\Lambda_+ = C^{2n+1-k_p}c^{k_p}\frac{\prod_{i=1}^{2n+1}\|v_i\|_g}{|\det(v_1,\ldots,v_{2n+1})|}$.

In case $\eta|_p \neq 0$, the proof is similar.
Without loss of generality, one can assume that $\{\eta|_p,\eta^1|_p,\ldots,\eta^{k_p-1}|_p\}$ generates the whole family.
In that case, the first vector of the considered basis will have growth of order at most $e^r$, hence the extra $\frac{r}{2}$ term in the exponent of the upper bound.
\end{proof}

\subsection{The local coframe}

We shall now compare the two bounds on the volume growth given by Propositions \ref{prop:volume_lower_bound} and \ref{prop:volume_upper_bound}.
Note that these two bounds have been derived from considerations on the curvature of different nature.

\begin{proposition}\label{prop:coframe}
Let $(M^{2n+2},g,J)$ be an \ALCH manifold of order $a>1$, with an essential subset $K\subset M$, such that the sectional curvature of $\bar{M\setminus K}$ is non-positive.
Then the continuous $1$-form $\eta$ is non-vanishing.
Furthermore, if $\{\delr,\Jdelr,E_1,\ldots,E_{2n}\}$ is a radially parallel orthonormal frame on a cylinder $E(\R_+\times U)$, then $\{\eta,\eta^1,\ldots,\eta^{2n}\}$ is a continuous coframe on $U$.
\end{proposition}

\begin{proof}
Let $p\in \dK$ and let $\{\delr,\Jdelr,E_1,\ldots,E_{2n}\}$ be a radially parallel orthonormal frame on a cylinder $E(\R_+\times U)$, with $p\in U \subset \dK$.
Let $\{\eta,\eta^1,\ldots,\eta^{2n}\}$ be the continuous local 1-forms on $U$ given by Proposition \ref{prop:convergence_etar}, and $k_p$ be the rank of this family as linear forms on $T_p\dK$.
Let $\varepsilon \in (0,\min\{a-1,\frac{1}{2}\})$.
Propositions \ref{prop:volume_lower_bound} and \ref{prop:volume_upper_bound} yield the existence of $r_0=r_0(\varepsilon,a,C_0)>0$, $\Lambda_-=\Lambda_-(\varepsilon,a,C_0)>0$ and $\Lambda_+=\Lambda_+(a,C_0,p)>0$ such that
\begin{itemize}
\item If $\eta|_p = 0$, then for $r\geqslant r_0$, we get
\begin{equation}
\label{eq:etap=0}
\Lambda_- e^{(n+\frac{1}{2}-\varepsilon)r}\leqslant \lambda(r,p) \leqslant \Lambda_+ \begin{cases}
e^{(\frac{k_p}{2}-(a-1)(2n+1-k_p))r} & \text{if } 1 < a < \frac{3}{2}, \\
(r+1)^{2n+1-k_p} e^{(k_p-n-\frac{1}{2})r} & \text{if } a = \frac{3}{2}, \\
e^{(k_p-n-\frac{1}{2})r} & \text{if } a> \frac{3}{2}.
\end{cases}
\end{equation}
\item If $\eta|_p \neq 0$, then for $r\geqslant r_0$, we get
\begin{equation}
\label{eq:etapneq0}
\Lambda_- e^{(n+\frac{1}{2}-\varepsilon)r}\leqslant \lambda(r,p) \leqslant\Lambda_+ \begin{cases}
e^{(\frac{k_p+1}{2}-(a-1)(2n+1-k_p))r} & \text{if } 1 < a < \frac{3}{2}, \\
(r+1)^{2n+1-k_p} e^{(k_p-n)r} & \text{if } a = \frac{3}{2}, \\
e^{(k_p-n)r} & \text{if } a> \frac{3}{2}.
\end{cases}
\end{equation}
\end{itemize}
By contradiction, assume that $k_p < 2n+1$.
Then if $\eta|_p = 0$, a straightforward asymptotic comparison of the lower and upper bounds of \eqref{eq:etap=0} yields
\begin{equation*}
n+\frac{1}{2}- \varepsilon \leqslant \begin{cases}
n - (a-1) & \text{if } 1<a<\frac{3}{2}, \\
n - \frac{1}{2} & \text{if } a \geqslant \frac{3}{2},
\end{cases}
\end{equation*}
whereas if $\eta|_p \neq 0$, the same study in \eqref{eq:etapneq0} yields
\begin{equation*}
n+\frac{1}{2} - \varepsilon \leqslant \begin{cases}
n +\frac{1}{2}- (a-1) & \text{if } 1<a<\frac{3}{2} \\
n  & \text{if } a \geqslant \frac{3}{2}.
\end{cases}
\end{equation*}
Since $\varepsilon \in (0,\min\{a-1,\frac{1}{2}\})$, all cases lead to a contradiction.
It follows that $k_p = 2n+1$.
Thus, $\{\eta|_p,\eta^1|_p,\ldots,\eta^{2n}|_p\}$ is a linearly independent family.
This being true for all $p \in U$, $\{\eta,\eta^1,\ldots,\eta^{2n}\}$ is a coframe, and in particular, $\eta$ does not vanish on $U$.
The result follows.
\end{proof}

\begin{remark}
Notice that our technique would not have allowed us to draw a conclusion in the limit case $a=1$ when $\eta|_p\neq 0$.
\end{remark}

\begin{corollary}\label{cor:norm_Yu_greater}
Under the assumptions of Proposition \ref{prop:coframe}, there exists a constant $c>0$ such that
\begin{equation*}
\forall (p,v)\in T\dK, \forall r \geqslant 0,\quad \|Y_v\|_g \geqslant c\|v\|_ge^{\frac{r}{2}}.
\end{equation*}
\end{corollary}

\begin{proof}
For $r\geqslant 0$, we define a field of quadratic forms $Q_r$ on $\dK$ by the relation
\begin{equation*}
Q_r(v) = e^{-2r}g(Y_v,\Jdelr)^2 + e^{-r}\|Y_v^{\perp}\|_g^2,
\end{equation*}
where $Y_v^{\perp}$ denotes the orthogonal projection of $Y_v$ onto $\{\delr,\Jdelr\}^{\perp}$.
In a radially parallel orthonormal frame, $Q_r$ reads $Q_r = {\eta_r}^2 + \sum_{j=1}^{2n}{(\eta^j_r)}^2$.
It follows from Proposition \ref{prop:convergence_etar} that $Q_r$ locally uniformly converges on $\dK$, to a field of quadratic forms $Q_{\infty}$.
Since $\dK$ is compact, the convergence is uniform.
The limit locally reads $Q_{\infty} = \eta\otimes \eta + \sum_{j=1}^{2n} \eta^j\otimes \eta^j$, which is then positive definite by Proposition \ref{prop:coframe}.
Let $U\dK\subset T\dK$ be the unit sphere bundle.
Then $(r,u)\in [0,+\infty]\times U\dK \mapsto Q_r(u)$ is continuous on the compact set $[0,+\infty]\times U\dK$ and achieves its minimum at a point $(r_{\min},u_{\min})$.
Since $Q_{r_{\min}}$ is positive definite, this minimum is positive.
It follows by homogeneity that for any tangent vector $v$ on $\dK$, it holds
\begin{equation*}
\|Y_v\|_g^2 \geqslant e^r Q_r(v) \geqslant e^r \|v\|_g^2Q_{r_{\min}}(u_{\min}).
\end{equation*}
The result then follows.
\end{proof}

In particular, if $(g_r)_{r\geqslant 0}$ denotes the family of Riemannian metrics induced by the normal exponential chart on $\dK$ and if $(M,g,J)$ is \ALCH of order $a>1$, then there exists $c,C>0$ such that $ce^r g_0 \leqslant g_r \leqslant Ce^{2r}g_0$.

\subsection{Asymptotic development of the metric}\label{section:thmA}

We are now able to state again our first Theorem.

\begin{thmx}\label{thm:continuous}
Let $(M^{2n+2},g,J)$ be an \ALCH manifold of order $a>1$ with an essential subset $K$, such that the sectional curvature of $\bar{M\setminus K}$ is negative.
Then there exist a continuous non-vanishing $1$-form $\eta$ and a continuous Carnot-Carathéodory metric $\gamma_H$ on $\dK$, such that
\begin{equation*}
E^*g = \dx r^2 + e^{2r} \eta\otimes \eta + e^r \gamma_H+ h_r,
\end{equation*}
where $(h_r)_{r\geqslant 0}$ is a smooth family of symmetric bilinear forms on $\dK$ such that there exists $C>0$ with
\begin{equation*}
\forall r \geqslant 0, \forall u,v,\quad |h_r(u,v)| \leqslant C\|u\|_g\|v\|_g \begin{cases}
e^{(2-a)r} & \text{if } 1<a<\frac{3}{2}, \\
(r+1)e^{\frac{r}{2}} & \text{if } a = \frac{3}{2},\\
e^{\frac{r}{2}} & \text{if } a > \frac{3}{2}.
\end{cases}
\end{equation*}
Moreover, $\gamma_H$ is a continuous positive-definite metric on $H=\ker \eta$.
\end{thmx}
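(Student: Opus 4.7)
The plan is to directly unfold $E^*g$ in a radially parallel orthonormal frame and read off the three pieces from the asymptotics already established in Propositions \ref{prop:convergence_etar} and \ref{prop:coframe}. First, since $E(r,\cdot)_*v = Y_v(\gamma_p(r))$ for $v \in T_p\dK$, and $Y_v \perp \delr$ by Lemma \ref{lem:Y_v}(3), the pullback decomposes as $E^*g = \dx r^2 + g_r$ with $g_r(u,v) = g(Y_u,Y_v)$ on each level set $\{r\}\times \dK$. Substituting the expansion \eqref{eq:component_jacobi} of $Y_v$ in a frame $\{\delr,\Jdelr,E_1,\ldots,E_{2n}\}$ and using orthonormality gives the local identity
\begin{equation*}
g_r(u,v) = e^{2r}\eta_r(u)\eta_r(v) + e^{r}\sum_{j=1}^{2n}\eta_r^j(u)\eta_r^j(v).
\end{equation*}

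Next I would construct $\gamma_H$ globally. Locally, set $\gamma_H = \sum_{j=1}^{2n}\eta^j\otimes\eta^j$. A change of radially parallel orthonormal frame amounts to applying an orthogonal matrix $O$ on $\dK$; since parallel transport along radial geodesics preserves orthonormality, $O$ is independent of $r$, and the $\eta_r^j$ transform by the same orthogonal matrix, so $\sum_j\eta^j\otimes\eta^j$ is frame-independent and patches into a global continuous field of symmetric positive semi-definite bilinear forms on $\dK$. Positive-definiteness on $H=\ker\eta$ follows from Proposition \ref{prop:coframe}: if $v\in H_p\setminus\{0\}$, then since $\{\eta|_p,\eta^1|_p,\ldots,\eta^{2n}|_p\}$ is a basis of $T_p^*\dK$, at least one $\eta^j|_p(v)$ must be nonzero, whence $\gamma_H|_p(v,v)>0$.

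Finally I would estimate the remainder
\begin{equation*}
h_r = g_r - e^{2r}\eta\otimes\eta - e^r\gamma_H = e^{2r}\bigl(\eta_r\otimes\eta_r - \eta\otimes\eta\bigr) + e^r\sum_{j=1}^{2n}\bigl(\eta_r^j\otimes\eta_r^j - \eta^j\otimes\eta^j\bigr).
\end{equation*}
Using the polarization $\alpha\otimes\alpha - \beta\otimes\beta = \alpha\otimes(\alpha-\beta) + (\alpha-\beta)\otimes\beta$, combined with the uniform bounds on $|\eta_r|$ and $|\eta_r^j|$ from Corollary \ref{cor:uniform_bound_etai} and \eqref{eq:uniform_bound_phi}, and with the sharp convergence rates for $|\eta_r-\eta|$ and $|\eta_r^j-\eta^j|$ supplied by Proposition \ref{prop:convergence_etar}, one bounds the first summand by $C\|u\|_g\|v\|_g$ times $e^{(2-a)r}$, $(r+1)e^{r/2}$ or $e^{r/2}$ in the three regimes $1<a<\tfrac{3}{2}$, $a=\tfrac{3}{2}$, $a>\tfrac{3}{2}$ respectively, and the second summand by the same prefactor times $e^{(3/2-a)r}$, $(r+1)$ or $1$. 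Since $(2-a)>(3/2-a)$ in every regime, the first summand dominates and yields exactly the claimed estimate. The one genuine technical point is the frame-independence of $\gamma_H$; once that orthogonal-invariance check is in place, the three-regime bookkeeping is automatic because the $e^{2r}\eta\otimes\eta$ contribution always dictates the size of $h_r$.
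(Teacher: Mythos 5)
Your proposal is correct and follows essentially the same route as the paper: the same decomposition $E^*g = \dx r^2 + e^{2r}\eta\otimes\eta + e^r\gamma_H + h_r$ with $\gamma_H$ locally equal to $\sum_j \eta^j\otimes\eta^j$, positive-definiteness on $H$ from the coframe property of Proposition \ref{prop:coframe}, and the three-regime bound on $h_r$ from the convergence rates of the component $1$-forms. The only (harmless) presentational differences are that you estimate $h_r$ directly from $|\eta_r-\eta|$ and $|\eta^j_r-\eta^j|$ (Proposition \ref{prop:convergence_etar}) where the paper routes the same estimate through the asymptotic vector fields $Z_v$ (Proposition \ref{prop:Y_v-Z_v}), and that you establish frame-independence of $\gamma_H$ by an orthogonal change-of-frame argument where the paper obtains it by defining $\gamma_H$ as the global limit $\lim_{r\to+\infty} e^{-r}(g_r - e^{2r}\eta\otimes\eta)$.
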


\begin{proof}
Let us first notice that Gauss Lemma yields the existence of a smooth family of metrics $(g_r)_{r\geqslant 0}$ on $\dK$ such that in the normal exponential chart, the metric reads $E^*g = \dx r ^2 + g_r$.
Let us fix $p  \in \dK$.
Let $U\subset \dK$ be an open neighbourhood of $p$ on which is defined the local coframe $\{\eta,\eta^1,\ldots,\eta^{2n}\}$ given by Proposition \ref{prop:coframe}.
Let $u$ and $v$ be two tangent vectors at $p$.

First, notice that $g_r(u,v) = g(Y_u,Y_v)$, where $Y_u$ and $Y_v$ are defined in subsection \ref{subsec:notations}.
Second, notice that
\begin{equation*}
g(Z_u,Z_v) = e^{2r}\eta(u)\eta(v)+e^r\sum_{j=1}^{2n}\eta^j(u)\eta^j(v),
\end{equation*}
where $Z_u$ and $Z_v$ are defined in Definition \ref{def:Z}.
Define $h_r$ by the relation
\begin{equation*}
h_r(u,v) = g(Z_u,Y_v-Z_v) + g(Y_u-Z_u,Z_v)+g(Y_u-Z_u,Y_v-Z_v),
\end{equation*}
It follows from the triangle inequality, Cauchy-Schwarz inequality, Proposition \ref{prop:Y_v-Z_v} and Lemma \ref{lem:important_estimates}, that there exists a constant $C>0$ independent of $r$, $p$, $u$ and $v$ such that
\begin{equation}
\label{eq:h_r}
|h_r(u,v)| \leqslant C\|u\|_g\|v\|_g \begin{cases}
e^{(2-a)r} & \text{if } 1<a<\frac{3}{2}, \\
(r+1)e^{\frac{r}{2}} & \text{if } a = \frac{3}{2},\\
e^{\frac{r}{2}} & \text{if } a > \frac{3}{2}.
\end{cases}
\end{equation}
Let $\gamma_H$ be defined by the relation $\gamma_H = \lim_{r\to +\infty} e^{-r}\left( g_r - e^{2r}\eta\otimes \eta \right)$.
Notice that
\begin{equation*}
g_r = e^{2r}\eta\otimes \eta + e^r \sum_{j=1}^{2n} \eta^j\otimes \eta^j + h_r,
\end{equation*}
so that it follows from \eqref{eq:h_r} that $\gamma_H$ locally reads $\gamma_H = \sum_{j=1}^{2n} \eta^j\otimes \eta^j$, and $\gamma_H$ is hence continuous and positive semi-definite.
Since $\{\eta,\eta^1,\ldots,\eta^{2n}\}$ is a local coframe, $\gamma_H$ is thus non-degenerate on $H = \ker \eta$.
This concludes the proof.
\end{proof}

\begin{corollary}
Under the assumptions of Theorem \ref{thm:continuous}, there exists a unique continuous vector field $\xi$ on $\dK$ such that
\begin{enumerate}
\item $\eta(\xi) = 1$,
\item $\gamma_H(\xi,\cdot) = 0$.
\end{enumerate}
\end{corollary}
\begin{proof}
Let us locally define the vector field $\xi$ as the first vector of the dual basis of the local coframe $\{\eta,\eta^1,\ldots,\eta^{2n}\}$.
Then $\xi$ is uniquely characterized by 1. and 2., so that $\xi$  does not depend on the choice of the radially parallel orthonormal frame defining the coframe, and is defined on all of $\dK$.
Since the local coframe is continuous, so is $\xi$.
This concludes the proof.
\end{proof}

\begin{definition}[Canonical elements at infinity]
Under the assumptions of Theorem \ref{thm:continuous}, we call $\eta$ the \textit{canonical $1$-form at infinity}, $H = \ker \eta$ the \textit{canonical distribution at infinity}, $\gamma_H$ the \textit{canonical Carnot-Carathéodory metric at infinity} and $\xi$ the \textit{canonical vector field at infinity}.
\end{definition}
We call these elements canonical because they are global and do not depend on the choice of any radially parallel orthonormal frame.
The analogy with the model case is obvious: $\eta$ corresponds to the contact form $\theta$, $\gamma_H$ corresponds to the Levi form $\gamma$ and $\xi$ corresponds to the Reeb vector field of $\theta$.
Notice that no assumption on $\nabla R$ has been made yet.
In the following section, we shall pursue the study of this analogy, and show that under stronger assumptions on $R$ and $\nabla R$, the canonical elements at infinity have $\mathcal{C}^1$ regularity, and that $\eta$ is indeed a contact form.


\section{The contact structure}
\label{section:contact_structure}
Let $(M,g,J)$ be an \ALCH manifold of order $a>1$, with an essential subset $K$, such that the sectional curvature of $\bar{M\setminus K}$ is non-positive.
We aim to show that the canonical $1$-form $\eta$ is a contact form of class $\mathcal{C}^1$.
To do so, we show that the family of $1$-forms $\{\eta_r\}_{r\geqslant 0}$ defined in Definition \ref{dfn:local-1-forms} converges to $\eta$ in $\mathcal{C}^1$ topology.

Let us first show the following Lemma.

\begin{lemma}\label{lem:L_uetav}
If $u$ and $v$ are vector fields on $\dK$, then
\begin{equation*}
\forall r \geqslant 0, \quad
(\mathcal{L}_u\eta_r)(v) = e^{-r}\big( g(\nabla_{Y_v}Y_u,\Jdelr) + g(Y_v,\nabla_{Y_u}\Jdelr)\big).
\end{equation*}
\end{lemma}
\begin{proof}
By basic properties of the Lie derivative, it holds that
\begin{equation*}
(\mathcal{L}_u\eta_r)(v) = u \cdot \eta_r(v) - \eta_r([u,v]).
\end{equation*}
Hence, the equalities $Y_u=E_*u$ and $Y_v = E_*v$ together with the naturality of the Lie bracket yield
\begin{equation*}
(\mathcal{L}_u\eta_r)(v) = Y_u \cdot e^{-r}g(Y_v,\Jdelr) - e^r g([Y_u,Y_v],\Jdelr).
\end{equation*}
Recall that $Y_u$ and $\delr$ are orthogonal.
Since $\delr$ is the gradient of the distance function $r$, it follows that $Y_u\cdot e^rg(Y_v,\Jdelr) = e^r Y_u\cdot g(Y_v,\Jdelr)$.
The torsion-free property of the Levi-Civita connection concludes the proof.
\end{proof}

Computations show that in order to estimate the asymptotic behaviour of the Lie derivative $(\mathcal{L}_u\eta_r)(v)$, an upper-bound on the norm of $\nabla_{Y_v}Y_u$ is needed.
This upper bound in itself is obtained by controlling the norm of $(\nabla_{Y_v}S)Y_u$.
The following section takes on the tedious task of providing such upper bounds.

\subsection{Order one estimates}

In this section, $u$ and $v$ are fixed vector fields on $\dK$.

\begin{lemma}\label{lem:F^2'}
The following holds
\begin{equation*}
\begin{split}
\frac{1}{2}\delr\|(\nabla_{Y_v}S)Y_u\|_g^2 &= R(\delr,Y_v,SY_u,S(\nabla_{Y_v}S)Y_u) - R(\delr,SY_v,Y_u,(\nabla_{Y_v}S)Y_u)\\
& \quad - R(SY_v,Y_u,\delr,(\nabla_{Y_v}S)Y_u) - R(\delr,Y_u,SY_v,(\nabla_{Y_v}S)Y_u) \\
& \quad - (\nabla_{Y_v}R)(\delr,Y_u,\delr,(\nabla_{Y_v}S)Y_u) - g(S(\nabla_{Y_v}S)Y_u,(\nabla_{Y_v}S)Y_u).
\end{split}
\end{equation*}
\end{lemma}

\begin{proof}
The extension of the covariant derivative to the whole tensor algebra gives the equality $(\nabla_{Y_v}S)Y_u = \nabla_{Y_v}(SY_u) - S \nabla_{Y_v}Y_u$.
It then follows that
\begin{equation}
\label{eq:add}
\Ndr ((\nabla_{Y_v}S)Y_u)) = \Ndr\big( \nabla_{Y_v}(SY_u)\big) - (\Ndr S)\nabla_{Y_v}Y_u - S\Ndr(\nabla_{Y_v}Y_u).
\end{equation}
From Lemma \ref{lem:Y_v}, $[\delr,Y_v]=0$, $\Ndr Y_u = SY_u$ and $\Ndr (SY_u) = -R_{\delr}Y_u$.
It follows from our convention on the Riemann curvature tensor (see \eqref{eq:convention_curv}) that
\begin{equation}
\label{eq:5.1.2}
\begin{split}
\Ndr\big( \nabla_{Y_v}(SY_u)\big) &= -R(\delr,Y_v)SY_u + \nabla_{Y_v}(-R_{\delr}Y_u) \\
&= -R(\delr,Y_v)SY_u - (\nabla_{Y_v}R)(\delr,Y_u)\delr \\
&\quad - R(SY_v,Y_u)\delr - R(\delr,\nabla_{Y_v}Y_u)\delr\\
&\quad - R(\delr,Y_u)SY_v,
\end{split}
\end{equation}
as well as
\begin{equation}
\label{eq:subs1}
\begin{split}
S\Ndr (\nabla_{Y_v}Y_u) &= -SR(\delr,Y_v)Y_u + S\nabla_{Y_v}(SY_u)\\
&= -SR(\delr,Y_v)Y_u + S^2\nabla_{Y_v}Y_u + S\big(\nabla_{Y_v}S)Y_u\big).
\end{split}
\end{equation}
The Riccati equation \eqref{eq:riccati} for $S$ yields
\begin{equation}\label{eq:subs2}
(\Ndr S) \nabla_{Y_v}Y_u = -S^2 \nabla_{Y_y}Y_u - R(\delr,\nabla_{Y_v}Y_u)\delr.
\end{equation}
Inserting \eqref{eq:5.1.2}, \eqref{eq:subs1} and \eqref{eq:subs2} into \eqref{eq:add} now gives
\begin{equation*}
\begin{split}
\Ndr ((\nabla_{Y_v}S)Y_u)) &= SR(\delr,Y_v)Y_u - R(\delr,Y_v)SY_u\\
&\quad - R(SY_v,Y_u)\delr - R(\delr,Y_u)SY_v\\
&\quad - (\nabla_{Y_v}R)(\delr,Y_u)\delr - S(\nabla_{Y_v}S)Y_u.
\end{split}
\end{equation*}
The result now follows from the symmetry of $S$ and the equality
\begin{equation*}
\frac{1}{2}\delr\|(\nabla_{Y_v}S)Y_u\|_g^2 = g(\Ndr ((\nabla_{Y_v}S)Y_u)),(\nabla_{Y_v}S)Y_u).
\end{equation*}
\end{proof}

\begin{proposition}
\label{prop:bound_nablaS}
If in addition $(M,g,J)$ has \ALS property of order $b>0$, then there exists $c>0$ such that
\begin{equation*}
\|(\nabla_{Y_v}S)Y_u\|_g \leqslant c \|u\|_g\|v\|_ge^{2r}.
\end{equation*}
\end{proposition}

\begin{proof}
Let $p\in \dK$.
Define $F(r) = \|((\nabla_{Y_v}S)Y_u)({\gamma_p(r)})\|_g$.
Since the norm is Lipschitz and the entries are smooth, $F$ is locally Lipschitz.
It follows from Rademacher's Theorem that $F$ is almost everywhere differentiable, and Lemma \ref{lem:F^2'} yields the almost everywhere equality
\begin{equation*}
\begin{split}
F' F &= R(\delr,Y_v,SY_u,S(\nabla_{Y_v}S)Y_u) - R(\delr,SY_v,Y_u,(\nabla_{Y_v}S)Y_u)\\
& \quad - R(SY_v,Y_u,\delr,(\nabla_{Y_v}S)Y_u) - R(\delr,Y_u,SY_v,(\nabla_{Y_v}S)Y_u) \\
& \quad - (\nabla_{Y_v}R)(\delr,Y_u,\delr,(\nabla_{Y_v}S)Y_u) - g(S(\nabla_{Y_v}S)Y_u,(\nabla_{Y_v}S)Y_u).
\end{split}
\end{equation*}
Since $S$ is positive, $ g(S(\nabla_{Y_v}S)Y_u,(\nabla_{Y_v}S)Y_u) \geqslant 0$.
Therefore, it follows that
\begin{equation*}
F'F \leqslant  + (4\|R\|_g \|S\|_g + \|\nabla R\|_g) \|Y_v\|_g\|Y_u\|_gF \quad a.e.
\end{equation*}
Recall that $\|R\|_g$  (Lemma \ref{lem:R_bounded}), $\|\nabla R\|_g$ (\ALS assumption) and $\|S\|_g$ (Lemma \ref{prop:norm_S_uniformly_bounded}) are uniformly bounded.
In addition, recall that there exists a constant $c_1>0$ such that $\|Y_u\|_g\leqslant c_1\|u\|_ge^r$ and $\|Y_v\|_g\leqslant c_1\|v\|_ge^r$ (Lemma \ref{lem:important_estimates}).
Hence, there exists a constant $c>0$ such that
\begin{equation*}
F'F \leqslant c\|u\|_g\|v\|_ge^{2r} F \quad a.e,
\end{equation*}
and therefore,
\begin{equation*}
F' \leqslant c\|u\|_g\|v\|_g e^{2r} \quad a.e,
\end{equation*}
which is true even at points where $F$ vanishes since at those points, it achieves a minimum and its derivative thus vanishes.
The result now follows from a straightforward integration.
\end{proof}

\begin{remark}
This bound is sharp, since $g((\nabla_{Y_v}S)Y_u,\delr) = -g(SY_v,SY_u)$ is equivalent to $-\eta(u)\eta(v)e^{2r}$.
It is however worth noticing that it can be proven that the normal component $(\nabla_{Y_v}S)Y_u^{\perp}=(\nabla_{Y_v}S)Y_u - g((\nabla_{Y_v}S)Y_u,\delr)\delr$ can be bounded, if $\min\{a,b\}>\frac{1}{2}$, as follows
\begin{equation*}
\|(\nabla_{Y_v}S)Y_u^{\perp}\|_g \leqslant\tilde{c}\|u\|_g\|v\|_g e^{\frac{3}{2}r}.
\end{equation*}
The proof is more intricate, and relies on the study of the curvature terms.
For example, the terms in $R^0(\delr,Y_v,SY_u,S(\nabla_{Y_v}S)Y_u)$ are products of the form $g(X,\Jdelr)g(Y,JZ)$ with $X$, $Y$, $Z$ normal to $\delr$, which kills the $\delr$ direction.
\end{remark}

We shall now give a bound on the growth of $\nabla_{Y_v}Y_u$.
The proof is slightly different from that of Proposition \ref{prop:bound_nablaS} as it requires the use of Gr\"onwall's inequality.

\begin{proposition}\label{prop:bound_nablaY_vY_u}
Assume that $a>\frac{1}{2}$ and $b>0$.
Then there exists a constant $c>0$ such that
\begin{equation*}
\|(\nabla_{Y_v}Y_u)(\gamma_p(r))\|_g \leqslant c(\|(\nabla_{Y_v}Y_u)(p)\|_g+ \|u\|_g\|v\|_g) e^{2r}.
\end{equation*}
\end{proposition}

\begin{proof}
First, notice that $\Ndr (\nabla_{Y_v}Y_u) = -R(\delr,Y_v)Y_u + \nabla_{Y_v}(SY_u)$ and moreover that $\nabla_{Y_v}(SY_u) = (\nabla_{Y_v}S)Y_u + S\nabla_{Y_v}Y_u$.
Hence
\begin{equation}\label{eq:G'^2}\begin{split}
\frac{1}{2}\delr \|\nabla_{Y_v}Y_u\|_g^2 &= -R(\delr,Y_v,Y_u,\nabla_{Y_v}Y_u) + g((\nabla_{Y_v}S)Y_u,\nabla_{Y_v}Y_u) \\
&\quad  +g(S\nabla_{Y_v}Y_u,\nabla_{Y_v}Y_u).
\end{split}
\end{equation}
Let $p\in \dK$ be fixed and let $G(r) = \|(\nabla_{Y_v}Y_u)({\gamma_p(r)})\|_g$.
Then $G$ is almost everywhere differentiable and the triangle inequality together with Cauchy-Schwarz inequality applied to \eqref{eq:G'^2} yields the almost everywhere inequality
\begin{equation*}
G'G \leqslant \|R\|_g\|Y_u\|_g\|Y_v\|_gG + \|(\nabla_{Y_v}S)Y_u\|_gG + \|S\|_gG^2\quad a.e,
\end{equation*}
from which is deduced the following almost everywhere inequality
\begin{equation*}
G' \leqslant \|R\|_g\|Y_u\|_g\|Y_v\|_g + \|(\nabla_{Y_v}S)Y_u\|_g + \|S\|_gG \quad a.e,
\end{equation*}
which is true even at points where $G$ vanishes since at those points, it achieves a minimum and its derivative thus vanishes.
Estimates on the growth of normal Jacobi fields (Lemma \ref{lem:important_estimates}), a uniform bound on $\|R\|_g$ (Lemma \ref{lem:R_bounded}) together with Proposition \ref{prop:bound_nablaS} give the existence of a constant $C>0$ such that
\begin{equation*}
G' - G \leqslant C\|u\|_g\|v\|_ge^{2r} + (\|S\|_g-1)G \quad a.e.
\end{equation*}
Multiplying both sides by $e^{-r}$ and integrating gives the Gr\"onwall-like inequality
\begin{equation*}
\forall r \geqslant 0,\quad G(r) e^{-r} \leqslant G(0) +C\|u\|_g\|v\|_g (e^r-1) + \int_0^r (\|S\|_g-1)(G(s)e^{-s})\dx s.
\end{equation*}
Finally, recall from Proposition \ref{prop:norm_S_uniformly_bounded} that
\begin{equation*}
\|S\|_g - 1 \leqslant \epsilon_a(r)=C' \begin{cases}
e^{-ar} & \text{if } \frac{1}{2} < a < 2, \\
(r+1)e^{-2r} & \text{if } a = 2,\\
e^{-2r} & \text{if } a >2,
\end{cases}
\end{equation*}
for some constant $C'$ independent of $(r,p,u,v)$.
Hence, Gr\"onwall's inequality yields
\begin{equation*}
\forall r \geqslant 0,\quad G(r)e^{-r} \leqslant (G(0) + C\|u\|_g\|v\|_ge^r)\exp\left( \int_0^r \epsilon_a(s)\dx s\right).
\end{equation*}
It follows that there exists $c>0$ such that
\begin{equation*}
\|(\nabla_{Y_v}Y_u)(\gamma_p(r))\|_g \leqslant c(\|(\nabla_{Y_v}Y_u)(p)\|_g + \|u\|_g\|v\|_g)e^{2r}.
\end{equation*}
\end{proof}

\begin{remark}
\begin{itemize}
\item This bound is sharp since $g(\nabla_{Y_v}Y_u,\delr) = - g(SY_v,Y_u)$ is equivalent to $-\eta(u)\eta(v)e^{2r}$.
Similarly to Proposition \ref{prop:bound_nablaS}, it is possible to give a sharper bound on the normal component, of order $e^{\frac{3}{2}r}$.
\item Since $\nabla_{Y_v}Y_u$ is not tensorial in $u$, one cannot hope to find a constant $c>0$ independent of $u$ such that $\|\nabla_{Y_v}Y_u\|_g \leqslant c \|u\|_g\|v\|_ge^{2r}$.
However, when $u$ and $v$ are fixed vector fields, the constant in front of the exponential term is continuous with respect to $p$, and hence uniformly bounded on all compact subsets on which it is defined.
\end{itemize}
\end{remark}

\subsection{The contact form}

From now on, we assume that $(M,g,J)$ is \ALCH and \ALS of orders $a>1$ and $b>0$, and also that the sectional curvature of $\bar{M\setminus K}$ is negative.
In this section, we prove that the canonical 1-form at infinity is of class $\mathcal{C}^1$ and is contact.
We first show the following computational Proposition.

\begin{proposition}\label{prop:convergence_to_alpha}
Assume that $\min\{a,b\}>1$.
Let $u$ and $v$ be local tangent vector fields on $\dK$.
Define
\begin{equation*}
f(r,p)  = g(\nabla_{Y_v}Y_u,\Jdelr) + g(\nabla_{Y_u}\Jdelr,Y_v).
\end{equation*}
Then there exists a constant $c>0$ independent of $(r,p,u,v)$ and a continuous function $\alpha(p)$ such that
\begin{equation*}
|f(r,p)-{e^r}\alpha(p) |\leqslant c(\|u\|_g\|v\|_g + \|\nabla_{Y_v}Y_u\|_g)\begin{cases}
e^{(2-\min\{a,b\})r} & \text{if } 1<\min\{a,b\} < 3,\\
(r+1)e^{-r} & \text{if } \min\{a,b\} ={3}, \\
e^{-r} & \text{if } \min\{a,b\}>3.
\end{cases}
\end{equation*}
\end{proposition}

\begin{proof}
Our convention on the curvature yields $\Ndr (\nabla_{Y_v}Y_u) = -R(\delr,Y_v)Y_u+ \nabla_{Y_v}(SY_u)$ and $\Ndr (\nabla_{Y_u}\Jdelr) = -R(\delr,Y_u)\Jdelr$.
Therefore,
\begin{equation*}
\begin{split}
\delr f &= -R(\delr,Y_v,Y_u,\Jdelr) + g(\nabla_{Y_v}(SY_u),\Jdelr)\\
&\quad -R(\delr,Y_u,\Jdelr,Y_v) + g(\nabla_{Y_u}\Jdelr,SY_v),
\end{split}
\end{equation*}
which turns out, by the Bianchi identity, to be equal to the following
\begin{equation}\label{eq:delrf}
\delr f = R(\delr,\Jdelr,Y_v,Y_u) + g(\nabla_{Y_v}(SY_u),\Jdelr) + g(\nabla_{Y_u}\Jdelr,SY_v).
\end{equation}
Notice that
\begin{equation*}\begin{split}
\Ndr (\nabla_{Y_v}(SY_u)) &= -R(\delr,Y_v)SY_u + \nabla_{Y_v}(\Ndr(SY_u))\\
&= -R(\delr,Y_v)SY_u + \nabla_{Y_v}(-R_{\delr}Y_u)\\
&= -R(\delr,Y_v)SY_u - (\nabla_{Y_v}R)(\delr,Y_u)\delr - R(SY_v,Y_u)\delr\\
&\quad - R(\delr,\nabla_{Y_v}Y_u)\delr - R(\delr,Y_u)SY_v.
\end{split}
\end{equation*}
It then follows that
\begin{equation}\label{eq:messed_up} \begin{split}
\delr \delr f &= (\Ndr R)(\delr,\Jdelr,Y_v,Y_u) + R(\delr,\Jdelr,SY_v,Y_u) \\
&\quad + R(\delr,\Jdelr,Y_v,SY_u) -R(\delr,Y_v,SY_u,\Jdelr)\\
&\quad  - (\nabla_{Y_v}R)(\delr,Y_u,\delr,\Jdelr) - R(SY_v,Y_u,\delr,\Jdelr)\\
&\quad -R(\delr,\nabla_{Y_v}Y_u,\delr,\Jdelr) -R(\delr,Y_u,SY_v,\Jdelr) \\
&\quad -R(\delr,Y_u,\Jdelr,SY_v) -R(\delr,Y_v,\delr,\nabla_{Y_u}\Jdelr).
\end{split}
\end{equation}
Note that $R(\delr,\Jdelr,SY_v,Y_u) - R(SY_v,Y_u,\delr,\Jdelr)=0$ because of the symmetry of $R$.
Similarly, notice that $-R(\delr,Y_u,SY_v,\Jdelr) - R(\delr,Y_u,\Jdelr,SY_v) = 0$.
Hence, equation \eqref{eq:messed_up} becomes

\begin{equation*}
\begin{split}
\delr\delr f &= (\Ndr R) (\delr,\Jdelr,Y_v,Y_u) - (\nabla_{Y_v} R) (\delr,Y_u,\delr,\Jdelr)\\
& \quad + R(\delr,\Jdelr,Y_v,SY_u) - R(\delr,Y_v,SY_u,\Jdelr)\\
& \quad -R(\delr,\nabla_{Y_v}Y_u,\delr,\Jdelr) - R(\delr,Y_v,\delr,\nabla_{Y_u}\Jdelr).
\end{split}
\end{equation*}
Let $\mathbf{k^0}$ be defined as
\begin{equation*}\begin{split}
\mathbf{k^0} &= R^0(\delr,\Jdelr,Y_v,SY_u) - R^0(\delr,Y_v,SY_u,\Jdelr)\\
& \quad -R^0(\delr,\nabla_{Y_v}Y_u,\delr,\Jdelr) - R^0(\delr,Y_v,\delr,\nabla_{Y_u}\Jdelr).
\end{split}
\end{equation*}
From Lemma \ref{appendix:thmB1} in the Appendix, it holds that
\begin{equation*}\begin{split}
\mathbf{k}^0 &= -\frac{1}{2}g(SY_u,JY_v) -\frac{1}{4}g(SY_u,JY_v) + g(\nabla_{Y_v}Y_u,\Jdelr) + \frac{1}{4}g(Y_v,\nabla_{Y_u}\Jdelr)\\
&= -\frac{3}{4}g(SY_u,JY_v) + \frac{1}{4}g(Y_v,\nabla_{Y_u}\Jdelr) + g(\nabla_{Y_v}Y_u,\Jdelr).
\end{split}
\end{equation*}
Note that $g(SY_u,JY_v) = g(\nabla_{Y_u}\delr,JY_v) = -g(J\nabla_{Y_u}\delr,Y_v)=-g(\nabla_{Y_u}\Jdelr,Y_v)$.
It surprisingly turns out that
\begin{equation*}
\mathbf{k}^0 = g(\nabla_{Y_v}Y_u,\Jdelr) + g(\nabla_{Y_u}\Jdelr,\Jdelr) = f.
\end{equation*}
Hence, $f$ a is solution to the second order linear differential equation
\begin{equation*}
\delr\delr f - f = h,
\end{equation*}
where $h$ is given by
\begin{equation}\label{eq:h}
\begin{split}
h&= (\Ndr R) (\delr,\Jdelr,Y_v,Y_u) - (\nabla_{Y_v} R) (\delr,Y_u,\delr,\Jdelr)\\
& \quad + (R-R^0)(\delr,\Jdelr,Y_v,SY_u) - (R-R^0)(\delr,Y_v,SY_u,\Jdelr)\\
& \quad -(R-R^0)(\delr,\nabla_{Y_v}Y_u,\delr,\Jdelr) - (R-R^0)(\delr,Y_v,\delr,\nabla_{Y_u}\Jdelr).
\end{split}
\end{equation}
By classical ODE theory, $f$ reads
\begin{equation*}
f = \frac{e^r}{2}\big({f(0)+\delr f(0)} +\int_0^re^{-s}h(s)\dx s\big) +\frac{e^{-r}}{2}\big(f(0) -\delr f(0) - \int_0^r e^s h(s) \dx s \big).
\end{equation*}
Since $\nabla_{Y_u}\Jdelr = JSY_u$, it holds that $\|JSY_u\|_g = \|SY_u\|_g \leqslant \|S\|_g\|Y_u\|_g$, and equation \eqref{eq:h} yields the following upper bound on $h$
\begin{equation*}
|h| \leqslant (3\|R-R^0\|_g\|S\|_g + 2\|\nabla R\|_g)\|Y_u\|_g\|Y_v\|_g + \|R-R^0\|_g\|\nabla_{Y_v}Y_u\|_g.
\end{equation*}
From the normal Jacobi estimates, the uniform bound on $\|S\|_g$, the \ALCH and \ALS conditions and Proposition \ref{prop:bound_nablaY_vY_u}, it finally holds that there exists $c>0$ such that
\begin{equation}\label{eq:bound_h}
|h| \leqslant c(\|u\|_g\|v\|_g + \|(\nabla_{Y_v}Y_u)(p)\|_g) e^{(2-\min\{a,b\})r}.
\end{equation}
Let $\alpha(p)$ be defined as $\alpha(p) = \frac{1}{2}\big(f(0,p) + \delr f(0,p) + \int_0^{+\infty} e^{-s}h(s,p)\dx s\big)$, which is well defined by \eqref{eq:bound_h}, and is continuous by the dominated convergence theorem applied on all compact subset where $u$ and $v$ are defined.
Hence,
\begin{equation*}\begin{split}
|f(r,p)-\alpha(p)e^r| &\leqslant \frac{e^r}{2}\int_r^{+\infty}e^{-s}|h(s,p)|\dx s \\&\quad + \frac{e^{-r}}{2}(|f(0,p)| + |\delr f(0,p)| + \int_0^r e^{s}|h(s,p)| \dx s).
\end{split}
\end{equation*}
Finally, notice that $|f(0,p)| = |g((\nabla_{Y_v}Y_u)(p),\Jdelr)| \leqslant \|(\nabla_{Y_v}Y_u)(p)\|_g$.
It now follows from \eqref{eq:delrf} that
\begin{equation*}\begin{split}
|\delr f(0,p)|  &= |   R(\nu(p),J\nu(p),v,u) + g(((\nabla_{Y_v}S)_p) u,\Jdelr)\\
&\quad + g(S(\nabla_{Y_v}Y_u)(p),\Jdelr) + g(JSu,Sv)|\\
&\leqslant \|R\|_g\|v\|_g\|u\|_g + \|(\nabla S)_p\|_g\|u\|_g\|v\|_g \\
&\quad + \|S\|_g\|(\nabla_{Y_v}Y_u)(p)\|_g + \|S\|_g\|u\|_g\|v\|_g.
\end{split}
\end{equation*}
The quantities $\|R\|_g$, $\|S\|_g$, and $\|\nabla S\|_g$ are uniformly bounded on the compact set $\dK$.
Moreover, \eqref{eq:bound_h} yields
\begin{equation*}
\int_r^{+\infty} e^{-s}|h(s,p)|\dx s \leqslant c(\|u\|_g\|v\|_g + \|(\nabla_{Y_v}Y_u)(p)\|_g) \frac{e^{(1-\min\{a,b\})r}}{\min\{a,b\}-1},
\end{equation*}
and
\begin{equation*}
\int_0^r e^s |h(s,p)|\dx s \leqslant c(\|u\|_g\|v\|_g + \|(\nabla_{Y_v}Y_u)(p)\|_g)\begin{cases}
\frac{e^{(3-\min\{a,b\})r}-1}{3-\min\{a,b\}} & \text{if } \min\{a,b\} \neq 3, \\
r & \text{otherwise}.
\end{cases}
\end{equation*}
The result then follows.
\end{proof}

We shall now prove our second Theorem, which we restate for the reader's convenience.

\begin{thmx}\label{thm:C1}
Let $(M,g,J)$ be an \ALCH and \ALS manifold of order $a$ and $b$, with an essential subset $K$, such that the sectional curvature of $\bar{M\setminus K}$ is negative.
If $\min\{a,b\}>1$, then the canonical $1$-form at infinity $\eta$ is a contact form of class $\mathcal{C}^1$, with Reeb vector field $\xi$.
\end{thmx}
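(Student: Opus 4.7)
The plan is to upgrade the $\mathcal{C}^0$-convergence $\eta_r \to \eta$ from Proposition \ref{prop:convergence_etar} to a $\mathcal{C}^1$-convergence using Proposition \ref{prop:convergence_to_alpha}, and then to read off the contact structure by computing $d\eta$ explicitly in the limit on a radially parallel orthonormal frame.

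First, I will show $\eta \in \mathcal{C}^1$. On any coordinate chart $(x^i)$ of $\dK$ with commuting smooth coordinate vector fields $\partial_i$, Lemma \ref{lem:L_uetav} rewrites $\partial_j(\eta_r(\partial_i))(p) = (\mathcal{L}_{\partial_j}\eta_r)(\partial_i)(p) = e^{-r} f(r,p)$, which by Proposition \ref{prop:convergence_to_alpha} converges uniformly on compact sets, at rate $\mathcal{O}(e^{-(\min\{a,b\}-1)r})$, to a continuous limit $\alpha_{ji}(p)$ --- this is where the strict inequality $\min\{a,b\}>1$ enters. Combined with the uniform convergence of $\eta_r(\partial_i)$ from Proposition \ref{prop:convergence_etar}, the classical criterion for the $\mathcal{C}^1$-regularity of uniform limits yields $\eta \in \mathcal{C}^1$ and $d\eta_r \to d\eta$ uniformly on compact sets.

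Next, I will compute $d\eta$ explicitly. Since $\delr = \nabla r$ and $Y_u \perp \delr$, one has $Y_u \cdot e^{-r} = 0$; combined with $Y_{[u,v]} = [Y_u, Y_v]$ and the torsion-free property of $\nabla$, a direct computation gives
\begin{equation*}
d\eta_r(u,v) = e^{-r}\bigl(g(Y_v, JSY_u) - g(Y_u, JSY_v)\bigr).
\end{equation*}
In a radially parallel orthonormal frame, I will then substitute $Y_w$ by $Z_w$ and $SY_w$ by $Z'_w$; the approximation errors, controlled by Proposition \ref{prop:Y_v-Z_v} against the $e^r$-norms of $Z_w, Z'_w$, decay after the $e^{-r}$ rescaling precisely because $a>1$. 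The terms involving the complex line $\Span\{\delr, \Jdelr\}$ then vanish by the orthogonality relations $g(\Jdelr,\delr)=0$ and $g(\Jdelr, JE_j)=g(\delr, E_j)=0$, and collecting the surviving $E_j$-contributions yields
\begin{equation*}
d\eta|_p = -\!\!\!\sum_{1\leqslant i<j\leqslant 2n} g_p(E_i, JE_j)\, \eta^i \wedge \eta^j.
\end{equation*}

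Finally, I will verify the contact and Reeb conditions. The matrix $\bigl(g_p(E_i, JE_j)\bigr)_{i,j=1,\ldots,2n}$ is the matrix of the restriction of $J$ to the $J$-invariant $2n$-plane $\Span\{\nu, J\nu\}^\perp \subset T_pM$, which squares to $-\Id$ and is therefore non-degenerate. By Proposition \ref{prop:coframe}, $\{\eta^1|_p, \ldots, \eta^{2n}|_p\}$ is a basis of $H_p^*$, so $d\eta|_{H_p}$ is non-degenerate and $\eta \wedge (d\eta)^n$ vanishes nowhere: $\eta$ is a contact form. Since $\xi$ is the first vector of the dual basis of the coframe $\{\eta, \eta^1, \ldots, \eta^{2n}\}$, $\eta^j(\xi) = 0$ for every $j \geqslant 1$, and the explicit formula above gives $d\eta(\xi, \cdot) = 0$; together with $\eta(\xi)=1$, this identifies $\xi$ as the Reeb vector field of $\eta$. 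I expect the main hurdle to be the careful bookkeeping of the errors in the second step --- identifying which products of the $\mathcal{O}(e^{(1-a)r})$ (or better) approximation terms from Proposition \ref{prop:Y_v-Z_v} with the $e^r$-growing $Z$-components still decay after the $e^{-r}$ rescaling --- while the algebraic cancellation of the $\Span\{\delr,\Jdelr\}$-terms (which explains why $d\eta$ depends only on the $\eta^j$'s and not on $\eta$ itself) is purely formal once the frame is fixed.
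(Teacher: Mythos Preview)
Your proposal is correct and follows essentially the same route as the paper: $\mathcal{C}^1$-regularity via Lemma~\ref{lem:L_uetav} and Proposition~\ref{prop:convergence_to_alpha} in coordinates, then the explicit formula $d\eta_r(u,v)=e^{-r}(g(Y_v,JSY_u)-g(Y_u,JSY_v))$ passed to the limit in a radially parallel frame, and finally the Reeb identification from $\eta^j(\xi)=0$. The only cosmetic difference is that the paper picks a $J$-adapted frame ($Je_{2k-1}=e_{2k}$) to compute $(d\eta)^n=n!\,\eta^1\wedge\cdots\wedge\eta^{2n}$ explicitly, whereas you argue non-degeneracy of $d\eta|_H$ directly from the fact that $(g(E_i,JE_j))_{i,j}$ squares to $-\Id$; both are equivalent linear-algebra endgames.
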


\begin{proof}
To show that $\eta$ is of class $\mathcal{C}^1$, it suffices to fix a chart $U\subset \dK$ and to show that $\eta$ is of class $\mathcal{C}^1$ on $U$.
Let $m = 2n+1 = \dim \dK$, $\{x^1,\ldots,x^{m}\}$ be coordinates on $U\subset \dK$, and let $\{\partial_1,\ldots,\partial_{m}\}$ and $\{\dx x^1,\ldots, \dx x^{m}\}$ be the associated tangent frame and coframe.
For $r\geqslant 0$, the $1$-form $\eta_r$ and its partial derivatives locally read
\begin{equation*}
\eta_r  = \sum_{j=1}^{m} (\eta_r)_j \dx x^j \quad \text{and} \quad
\partial_i (\eta_r) = \sum_{i=1}^{m} \partial_i  (\eta_r)_j \dx x^j,
\end{equation*}
with $\partial_i (\eta_r)_j = (\mathcal{L}_{\partial_i} \eta_r)(\partial_j)$.
Write $Y_i = Y_{\partial_i}$ and $Y_j= Y_{\partial_j}$.
Lemma \ref{lem:L_uetav} then states that
\begin{equation*}
\forall i,j\in \{1,\ldots,m\},\quad 	\partial_i(\eta_r)_j = e^{-r}\big( g(\nabla_{Y_j}Y_i,\Jdelr) + g(\nabla_{Y_i}\Jdelr,Y_j)\big).
\end{equation*}
Proposition \ref{prop:convergence_to_alpha} now yields the existence of continuous functions $\alpha_{ij}\colon U \to \R$ and a constant $c>0$ such that if $i,j \in \{1,\ldots,m\}$, then
\begin{equation*}
|\partial_i(\eta_r)_j - \alpha_{ij}| \leqslant c(\|\partial_i\|_g\|\partial_j\|_g + \|\nabla_{\partial_j}\partial_i\|_g ) \begin{cases}
e^{(1-\min\{a,b\})r} & \text{if } 1<\min\{a,b\} < 3,\\
(r+1)e^{-2r} & \text{if } \min\{a,b\} = 3,\\
e^{-2r} & \text{if } \min\{a,b\} >3.
\end{cases}
\end{equation*}
The family $(\partial_i(\eta_r)_j)_{r\geqslant 0}$ then locally uniformly converges to the continuous function $\alpha_{ij}$ on $U$, and it follows that the canonical $1$-form $\eta$ is of class $\mathcal{C}^1$ on $U$, and hence on $\dK$.

We shall now show that $\eta$ is a contact $1$-form.
Since $(\eta_r)_{r\geqslant 0}$ locally converges to $\eta$ in $\mathcal{C}^1$ topology, it holds that
\begin{equation*}
\forall p \in \dK, \forall u,v \in T_p\dK,\quad \dx \eta|_p(u,v) = \lim_{r\to +\infty} \dx \eta_r|_p (u,v).
\end{equation*}
Let $r\geqslant 0$, $p\in \dK$ and $u,v \in T_p \dK$.
Consider smooth local extensions of $u$ and $v$ that are still denoted the same way.
Then
\begin{equation}\label{eq:deta_r}
\begin{split}
\dx \eta_r(u,v)  &=   u \cdot \eta_r(v) - v\cdot \eta_r(u) - \eta_r([u,v]) \\
&= e^{-r}(Y_u \cdot g(Y_v,\Jdelr) - Y_v\cdot g(Y_u,\Jdelr) - g([Y_u,Y_v],\Jdelr))\\
&= e^{-r}g(\nabla_{Y_u}Y_v - \nabla_{Y_v}Y_u - [Y_u,Y_v], \Jdelr)\\
& \quad + e^{-r}(g(Y_v,\nabla_{Y_u}\Jdelr) - g(Y_u,\nabla_{Y_v}\Jdelr))\\
&= e^{-r}(g(Y_v,JSY_u) - g(Y_u,JSY_v)).
\end{split}
\end{equation}
Let $\{\Jdelr,\delr,E_1,\ldots,E_{2n}\}$ be a radially parallel orthonormal frame on a cylinder $E(\R_+\times U)$ with $p\in U$.
Since $e^{-r}g(Y_v,JSY_u) \underset{r\to +\infty}{\longrightarrow} \frac{1}{2}\sum_{i,j=1}^{2n} \eta^i(v)\eta^j(u)g(E_i,\JE_i)$, it follows that
\begin{equation*}
\dx \eta(u,v) = \frac{1}{2}\sum_{i,j=1}^{2n} (\eta^i(v)\eta^j(u) - \eta^i(u)\eta^j(v))g(E_i,\JE_j).
\end{equation*}
Setting $\omega_{ij} = g(E_i,\JE_j)= g(e_i,Je_j)$, which are constants, the latter expression reads
\begin{equation}\label{eq:deta}
\dx \eta = -\frac{1}{2} \sum_{i,j=1}^{2n} \omega_{ij}\,\eta^i\wedge \eta^j.
\end{equation}
Assume furthermore that the local orthonormal frame $\{\nu,e_1,\ldots,e_{2n}\}$ on $U$ is chosen such that we have $Je_{2k-1}=e_{2k}$ for $k\in \{1,\ldots,n\}$.
In that case, the constants $\omega_{ij}$ are given by
\begin{equation*}
\omega_{ij} = \begin{cases}
-1 & \text{if } i=2k-1, j = 2k,\\
1 & \text{if } i= 2k, j= 2k-1,\\
0 & \text{otherwise}.
\end{cases}
\end{equation*}
Equation \eqref{eq:deta} then yields the equality
\begin{equation}\label{eq:wedge}
\dx \eta = \sum_{k=1}^{n} \eta^{2k-1}\wedge \eta^{2k}.
\end{equation}
From this last expression we derive the equality $(\dx\eta)^n =(n!)\, \eta^1\wedge\cdots \wedge \eta^{2n}$, and hence the equality $\eta \wedge (\dx \eta)^n =(n!)\, \eta \wedge \eta^1 \wedge \cdots \wedge \eta^{2n}$.
Since $\{\eta,\eta^1,\ldots,\eta^{2n}\}$ is a local coframe, $\eta\wedge (\dx\eta)^n$ is a volume form on $U$.
It follows that $\eta$ is a contact form.
Since $\dx \eta$ is a linear combination of wedge products of the local $1$-forms $(\eta^i)$, it follows by the very definition of $\xi$ that $\eta(\xi) = 1$ and $\dx \eta (\xi,\cdot) =0$, hence $\xi$ is the Reeb vector field of $\eta$.
The proof is now complete.
\end{proof}

\subsection{Regularity of the Carnot-Carath\'eodory metric}

We shall now study the regularity of the Carnot-Carathéodory metric $\gamma_H$.
To do so, we show that the local differential forms $\eta^1,\ldots, \eta^{2n}$, defined using a radially parallel orthonormal frame, have such regularity.
The condition $\min\{a,b\}>1$ is too weak to ensure that it is of class $\mathcal{C}^1$, since the renormalization of $\eta^j$ is of order $e^{-\frac{r}{2}}$ while that of $ \eta$ is of order $e^{-r}$.
We shall now show that the condition $\min\{a,b\}>\frac{3}{2}$ is sufficient.
The proof is very similar to that of Theorem \ref{thm:C1}.
We first give a bound on the growth of $\nabla_{Y_u}E_j$.

\begin{lemma}\label{lem:bound_nablaY_uX}
Let $(M,g,J)$ be an \ALCH manifold of order $a>\frac{1}{2}$ with an essential subset $K$.
Let $X$ be a radially parallel vector field on $\bar{M\setminus K}$, that is such that $\Ndr X = 0$ and assume that $\|X\|_g=1$.
Then there exists a constant $c>0$ such that
\begin{equation*}
\|\nabla_{Y_u}X\|_g \leqslant c \|u\|_ge^r.
\end{equation*}
\end{lemma}

\begin{proof}
Let $L = \|\nabla_{Y_u}X\|_g$, which is locally Lipschitz and hence almost everywhere differentiable.
Since $\Ndr X=0$, it holds that $\Ndr (\nabla_{Y_u}X) = -R(\delr,Y_u)X$, and therefore, applying $\delr$ to $\frac{1}{2}L^2$ gives the almost everywhere inequality
\begin{equation*}
L'L = -R(\delr,Y_u,X,\nabla_{Y_u}X) \quad a.e.
\end{equation*}
Recall that $\|R\|_g$ is uniformly bounded (Lemma \ref{lem:R_bounded}), and that since $a>\frac{1}{2}$, there exists $c_1>0$ such that $\|Y_v\|_g\leqslant c_1\|u\|_ge^r$ (Lemma \ref{lem:important_estimates}).
Hence, there exists $c>0$ such that
\begin{equation*}
L'L \leqslant c\|u\|_ge^r\|X\|_g L \quad a.e.
\end{equation*}
Since $L$ is non-positive and $\|X\|_g=1$, it follows that
\begin{equation*}
L' \leqslant c\|u\|_ge^r \quad a.e,
\end{equation*}
which is true even at points where $L$ vanishes since at those points, it achieves a minimum and its derivative thus vanishes.
The result follows from a straightforward integration.
\end{proof}
In particular, this Lemma applies to the vectors of a radially parallel orthonormal frame.

\begin{proposition}\label{prop:convergence_to_alphaj}
Let $(M,g,J)$ be an \ALCH and \ALS manifold of order $a$ and $b$, with $\min\{a,b\}>\frac{3}{2}$, with an essential subset $K$.
Let $\{\delr,\Jdelr,E_1,\ldots,E_{2n}\}$ be a radially parallel orthonormal frame on a cylinder $E(\R_+\times U)$.
Let $u$ and $v$ be local vector fields on $\dK$.
For all $j\in \{1,\ldots,2n\}$, define
\begin{equation*}
f^j(r,p)  = g(\nabla_{Y_v}Y_u,E_j) + g(\nabla_{Y_u}E_j,Y_v).
\end{equation*}
Then there exists a constant $c>0$ independent of $(r,p,u,v)$ and a continuous function $\alpha^j(p)$ on $U$ such that
\begin{equation*}
|f^j(r,p)-{e^\frac{r}{2}}\alpha^j(p) |\leqslant c(\|u\|_g\|v\|_g + \|\nabla_{Y_v}Y_u\|_g)\begin{cases}
e^{(2-\min\{a,b\})r} & \text{if } \min\{a,b\} < \frac{5}{2},\\
(r+1)e^{-\frac{r}{2}} & \text{if } \min\{a,b\} = \frac{5}{2},\\
e^{-\frac{r}{2}} & \text{if } \min\{a,b\}>\frac{5}{2}.
\end{cases}
\end{equation*}
\end{proposition}

\begin{proof}
The proof is a strict adaptation of that of Proposition \ref{prop:convergence_to_alpha}.
The exact same computations show that
\begin{equation*}\label{eq:messed_upj} \begin{split}
\delr \delr f^j &= (\Ndr R)(\delr,E_j,Y_v,Y_u) + R(\delr,E_j,SY_v,Y_u) \\
&\quad + R(\delr,E_j,Y_v,SY_u) -R(\delr,Y_v,SY_u,E_j)\\
&\quad  - (\nabla_{Y_v}R)(\delr,Y_u,\delr,E_j) - R(SY_v,Y_u,\delr,E_j)\\
&\quad -R(\delr,\nabla_{Y_v}Y_u,\delr,E_j) -R(\delr,Y_u,SY_v,E_j) \\
&\quad -R(\delr,Y_u,E_j,SY_v) -R(\delr,Y_v,\delr,\nabla_{Y_u}E_j),
\end{split}
\end{equation*}
and the exact same cancellations due to the symmetries of the Riemann tensor yield
\begin{equation}\label{eq:delrdelrfj}
\begin{split}
\delr\delr f^j &= (\Ndr R) (\delr,E_j,Y_v,Y_u) - (\nabla_{Y_v} R) (\delr,Y_u,\delr,E_j)\\
& \quad + R(\delr,E_j,Y_v,SY_u) - R(\delr,Y_v,SY_u,E_j)\\
& \quad -R(\delr,\nabla_{Y_v}Y_u,\delr,E_j) - R(\delr,Y_v,\delr,\nabla_{Y_u}E_j).
\end{split}
\end{equation}
Define $\mathbf{k}^j$ as
\begin{equation*}\begin{split}
\mathbf{k}^j &= R^0(\delr,E_j,Y_v,SY_u) - R^0(\delr,Y_v,SY_u,E_j)\\
& \quad -R^0(\delr,\nabla_{Y_v}Y_u,\delr,E_j) - R^0(\delr,Y_v,\delr,\nabla_{Y_u}E_j),
\end{split}
\end{equation*}
so that equation \eqref{eq:delrdelrfj} becomes
\begin{equation}\label{eq:RHS}\begin{split}
\delr\delr f^j - \mathbf{k}^j &= (\Ndr R) (\delr,E_j,Y_v,Y_u) - (\nabla_{Y_v} R) (\delr,Y_u,\delr,E_j)\\
& \quad + (R-R^0)(\delr,E_j,Y_v,SY_u) - (R-R^0)(\delr,Y_v,SY_u,E_j)\\
& \quad -(R-R^0)(\delr,\nabla_{Y_v}Y_u,\delr,E_j)\\&\quad  - (R-R^0)(\delr,Y_v,\delr,\nabla_{Y_u}E_j).
\end{split}
\end{equation}
From the computations of Lemma \ref{appendix:B2} in the Appendix, it turns out that
\begin{equation*}
\begin{split}
\mathbf{k}^j &= \frac{1}{4} g(SY_u,\Jdelr)g(Y_v,\JE_j) -\frac{1}{4} g(SY_u,\JE_j)g(Y_v,\Jdelr)\\
& \quad -\frac{1}{4}g(SY_u,\Jdelr)g(Y_v,\JE_j) - \frac{1}{2}g(SY_u,\JE_j)g(Y_v,\Jdelr)		\\
&\quad +\frac{1}{4}g(\nabla_{Y_v}Y_u,E_j) \\
&\quad +\frac{1}{4}g(Y_v,\nabla_{Y_u}E_j) + \frac{3}{4}g(SY_u,\JE_j)g(Y_v,\Jdelr)\\
&=\frac{1}{4}g(\nabla_{Y_v}Y_u,E_j) + \frac{1}{4}g(Y_v,\nabla_{Y_u}E_j)\\
&= \frac{1}{4}f^j.
\end{split}
\end{equation*}
Thus, $f^j$ is a solution to the second order linear ODE
\begin{equation*}
\delr\delr f^j - \frac{1}{4}f^j = h^j,
\end{equation*}
with $h^j$ being equal to the right-hand side of \eqref{eq:RHS}.
From classical second order linear ODE considerations, $f^j$ is given by
\begin{equation*}\label{eq:f^j} \begin{split}
f^j(r,p) &= e^{\frac{r}{2}}\left(\frac{1}{2}f^j(0,p) + \delr f^j(0,p) + \int_0^r e^{-\frac{s}{2}}h^j(s,p)\dx s\right) \\
&\quad + e^{-\frac{r}{2}}\left( \frac{1}{2}f^j(0,p) - \delr f^j(0,p) - \int_0^r e^{\frac{s}{2}}h^j(s,p)\dx s\right).
\end{split}
\end{equation*}
The following upper bound is straightforward
\begin{equation*}\begin{split}
|h^j| &\leqslant 2(\|R-R^0\|_g\|S\|_g + \|\nabla R\|_g)\|Y_u\|_g\|Y_v\|_g + \|R-R^0\|_g\|\nabla_{Y_v}Y_u\|_g \\
&\quad + \|R-R^0\|_g\|Y_v\|_g\|\nabla_{Y_u}E_j\|_g.
\end{split}
\end{equation*}
Lemma \ref{lem:bound_nablaY_uX} yields the existence of $c>0$ such that $\|\nabla_{Y_u}E_j\|_g\leqslant c\|u\|_ge^r$.
It now follows from the \ALCH and \ALS conditions together with Lemmas \ref{lem:important_estimates} and \ref{prop:bound_nablaY_vY_u} that there exists a constant $C>0$ such that
\begin{equation*}
|h^j| \leqslant C(\|u\|_g\|v\|_g + \|(\nabla_{Y_v}Y_u)(p)\|_g)e^{(2-\min\{a,b\})r},
\end{equation*}
and therefore, we have the following upper bounds
\begin{equation*}
\begin{split}
|e^{-\frac{s}{2}}h^j(s,p)| &\leqslant C(\|u\|_g\|v\|_g + \|(\nabla_{Y_v}Y_u)(p)\|_g)e^{(\frac{3}{2}-\min\{a,b\})r}, \\
|e^{\frac{s}{2}}h^j(s,p)| &\leqslant C(\|u\|_g\|v\|_g + \|(\nabla_{Y_v}Y_u)(p)\|_g)e^{(\frac{5}{2}-\min\{a,b\})r}.
\end{split}
\end{equation*}
Let $\alpha^j(p) = \frac{1}{2}f^j(0,p) + \delr f^j(0,p) + \int_0^{+\infty} e^{-\frac{s}{2}}h(s,p) \dx s $, which is continuous by the dominated convergence Theorem.
The result follows from a strictly similar study than that of the proof of Proposition \ref{prop:convergence_to_alpha}.
\end{proof}

We are now able to prove our third main result, which we restate for the reader's convenience.

\begin{thmx}\label{thm:gamma_H_C1}
Let $(M,g,J)$ be an \ALCH and \ALS manifold of order $a$ and $b$, with $\min\{a,b\}>\frac{3}{2}$, with an essential subset $K$ such that the sectional curvature of $\bar{M\setminus K}$ is negative.
Then the canonical Carnot-Carathéodory metric $\gamma_H$ has $\mathcal{C}^1$ regularity.
\end{thmx}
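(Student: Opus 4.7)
The strategy mimics the proof of Theorem \ref{thm:C1}. Since $\gamma_H$ locally reads $\gamma_H = \sum_{j=1}^{2n} \eta^j \otimes \eta^j$ (from the proof of Theorem \ref{thm:continuous}), establishing $\mathcal{C}^1$ regularity of $\gamma_H$ on $\dK$ reduces to showing that on every cylinder $E(\R_+\times U)$, each local $1$-form $\eta^j$ is of class $\mathcal{C}^1$ on $U$. The plan is to obtain this via $\mathcal{C}^1$-convergence of the smooth family $(\eta^j_r)_{r\geqslant 0}$ to $\eta^j$ on $U$, in direct analogy with the treatment of $\eta_r \to \eta$ carried out for Theorem \ref{thm:C1}.

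The first step is to derive the analogue of Lemma \ref{lem:L_uetav} for $\eta^j_r = e^{-r/2}E(r,\cdot)^*(g(\cdot,E_j)|_{\delr^{\perp}})$. A direct computation, using $[\delr,Y_u]=0$, the orthogonality $Y_u \perp \delr$ (so that $Y_u \cdot r = 0$), and the torsion-free property of $\nabla$, yields
\begin{equation*}
(\mathcal{L}_u \eta^j_r)(v) = e^{-\frac{r}{2}}\bigl(g(\nabla_{Y_v}Y_u,E_j) + g(Y_v,\nabla_{Y_u}E_j)\bigr) = e^{-\frac{r}{2}} f^j(r,\cdot),
\end{equation*}
where $f^j$ is precisely the function introduced in Proposition \ref{prop:convergence_to_alphaj}. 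The second step is to invoke that Proposition: there exist continuous functions $\alpha^j$ on $U$ such that $f^j(r,p) - e^{r/2}\alpha^j(p)$ is controlled by $(\|u\|_g\|v\|_g + \|\nabla_{Y_v}Y_u\|_g)\, e^{(2-\min\{a,b\})r}$ in the range $3/2 < \min\{a,b\} < 5/2$ (with sharper decay in the two remaining cases). Multiplying by the renormalizing factor $e^{-r/2}$ gives
\begin{equation*}
\bigl|(\mathcal{L}_u \eta^j_r)(v) - \alpha^j(p)\bigr| \leqslant C(\|u\|_g\|v\|_g + \|\nabla_{Y_v}Y_u\|_g)\, e^{(\frac{3}{2}-\min\{a,b\})r},
\end{equation*}
with the analogous logarithmic or $e^{-r}$ corrections in the boundary cases, which decays exactly when $\min\{a,b\} > 3/2$. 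Working in coordinates $(x^1,\ldots,x^{2n+1})$ on $U$ as in the proof of Theorem \ref{thm:C1}, and using the uniform bounds on $\|\partial_i\|_g$, $\|\partial_j\|_g$ and $\|\nabla_{\partial_j}\partial_i\|_g$ over relatively compact coordinate patches, one obtains locally uniform convergence of the partial derivatives $\partial_i (\eta^j_r)_k$ to continuous functions on $U$. This proves that each $\eta^j$ is of class $\mathcal{C}^1$ on $U$, and a covering argument then gives that $\gamma_H$ is of class $\mathcal{C}^1$ on $\dK$.

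The central obstacle, and the reason why the threshold rises from $\min\{a,b\} > 1$ (used in Theorem \ref{thm:C1}) to $\min\{a,b\} > 3/2$ here, is the weaker renormalization: $\eta^j_r$ carries the prefactor $e^{-r/2}$ instead of the $e^{-r}$ of $\eta_r$. Correspondingly, the second-order ODE satisfied by $f^j$ is $\delr\delr f^j - \tfrac{1}{4} f^j = h^j$ rather than $\delr\delr f - f = h$, so its dominant homogeneous mode grows only at rate $e^{r/2}$, and the matching between this growth rate and the $e^{-r/2}$ renormalization is precisely what costs half a unit of decay in the curvature hypothesis. Beyond this exponent bookkeeping, no new geometric input is needed: the estimates on $\|\nabla_{Y_v}Y_u\|_g$ from Proposition \ref{prop:bound_nablaY_vY_u}, on $\|\nabla_{Y_u}E_j\|_g$ from Lemma \ref{lem:bound_nablaY_uX}, and on normal Jacobi fields from Lemma \ref{lem:important_estimates} already suffice to close the argument.
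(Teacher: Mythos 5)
Your proposal is correct and follows essentially the same route as the paper: the paper likewise reduces to the $\mathcal{C}^1$ regularity of the local $1$-forms $\eta^k$, computes $\partial_i(\eta^k_r)(\partial_j) = e^{-\frac{r}{2}}\bigl(g(\nabla_{Y_j}Y_i,E_k)+g(\nabla_{Y_i}E_k,Y_j)\bigr)$ exactly as you do, and invokes Proposition \ref{prop:convergence_to_alphaj} to get locally uniform convergence of these partial derivatives. Your exponent bookkeeping (the loss of half a unit of decay coming from the $e^{-r/2}$ renormalization and the $\delr\delr f^j - \frac{1}{4}f^j = h^j$ equation) matches the paper's.
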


\begin{proof}
Let $\{\eta,\eta^1,\ldots,\eta^{2n}\}$ be the local coframe associated to a radially parallel orthonormal frame $\{\delr,\Jdelr,E_1,\ldots,E_{2n}\}$ on a cylinder $E(\R_+\times U)$.
Let $k\in \{1,\ldots,2n\}$ be fixed and $(\eta^k_r)_{r\geqslant 0}$ be the local family of $1$-forms that locally uniformly converges to $\eta^k$ in $\mathcal{C}^0$ topology.
Let $\{x^1,\ldots,x^{2n+1}\}$ be local coordinates on an open subset of $U$, and write $\eta_r^k = \sum_{j=1}^{2n+1} (\eta_r^k)_j \dx x^j$.
The partial derivatives of $\eta_r^k$ in these coordinates are given by
\begin{equation*}
\partial_i (\eta^k_r)(\partial_j) = e^{-\frac{r}{2}}\big( g(\nabla_{Y_j}Y_i,E_k) + g(\nabla_{Y_i}E_k,Y_j)\big),
\end{equation*}
and it follows from Proposition \ref{prop:convergence_to_alphaj} that they locally uniformly converges.
Hence, $\eta^k$ is of class $\mathcal{C}^1$.

The local coframe $\{\eta,\eta^1,\ldots,\eta^{2n}\}$ has then been shown to be of class $\mathcal{C}^1$.
Since the Carnot-Carathéodory metric is locally given by
$
\gamma_H = \sum_{i=1}^{2n}\eta^i\otimes \eta^i
$
it follows that it has $\mathcal{C}^1$ regularity.
This concludes the proof.
\end{proof}
\begin{remark}
It is worth noting that in that case, although $\eta$ is not of class $\mathcal{C}^2$, its exterior differential $\dx \eta$ is of class $\mathcal{C}^1$, since it is locally expressed as a combination of $\{\eta^i\wedge \eta^j\}_{i,j\in \{1,\ldots,2n\}}$.
\end{remark}


\section{The almost complex structure}
\label{section:almost_complex_structure}

\subsection{Notations}

Throughout this section, the K\"ahler manifold $(M,g,J)$ is assumed to satisfy the \ALCH and \ALS conditions of orders $a,b>\frac{3}{2}$, with an essential subset $K$ such that the sectional curvature of $\bar{M\setminus K}$ is negative.
From Theorem \ref{thm:gamma_H_C1}, $\dK$ is endowed with a contact form $\eta$ of class $\mathcal{C}^1$ with contact structure $H = \ker \eta$, and with a $\mathcal{C}^1$ Carnot-Carathéodory metric $\gamma_H$ which is positive definite on $H$.
Let $g_0 = g|_{\dK}$ be the induced metric on $\dK$, and recall that if $g_r = E(r,\cdot)^*g|_{\{\delr\}^{\perp}}$, Corollary \ref{cor:norm_Yu_greater} yields the existence of $c_0>0$ such that $g_0 \leqslant  c_0e^{-r}g_r$.

For $r \geqslant 0$, we set $S_r = E(r,\cdot)^* S$ the pull-back of the shape operator thought as a field of endomorphisms of $\{\delr\}^{\perp}$.
To any radially parallel orthonormal frame $\{\delr,\Jdelr,E_1,\ldots,E_{2n}\}$ is associated a local coframe $\{\eta_r,\eta^1_r,\ldots,\eta^{2n}_r\}$ which locally converges in $\mathcal{C}^1$ topology to the local coframe $\{\eta,\eta^1,\ldots,\eta^{2n}\}$.
Moreover, there exists a constant $c>0$ independent of the radially parallel orthonormal frame and from $r$, such that the differential forms $\{\eta_r,\eta^1_r,\ldots,\eta_r^{2n}\}$ and $\{\eta,\eta^1,\ldots,\eta^{2n}\}$ all have $g_0$-norm less than $c$.
By Proposition \ref{prop:convergence_etar}, it holds that
\begin{equation}\label{eq:eta-eta_r}
\max\{ e^r\|\eta_r-\eta\|_{g_0}, e^{\frac{r}{2}}\|\eta^1_r-\eta^1\|_{g_0},\ldots,e^{\frac{r}{2}}\|\eta^{2n}_r-\eta^{2n}\|_{g_0}\} \leqslant c e^{-\frac{r}{2}}.
\end{equation}
If $v$ is a vector field tangent to $\dK$ and if $Z_v$ and $Z'_v$ are the vector fields asymptotic to $Y_v$ and $SY_v$, then Proposition \ref{prop:Y_v-Z_v} states that there exists $C>0$ such that
\begin{equation}\label{eq:z_v-y_v_normalized}
\max\{\|Y_v-Z_v\|_g,\|SY_v-Z'_v\|_g\} \leqslant C \|v\|_g e^{-\frac{r}{2}}.
\end{equation}
The dual frames of the local coframes $\{\eta_r,\eta^1_r,\ldots,\eta_r^{2n}\}$ and $\{\eta,\eta^1,\ldots,\eta^{2n}\}$ are denoted by $\{\xi^r,\xi^r,\ldots,\xi^r_{2n}\}$ and $\{\xi,\xi_1,\ldots,\xi_{2n}\}$.
Remark that $\xi^r = E(r,\cdot)^* \left(e^r\Jdelr\right)$ and $\xi^r_j = E(r,\cdot)^* \left(e^{\frac{r}{2}}E_j\right)$.
Since $\{\eta_r,\eta^1_r,\ldots,\eta_r^{2n}\}$ locally converges in $\mathcal{C}^1$ topology, so does $\{\xi^r,\xi^r,\ldots,\xi^r_{2n}\}$.
It follows that $\{\xi,\xi_1,\ldots,\xi_{2n}\}$ is of class $\mathcal{C}^1$.

Let us define a field of endomorphisms on $\dK$ related to the ambient almost complex structure $J$.
Since $J$ does not preserve $\{\delr\}^{\perp}$, it is not possible to pull it back on $\dK$ by $E(r,\cdot)$.
However, the tensor
\begin{equation*}
\Phi = J - g(\cdot,\delr)\otimes \Jdelr + g(\cdot,\Jdelr) \otimes \delr,
\end{equation*}
stabilizes the distribution $\{\delr\}^{\perp}$.

\begin{definition}
For $r\geqslant 0$, let $\phi_r$ be defined by $\phi_r = E(r,\cdot)^* \Phi$.
\end{definition}
Since $\Phi$ satisfies the two equalities $\Phi^2 = -\Id + g(\cdot,\delr)\otimes \delr + g(\cdot, \Jdelr)\otimes \Jdelr$ and $\Phi^3 = - \Phi$, we have the immediate Lemma.

\begin{lemma}\label{lem:almost_constact}
For all $r \geqslant 0$, $\phi_r^2 = -\Id + \eta_r\otimes \xi^r$, and $\phi_r^3 = -\phi_r$.
\end{lemma}

\begin{remark}
Lemma \ref{lem:almost_constact} states that $(\dK,\phi_r,\eta_r,\xi^r)$ is an \textit{almost contact} manifold in the sense of \cite{sasaki_differentiable_1960,sasaki_differentiable_1961,blair_riemannian_2010}.
The following estimates are worth noting.
\end{remark}

\begin{lemma}\label{lem:bahaviour_phi_r}
There exists $c_1,c_2>0$ such that for all $r \geqslant 0$, it holds
\begin{enumerate}
\item $\|\phi_r\|_{g_0} \leqslant 1$,
\item $\|\phi_r \xi\|_{g_0} \leqslant c_1 e^{-r}$,
\item $\|\eta\circ \phi_r\|_{g_0} \leqslant c_2e^{-r}$.
\end{enumerate}
\end{lemma}

\begin{proof}
\begin{enumerate}
\item Since $\Phi = \pi^{\perp}\circ J \circ \pi^{\perp}$, where $\pi^{\perp}$ is the orthogonal projection on ${\delr}^{\perp}$ and $J$ is an isometry, $\Phi$ has an operator norm less than or equal to $1$.
The result follows.
\item Since $\phi_r \xi^r =0$, it holds that
\begin{equation*}\begin{split}
\|\phi_r\xi\|_{g_0} &= \|\phi_r(\xi - \xi^r)\|_{g_0} \\
&\leqslant \|\phi_r\|_g\|\xi-\xi^r\|_{g_0} \\
&\leqslant c_0e^{-\frac{r}{2}}\|Y_{\xi} - e^r\Jdelr\|_{g_0} \\
&\leqslant c_0C\|\xi\|_{g_0} e^{-r},
\end{split}
\end{equation*}
the inequalities being derived from the first point and from equation \eqref{eq:z_v-y_v_normalized}.
To conclude, define $c_1 = c_0C \sup_{\dK}\|\xi\|_g <+\infty$.
\item Since $\eta_r \circ \phi_r = 0$, it holds that
\begin{equation*}
\|\eta\circ \phi_r\|_{g_0} = \|(\eta-\eta_r)\circ \phi_r\|_{g_0} \leqslant \|\eta-\eta_r\|_{g_0} \|\phi_r\|_{g_0} \leqslant ce^{-\frac{3}{2}r},
\end{equation*}
the latter inequality being derived from equation \eqref{eq:eta-eta_r} and from the first point.
The result is then true with $c_2=c$.
\end{enumerate}
\end{proof}

\subsection{Shape operator estimates}

We shall now give estimates on the asymptotic behaviour of $(S_r)_{r\geqslant 0}$.
We first prove that $S_r$ is asymptotic to the tensor $\frac{1}{2}(\Id+\eta \otimes \xi)$.

\begin{lemma}\label{lem:s_r_asymptotic}
There exists a constant $c'>0$ such that
\begin{equation*}
\forall r \geqslant 0,\quad  \|S_r - \frac{1}{2}(\Id + \eta\otimes \xi) \|_{g_0} \leqslant c' e^{-r}.
\end{equation*}
\end{lemma}

\begin{proof}
Fix $v \in T\dK$.
From Corollary \ref{cor:norm_Yu_greater}, there exists $c_0>0$ such that
\begin{equation}\label{eq:norm_S_r}
\|S_rv - \frac{1}{2}(v + \eta(v)\xi)\|_{g_0}\leqslant c_0 e^{-\frac{r}{2}}\|SY_v - \frac{1}{2}(Y_v + \eta(v) Y_{\xi})\|_{g_0}.
\end{equation}
The local expressions of $Z_v$ and $Z_v'$ in a radially parallel orthonormal frame show that $\frac{1}{2}\eta(v)e^r\Jdelr = Z_v' - \frac{1}{2}Z_v$.
Therefore, it holds that
\begin{equation*}
SY_v - \frac{1}{2}(Y_v+ \eta(v)e^r\Jdelr) = SY_v - Z_v'  - \frac{1}{2}(Y_v - Z_v) + \frac{1}{2}\eta(v)(e^r\Jdelr - Y_{\xi}).
\end{equation*}
By definition, $\eta(\xi) =1$, and thus $e^r\Jdelr = Z_{\xi}$.
It follows from the triangle inequality that
\begin{equation*}\begin{split}
\|SY_v - \frac{1}{2}(Y_v+ \eta(v)e^r\Jdelr)\|_g & \leqslant \|SY_v-Z_v'\|_g + \frac{1}{2}\|Y_v-Z_v\|_g \\&\quad + \frac{1}{2} |\eta(v)| \|Y_{\xi}-Z_{\xi}\|_g.
\end{split}
\end{equation*}
It now follows from the uniform bound on $\|\eta\|_{g_0}$ and from equation \eqref{eq:z_v-y_v_normalized} that there exists $C>0$ such that
\begin{equation*}
\|SY_v - \frac{1}{2}(Y_v+ \eta(v)e^r\Jdelr)\|_g \leqslant C\|v\|_g(1+\|\xi\|_g)e^{-\frac{r}{2}}.
\end{equation*}
Finally, equation \eqref{eq:norm_S_r} yields
\begin{equation*}
\|S_rv - \frac{1}{2}(v + \eta(v)\xi)\|_{g_0}\leqslant c_0C(1+\|\xi\|_{g_0}) \|v\|_{g} e^{-r}.
\end{equation*}
The result follows by setting $c' = c_0C(1+\sup_{\dK}\|\xi\|_{g_0})$, which is finite since $\xi$ is a global vector field and $\dK$ is compact.
\end{proof}

\begin{remark}
In the model setting, the shape operator of concentric spheres is of the form $S = \coth r \Id_{\R \Jdelr} + \frac{1}{2}\coth(\frac{r}{2})\Id_{\{\delr,\Jdelr\}^{\perp}}$.
\end{remark}
We shall now show that $S_r$ and $\phi_r$ asymptotically commute.

\begin{lemma}\label{lem:s_r-phi_r_commute}
There exists $c''>0$ such that
\begin{equation*}
\forall r \geqslant 0,\quad \|S_r\phi_r - \phi_r S_r \|_{g_0} \leqslant c'' e^{-r}.
\end{equation*}
\end{lemma}

\begin{proof}
First, write
\begin{equation*}
\begin{split}
S_r\phi_r &= (S_r - \frac{1}{2}(\Id + \eta\otimes \xi)) \phi_r + \frac{1}{2}\phi_r + \frac{1}{2} (\eta\circ \phi_r) \otimes \xi,\\
\phi_r S_r&= \phi_r(S_r - \frac{1}{2}(\Id + \eta\otimes \xi)) + \frac{1}{2}\phi_r + \frac{1}{2} \eta \otimes (\phi_r \xi).
\end{split}
\end{equation*}
It now follows from the triangle inequality that
\begin{equation*}
\|S_r\phi_r - S_r\phi_r \|_{g_0} \leqslant 2\|\phi_r\|_{g_0} \|S_r - \frac{1}{2}(\Id + \eta\otimes \xi)\|_{g_0}
+ \|\eta\circ \phi_r\|_{g_0} \|\xi\|_{g_0} + \|\eta\|_{g_0} \|\phi_r \xi\|_{g_0}.
\end{equation*}
The result follows from Lemmas \ref{lem:bahaviour_phi_r} and \ref{lem:s_r_asymptotic}.
\end{proof}

\subsection{Convergence}

We now show that $(\phi_r)_{r\geqslant 0}$ converges in $\mathcal{C}^1$ topology to some tensor $\phi$, and that the restriction of this tensor to $H$ is an almost complex structure.

\begin{proposition}\label{prop:almost_complex}
The family $(\phi_r)_{r\geqslant 0}$ converges in $\mathcal{C}^1$ topology to a $\mathcal{C}^1$ tensor $\phi$ satisfying $\phi^2 = -\Id + \eta\otimes \xi$, $\phi^3 = -\phi$, $\eta\circ\phi  = 0$ and $\eta\xi = 0$.
In particular, $\phi$ preserves $H = \ker \eta$ and $({\phi|_H})^2 = - \Id_H$.
\end{proposition}

\begin{proof}
Let $\{\delr,\Jdelr,E_1,\ldots,E_{2n}\}$ be a radially parallel orthonormal frame on a cylinder $E(\R_+\times U)$.
By the very definition of $\Phi$, it holds that
\begin{equation*}
\Phi = \sum_{j=1}^{2n} g(\cdot,E_j)\otimes \JE_j.
\end{equation*}
Assume that the radially parallel orthonormal frame is a $J$-frame, that is for any $k\in \{1,\ldots,n\}$, it holds that $\JE_{2k-1} = \JE_{2k}$.
Then $\Phi$ has the nice expression
\begin{equation*}
\Phi = \sum_{k=1}^n g(\cdot,E_{2k-1})\otimes E_{2k} - g(\cdot,E_{2k})\otimes E_{2k-1},
\end{equation*}
from which we deduce the following expression for $\phi_r$
\begin{equation*}
\forall r \geqslant 0,\quad \phi_r = \sum_{k=1}^n \eta^{2k-1}_r \otimes \xi^r_{2k} - \eta^{2k}_r \otimes \xi^r_{2k-1}.
\end{equation*}
From the convergence of the local coframe $\{\eta_r,\eta^1_r,\ldots,\eta_r^{2n}\}$ to $\{\eta,\eta^1,\ldots,\eta^{2n}\}$ and of the local frame $\{\xi^r,\xi_1^r,\ldots,\xi_{2n}^r\}$ to $\{\xi,\xi_1,\ldots,\xi_{2n}\}$ in $\mathcal{C}^1$ topology, it follows that on $U$, $(\phi_r)_{r\geqslant 0}$ converges in $\mathcal{C}^1$ topology to
\begin{equation*}
\phi = \sum_{k=1}^n \eta^{2k-1}\otimes \xi_{2k} - \eta^{2k}\otimes \xi_{2k-1}.
\end{equation*}
Notice that this does not depend on the chosen radially parallel $J$-orthonormal frame, and $\phi_r\to \phi$ in $\mathcal{C}^1$ topology on $\dK$.
Taking the limit as $r\to +\infty$ in Lemmas \ref{lem:almost_constact} and \ref{lem:bahaviour_phi_r} concludes the proof.
\end{proof}
\begin{definition}\label{dfn:phi}
The almost complex structure $J_H$ on $H$ is defined as the restriction of $\phi$ to $H$.
\end{definition}

\subsection{Integrability}

Since $J_H$ is an almost complex structure, the complexified bundle $H\otimes \C$ splits as $H\otimes \C = H^{1,0} \oplus H^{0,1}$, where $H^{1,0} = \ker \{J_H - i \Id \}= \{X+iJ_HX \mid X\in H\}$ and $H^{0,1} = \ker \{ J_H + i \Id\}$.
Since $H$ and $J_H$ are both of class $\mathcal{C}^1$, the Lie bracket of sections of $H^{1,0}$ makes sense.
It is then possible to ask whether $J_H$ is integrable, that is,  if $H^{1,0}$ is stable under the Lie bracket.
However, $J_H$ is defined as the restriction of the limit of a family of tensors $(\phi_r)$, which does not preserve $H$, and  it is not clear what condition on $\phi_r|_{\ker \eta_r}$ would ensure that $J_H$ is integrable.
Considering the whole tensor $\phi$ is thus more convenient, as the following study shows.

Recall that	$\phi$ satisfies $\phi^3 = -\phi$.
The complexified tangent bundle $T\dK\otimes \C$ then splits into the direct sum of the eigenspaces of $\phi$ as
\begin{equation*}
T\dK\otimes \C = \ker \phi \oplus \ker \{\phi - i\Id \} \oplus \ker \{\phi + i \Id\} = \C \xi \oplus H^{1,0} \oplus H^{0,1}.
\end{equation*}
We still denote by $\phi$ and $\eta$ the complex-linear extensions of $\phi$ and $\eta$.
Any complex vector field $V \in \Gamma(T\dK)\otimes \C$ reads
$V = \eta(V)\xi + V^{1,0} + V^{0,1}$,
with $\phi V^{1,0}=iV^{1,0}$ and $\phi V^{0,1}=-iV^{0,1}$.
Since $\phi \xi = 0$, it follows that $V+i\phi V = \eta(V)\xi + 2 V^{0,1}$.
Hence, a complex vector field $V$ on $\dK$ is a section of $H^{1,0}$ if and only if $V+i\phi V = 0$.
Finally, it follows that $H^{1,0}$ is integrable if and only if
\begin{equation*}
\forall u,v \in \Gamma(H), \quad [u-i\phi u,v-i\phi v] + i \phi [u-i\phi u, v-i\phi v] = 0.
\end{equation*}

The Nijenhuis tensor $N_A$ of a field of endomorphisms $A$ is defined by
\begin{equation*}
\forall X,Y,\quad N_A(X,Y) = -A^2[X,Y] - [AX,AY] + A[AX,Y] + A[X,AY] .
\end{equation*}
The integrability of $J_H$ is related to the Nijenhuis tensor of $\phi$ by the following Lemma.
\begin{lemma}\label{lem:nijenhuis}
For any vector fields $u$ and $v$ on $\dK$, setting $V = [u-i\phi u, v-i\phi v]$,  it holds that
\begin{equation*}
\begin{split}
V+i\phi V &= N_{\phi}(u,v) + \eta([u,v])\xi + i\phi N_{\phi}(u,v)- i (\eta([\phi u,v]) + \eta([u,\phi v])) \xi.
\end{split}
\end{equation*}
\end{lemma}

\begin{proof}
Extending the Lie bracket $\C$-linearly, it holds that
\begin{equation*}
\begin{split}
V +i\phi V &= [u,v] - [\phi u, \phi v] -i [\phi u,v] - i [u,\phi v]\\
&\quad + i \phi \big( [u,v] - [\phi u, \phi v] -i [\phi u,v] - i [u,\phi v]\big)\\
&= [u,v] - [\phi u, \phi v] + \phi[\phi u, v] + \phi[ u,\phi v]\\
&\quad + i \big( \phi [u,v] - \phi[\phi u, \phi v] -[\phi u,v] - [u, \phi v] \big).
\end{split}
\end{equation*}
The equalities $\phi^2 = -\Id + \eta\otimes \xi$ and $\phi^3 = -\phi$ now yield
\begin{equation*}\begin{split}
V+i\phi V &= (-\phi^2 + \eta\otimes \xi)[u,v]  - [\phi u, \phi v] + \phi[\phi u, v] + \phi[ u,\phi v]\\
&\quad + i \big( -\phi^3[u,v] - \phi[\phi u, \phi  v] + (\phi^2 - \eta\otimes \xi)[\phi u, v] \\
&\quad + (\phi^2 - \eta\otimes \xi)[u,\phi v] \big) \\
&= -\phi^2[u,v] -[\phi u, \phi v] + \phi[\phi u, v] + \phi [u,\phi v] + \eta([u,v])\xi\\
& \quad + i \phi\big( -\phi^2[u,v] -[\phi u, \phi v] + \phi[\phi u, v] + \phi [u,\phi v]\big)\\
& \quad - i\big( \eta([\phi u,v]) + \eta([u,\phi v]) \big) \xi.
\end{split}
\end{equation*}
The result follows from the definition of the Nijenhuis tensor $N_{\phi}$.
\end{proof}

\begin{proposition}\label{prop:integrability_conditions}
The almost complex structure $J_H$ is integrable if and only if the two following conditions are satisfied:
\begin{enumerate}
\item $N_{\phi}|_{H\times H} = \dx \eta|_{H\times H} \otimes \xi$,
\item $\dx\eta|_{H\times H}(J_H\cdot,\cdot) = -\dx\eta|_{H\times H}(\cdot,J_H \cdot)$.
\end{enumerate}
\end{proposition}

\begin{proof}
Let $u$ and $v$ be tangent to the distribution $H$ and $V=[u-i\phi u, v-i \phi v]$.
Then, $V$ is tangent to $H^{1,0}$ if and only if  $V+ i \phi V = 0$.
Identifying the real and imaginary parts, Lemma \ref{lem:nijenhuis} then states that $V$ is tangent to $H^{1,0}$ if and only if
\begin{equation*}
\begin{cases}
0&=N_{\phi}(u,v) + \eta([u,v])\xi, \\
0&=\phi N_{\phi}(u,v) - \big(\eta([\phi u, v]) + \eta([u,\phi v])\big) \xi.
\end{cases}
\end{equation*}
Since $\phi \xi = 0$, this latter system is equivalent to
\begin{equation*}\begin{cases}
N_{\phi}(u,v) &=- \eta([u,v])\xi, \\
\eta([\phi u, v]) &= - \eta([u,\phi v]).
\end{cases}
\end{equation*}
By definition of $u$ and $v$, $\eta(u)=\eta(v) = 0$.
Moreover, since $\eta\circ \phi = 0$, it follows that $\eta(\phi u) = \eta(\phi v) = 0$.
Therefore, it holds that
\begin{equation*}
\begin{cases}
\dx \eta([u,v]) = u \cdot \eta(v) - v\cdot \eta(u) - \eta([u,v] = -\eta([u,v])),\\
\dx\eta ([\phi u, v]) = (\phi u) \cdot \eta(v) - v \cdot \eta(\phi u) - \eta([\phi u,v]) = -\eta([\phi u, v]),\\
\dx\eta ([ u,\phi v]) =  u \cdot \eta(\phi v) - (\phi v) \cdot \eta( u) - \eta([u,\phi v]) = -\eta([ u,\phi v]).
\end{cases}
\end{equation*}
Finally, $V$ is tangent to $H^{1,0}$ if and only if
$
N_{\phi}(u,v) = \dx \eta(u,v) \xi$ and $\dx \eta (\phi u, v) = - \dx \eta (u, \phi v)$.
\end{proof}

\begin{lemma}\label{lem:almost_type_(1,1)}
There exists $\tilde{c}>0$ such that for all $r\geqslant0$, $p\in \dK$ and $u,v\in T_p\dK$, we have
\begin{equation*}
|\dx \eta_r(\phi_r u,v) + \dx \eta_r(u,\phi_r v)| \leqslant \tilde{c}\|u\|_g\|v\|_g e^{-\frac{r}{2}}.
\end{equation*}
\end{lemma}

\begin{proof}
Recall that equation \eqref{eq:deta_r} gives, for all $r \geqslant 0$ and $u,v$ tangent to $\dK$
\begin{equation*}
\dx \eta_r(u,v) = e^{-r} \big( g(Y_v,JSY_u) - g(Y_u,JSY_v)\big).
\end{equation*}
Since $J$ is skew-symmetric, it holds that
\begin{equation*}
\dx\eta_r(u,v) = e^{-r}\big( g(SY_u,JY_v) - g(SY_v,JY_u)\big).
\end{equation*}
Notice that since $Y_u$ and $Y_v$ are normal to $\delr$, this latter expression reads
\begin{equation*}
\dx\eta_r(u,v) = e^{-r}\big( g(SY_u,\Phi Y_v) - g(SY_v,\Phi Y_u)\big).
\end{equation*}
It thus holds that
\begin{equation*}\begin{split}
\dx \eta_r(\phi_ru,v) &= e^{-r}\big( g(S\Phi Y_u,\Phi Y_v) - g(SY_v,\Phi^2 Y_u)\big),\\
\dx\eta_r(u,\phi_r v) &= e^{-r}\big( g(SY_u,\Phi^2 Y_v) - g(S\Phi Y_v,\Phi Y_u)\big).
\end{split}
\end{equation*}
The symmetry of $S$ now yields
\begin{equation*}
\dx\eta_r(\phi_ru,v) + \dx\eta_r(u,\phi_r v) = e^{-r} \big(g(SY_u,\Phi^2Y_v) - g(SY_v,\Phi^2 Y_u)  \big).
\end{equation*}
Since $\Phi^2Y_u = -Y_u + g(Y_u,\Jdelr)\otimes \Jdelr$, and similarly for $Y_v$, it follows that
\begin{equation*}\begin{split}
\dx\eta_r(\phi_ru,v) + \dx\eta_r(u,\phi_r v) &= e^{-r} \big(g(SY_v,\Jdelr)g(Y_u,\Jdelr)
\\& \quad - g(SY_u,\Jdelr)g(Y_v,\Jdelr) \big).
\end{split}
\end{equation*}
Using the symmetry of $S$ and writing $S\Jdelr = (S\Jdelr - \Jdelr) + \Jdelr$ now shows that
\begin{equation*}
\dx\eta_r(\phi_ru,v) + \dx\eta_r(u,\phi_r v) = \eta_r(u)g(Y_v,S\Jdelr-\Jdelr) - \eta_r(v)g(Y_u,S\Jdelr - \Jdelr).
\end{equation*}

Recall that there exist constants $c_1,c_2>0$ such that $\|\eta_r\|_g \leqslant c_1$ and $\|Y_w\|_g \leqslant c_2\|w\|_ge^r$ for any $w$.
Thus
\begin{equation*}
|\dx\eta_r(\phi_ru,v) + \dx\eta_r(u,\phi_r v)| \leqslant c\|u\|_g\|v\|_g \|Se^r\Jdelr - e^r\Jdelr\|_g.
\end{equation*}
To conclude, it suffices to control $\|Se^r\Jdelr - e^r\Jdelr\|_g$.
Recall that $e^r \Jdelr = Z_{\xi} = Z'_{\xi}$.
It now follows from the triangle inequality, the fact that $S$ is uniformly bounded, and equation \eqref{eq:z_v-y_v_normalized} that there exists $c_3 >0$ such that
\begin{equation*}\begin{split}
\|Se^r\Jdelr - e^r\Jdelr\|_g &= \|S(Z_{\xi} - Y_{\xi}) + SY_{\xi} - Z_{\xi}'\|_g\\
& \leqslant \|S\|_g\|Y_{\xi}-Z_{\xi}\|_g + \|SY_{\xi} - Z'_{\xi}\|_g\\
& \leqslant c_3 \|\xi\|_ge^{-\frac{r}{2}}.
\end{split}
\end{equation*}
The proof follows by setting $\tilde{c}=c_1c_2c_3 \sup_{\dK}\|\xi\|_g < + \infty$ since $\xi$ is a continuous vector field on $\dK$ compact.
\end{proof}

\begin{proposition}\label{prop:J_H_is_(1,1)}
Let $u$ and $v$ be vector fields on $\dK$.
Then it holds that $\dx \eta(\phi u,v) = -\dx \eta(u,\phi v)$.
In particular, $\dx \eta|_{H\times H}(J_H\cdot,\cdot) = -\dx \eta|_{H\times H}(\cdot,J_H\cdot)$.
\end{proposition}

\begin{proof}
Recall that $(\phi_r)_{r\geqslant 0}$ and $(\dx\eta_r)_{r\geqslant 0}$ locally uniformly converge to $\phi$ and $\dx \eta$.
The result follows by taking the limit in Lemma \ref{lem:almost_type_(1,1)} as $r\to +\infty$.
\end{proof}

Proposition \ref{prop:J_H_is_(1,1)} shows that the condition (2) of Proposition \ref{prop:integrability_conditions} is satisfied.
Let us show that the condition (1) is also satisfied.

\begin{lemma}\label{prop:nijenhuis_r}
For all $r\geqslant 0$, $p\in \dK$ and $u,v\in T_p\dK$, we have
\begin{equation*}\begin{split}
N_{\phi_r}(u,v) &= e^r\eta_r(v)(\phi_r S_r u - S_r\phi_r u) - e^r\eta_r(v)(\phi_r S_r v - S_r \phi_r v)\\
&\quad   + \dx \eta_r(u,v)\xi(r).
\end{split}
\end{equation*}
\end{lemma}

\begin{proof}
For $X$ and $Y$ vector fields on $\bar{M\setminus K}$, it holds that
\begin{equation*}
\begin{split}
-\Phi^2[X,Y] &= -\Phi^2 \nabla_XY + \Phi^2 \nabla_YX,\\
-[\Phi X,\Phi Y] &= -\nabla_{\Phi X}(\Phi Y) + \nabla_{\Phi Y}(\Phi X)\\
&= -(\nabla_{\Phi X}\Phi)Y - \Phi \nabla_{\Phi X}Y + (\nabla_{\Phi Y}\Phi)X + \Phi \nabla_{\Phi Y}X,\\
\Phi[\Phi X,Y] &= \Phi \nabla_{\Phi X} Y - \Phi\nabla_Y (\Phi X)\\
&= \Phi \nabla_{\Phi X} Y - \Phi (\nabla_Y\Phi)X - \Phi^2 \nabla_{X}Y, \text{ and}\\
\Phi[X,\Phi Y] &= \Phi \nabla_X(\Phi Y) - \Phi \nabla_{\Phi Y} X\\
&= \Phi(\nabla_X\Phi)Y + \Phi^2\nabla_XY - \Phi \nabla_{\Phi Y}X.
\end{split}
\end{equation*}
Recall that $N_{\Phi}(X,Y) = -\Phi^2[X,Y] -[\Phi X, \Phi Y] + \Phi[\Phi X, Y] + \Phi [X,\Phi Y]$.
Therefore,
\begin{equation*}
N_{\Phi}(X,Y) = \Phi(\nabla_{X}\Phi)Y - (\nabla_{\Phi X}\Phi)Y  + (\nabla_{\Phi Y}\Phi)X - \Phi(\nabla_{Y}\Phi)X.
\end{equation*}
Recall that $\nabla g = 0$, $\nabla J = 0$, $\nabla \delr = S$ and $\nabla \Jdelr = J S$.
Hence, by the very definition of $\Phi$, it holds that
\begin{equation*}
\begin{split}
(\nabla_{X}\Phi) Y &= -g(Y,SX) \Jdelr - g(Y,\delr)JSX\\
&\quad + g(Y,JSX) \delr + g(Y,\Jdelr) SX.
\end{split}
\end{equation*}
Since $\Phi\delr = \Phi \Jdelr = 0$, it holds that
\begin{equation*}
\Phi (\nabla_X\Phi) Y = -g(Y,\delr)\Phi JSX + g(Y,\Jdelr)\Phi SX.
\end{equation*}
Similarly, the following equality holds
\begin{equation*}
\begin{split}
(\nabla_{\Phi X}\Phi)Y &= -g(Y,S\Phi X) \Jdelr - g(Y,\delr)JS\Phi X \\
&\quad + g(Y,JS\Phi X)\delr + g(Y,\Jdelr)S\Phi X.
\end{split}
\end{equation*}
It follows by the anti-symmetry in $X$ and $Y$ that
\begin{equation}\label{eq:third_fourth_lines} \begin{split}
N_{\Phi}(X,Y) &= g(Y,\Jdelr)( \Phi SX - S\Phi X) - g(X,\Jdelr) (\Phi SY -S\Phi Y)\\
& \quad +(g(Y,S\Phi X) - g(X,S\Phi Y)) \Jdelr\\
& \quad + (g(Y,JS\Phi X) - g(X,JS\Phi Y)) \delr\\
& \quad - g(Y,\delr) JS\Phi X + g(X,\delr) JS\Phi Y.
\end{split}
\end{equation}
Notice that $g(X,JS\Phi Y) = -g(JX, S\Phi Y)$.
Assume from now that $X$ is orthogonal to $\delr$.
Then $JX$ is orthogonal to $\Jdelr$.
Since $S\Phi Y$ is orthogonal to $\delr$, it follows that $g(JX, S\Phi Y) = g(\Phi X,S\Phi Y)$.
In particular, if $X$ and $Y$ are both orthogonal to $\delr$, the terms in the third and fourth lines of equation \eqref{eq:third_fourth_lines} vanish.
Hence, if $u$ and $v$ are vector fields on $\dK$, it holds that
\begin{equation*}\begin{split}
N_{\Phi}(Y_u,Y_v) &= g(Y_v,\Jdelr)(\Phi S Y_u - S\Phi Y_u) - g(Y_u,\Jdelr)(\Phi S Y_v - S\Phi Y_v)\\
& \quad + (g(Y_v,S\Phi Y_u) - g(Y_u,S\Phi Y_v)) \Jdelr.
\end{split}
\end{equation*}
Recall from equation \eqref{eq:deta_r} that $\dx \eta_r(u,v) = e^{-r}(g(Y_v,JSY_u) - g(Y_u,JSY_v))$.
In addition, notice that
\begin{equation*}
g(Y_v,JSY_u) = -g(JY_v,SY_u) = -g(SJY_v,Y_u) = -g(S\Phi Y_v,Y_u).
\end{equation*}
Hence, $g(Y_v,S\Phi Y_u) - g(Y_u,S\Phi Y_v) = e^r\dx\eta_r(u,v)$.
The result now follows from the equality $E(r,\cdot)^* e^r\Jdelr = \xi(r)$, the definition of $\eta_r$ and the properties of the pull-back.
\end{proof}
\begin{proposition}\label{prop:nijenhuis_phi}
The restriction $N_{\phi}|_{H\times H}$ is equal to the tensor $\dx \eta \otimes \xi$.
\end{proposition}

\begin{proof}
By the convergence in $\mathcal{C}^1$ topology of $\phi_r$ to $\phi$, it follows that for all $u,v$, tangent to $H$, $N_{\phi_r}(u,v) \to N_{\phi}(u,v)$ as $r\to +\infty$.
Since $H= \ker \eta$, it follows from equation \eqref{eq:eta-eta_r} that there exists $c>0$ such that $|e^r\eta_r(u)| \leqslant c\|u\|_ge^{-\frac{r}{2}}$ and $|\eta_r(v)| \leqslant c\|v\|_ge^{-\frac{r}{2}}$.
It then follows from Lemma \ref{lem:s_r-phi_r_commute} that there exists $c'>0$ such that
\begin{equation*}
\forall r \geqslant 0, \forall u,v \in \Gamma(H),\quad \|N_{\phi_r}(u,v) - \dx \eta_r(u,v)\xi^r\|_g \leqslant c' \|u\|_g\|v\|_{g_0} e^{-\frac{3r}{2}}.
\end{equation*}
The result follows by taking the limit as $r\to +\infty$.
\end{proof}

We shall now highlight the link between the canonical form $\eta$, the Carnot-Carathéodory metric $\gamma_H$, and the almost-complex structure $J_H$.

\begin{proposition}\label{prop:strictly_pseudoconvex}
The Carnot-Carathéodory metric $\gamma_H$, the exterior differential of the canonical $1$-form at infinity $\eta$ and the tensor $\phi$ are related by the equality $\gamma_H = \dx \eta(\cdot,\phi \cdot)$.
In particular, the restriction of $\gamma_H$ to $H\times H$ is given by $\dx\eta|_{H\times H}(\cdot,J_H,\cdot)$.
\end{proposition}

\begin{proof}
Fix a radially parallel $J$-orthonormal frame $\{\delr,\Jdelr,E_1,\ldots,E_{2n}\}$.
From equation \eqref{eq:wedge}, it holds that $\dx \eta$ locally reads
\begin{equation*}
\dx \eta = \sum_{k=1}^n \eta^{2k-1}\wedge \eta^{2k} = \sum_{k=1}^{n} \eta^{2k-1}\otimes \eta^{2k} - \eta^{2k}\otimes \eta^{2k-1},
\end{equation*}
while $\gamma_H$ is locally expressed by
\begin{equation*}
\gamma_H = \sum_{j=1}^{2n} \eta^j\otimes \eta^j.
\end{equation*}
Since the radially parallel orthonormal frame is chosen to be a $J$-orthonormal frame, it holds that for all $r\geqslant 0$, and for all $k\in \{1,\ldots,n\}$,
\begin{equation*}
\begin{cases}
\eta_r^{2k-1}\circ \phi_r &= - \eta_r^{2k},\\
\eta_r^{2k}\circ \phi_r &=  \eta_r^{2k-1}.
\end{cases}
\end{equation*}
Taking the limit as $r\to +\infty$ shows that for all $k\in \{1,\ldots,n\}$, $\eta^{2k-1}\circ \phi = - \eta^{2k}$ and $\eta^{2k}\circ \phi =\eta^{2k-1}$.
It follows that
\begin{equation*}
\dx \eta (\cdot,\phi\cdot) = \sum_{k=1}^n \eta^{2k-1}\otimes \eta^{2k-1}+\eta^{2k}\otimes \eta^{2k} = \gamma_H,
\end{equation*}
which concludes the proof.
\end{proof}

We are now able to prove the last of our main Theorems (we restate the hypotheses for the reader's convenience).

\begin{thmx}
\label{thm:final}
Let $(M,g,J)$ be a complete non-compact K\"ahler manifold with an essential subset $K$, such that the sectional curvature of $\bar{M\setminus K}$ is negative.
Assume that $M$ is \ALCH and \ALS of order $a$ and $b$ with $\min\{a,b\} >\frac{3}{2}$.
Then $(\dK,H,J_H)$ is a strictly pseudoconvex CR manifold of class $\mathcal{C}^1$.
\end{thmx}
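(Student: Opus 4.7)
The plan is simply to assemble the machinery that has been patiently built up throughout the paper; no new geometric input is required. I would begin by recalling that Theorem \ref{thm:C1} (which applies since $\min\{a,b\}>\frac{3}{2}>1$) already gives that the canonical $1$-form at infinity $\eta$ is a contact form on $\dK$ of class $\mathcal{C}^1$, with contact distribution $H=\ker\eta$. Combined with Theorem \ref{thm:gamma_H_C1}, which requires the sharper assumption $\min\{a,b\}>\frac{3}{2}$, the Carnot-Carathéodory metric $\gamma_H$ is also of class $\mathcal{C}^1$. Proposition \ref{prop:almost_complex}, proved under the same hypothesis, produces a $\mathcal{C}^1$ field of endomorphisms $\phi$ on $\dK$, whose restriction $J_H=\phi|_H$ is a $\mathcal{C}^1$ almost complex structure on $H$ (Definition \ref{dfn:phi}).

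The heart of the statement is therefore the integrability of $J_H$. For this I would invoke the characterization of Proposition \ref{prop:integrability_conditions}, which reduces integrability to the two conditions
\begin{equation*}
N_{\phi}|_{H\times H}=\dx\eta|_{H\times H}\otimes \xi,\qquad \dx\eta|_{H\times H}(J_H\cdot,\cdot)=-\dx\eta|_{H\times H}(\cdot,J_H\cdot).
\end{equation*}
The second condition is precisely the content of Proposition \ref{prop:J_H_is_(1,1)}, obtained by passing to the limit in the approximation Lemma \ref{lem:almost_type_(1,1)}, while the first is Proposition \ref{prop:nijenhuis_phi}, obtained by passing to the limit in the curvature identity of Lemma \ref{prop:nijenhuis_r} and using the shape-operator asymptotics of Lemmas \ref{lem:s_r_asymptotic} and \ref{lem:s_r-phi_r_commute}. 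With both conditions satisfied, $(H,J_H)$ is integrable, so $(\dK,H,J_H)$ is a $\mathcal{C}^1$ CR manifold.

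It remains to check strict pseudoconvexity, that is, positive definiteness of $\dx\eta(\cdot,J_H\cdot)$ on $H$. This is immediate from Proposition \ref{prop:strictly_pseudoconvex}, which asserts the global identity $\gamma_H=\dx\eta(\cdot,\phi\cdot)$; restricting to $H\times H$ yields $\gamma_H|_{H\times H}=\dx\eta|_{H\times H}(\cdot,J_H\cdot)$, and Theorem \ref{thm:continuous} guarantees that $\gamma_H$ is positive definite on $H$. Together with the contact property of $\eta$, this completes the proof.

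In honesty there is no real obstacle left: the single non-trivial analytic step has already been carried out in Proposition \ref{prop:nijenhuis_phi}, where one must use that for $u,v$ tangent to $H=\ker\eta$ the pointwise value $\eta_r(u)$ is \emph{not} zero but only $O(e^{-3r/2})$ by \eqref{eq:eta-eta_r}, so the non-tensorial terms $e^r\eta_r(v)(\phi_rS_ru-S_r\phi_ru)$ in the Nijenhuis identity of Lemma \ref{prop:nijenhuis_r} decay because the $e^r$ factor is killed by the $O(e^{-3r/2})$ smallness of $\eta_r|_H$ together with the commutator estimate $\|S_r\phi_r-\phi_rS_r\|_g=O(e^{-r})$. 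Thus the whole argument is, at this point, a bookkeeping exercise gathering Theorems \ref{thm:C1} and \ref{thm:gamma_H_C1}, Propositions \ref{prop:almost_complex}, \ref{prop:integrability_conditions}, \ref{prop:J_H_is_(1,1)}, \ref{prop:nijenhuis_phi} and \ref{prop:strictly_pseudoconvex}.
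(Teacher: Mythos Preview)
Your proposal is correct and follows essentially the same route as the paper's own proof: you invoke Proposition \ref{prop:almost_complex} for the $\mathcal{C}^1$ almost complex structure, verify integrability via Proposition \ref{prop:integrability_conditions} using Propositions \ref{prop:J_H_is_(1,1)} and \ref{prop:nijenhuis_phi}, and conclude strict pseudoconvexity from Proposition \ref{prop:strictly_pseudoconvex} together with the positive definiteness of $\gamma_H$ on $H$. The only minor difference is that you explicitly cite Theorem \ref{thm:gamma_H_C1} and Theorem \ref{thm:continuous}, whereas the paper's proof leaves these implicit, but the logical skeleton is identical.
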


\begin{proof}
Let $\eta$ be the canonical form, $\gamma_H$ be the Carnot-Carath\'eodory metric and $\phi$ be defined as in Definition \ref{dfn:phi}.
From Proposition \ref{prop:almost_complex}, $\phi$ induces a $\mathcal{C}^1$ almost complex structure $J_H = \phi|_H$ on the distribution $H=\ker\eta$, which is contact (Theorem \ref{thm:C1}).
It follows from propositions \ref{prop:J_H_is_(1,1)} and \ref{prop:nijenhuis_phi} that $\phi$ satisfies
\begin{equation*}
\forall u,v \in \Gamma(H),\quad
\begin{cases}
N_{\phi}(u,v) &= \dx \eta(u,v) \xi, \text{ and}\\
\dx \eta(\phi u, v) &= -\dx \eta (u,\phi v).
\end{cases}
\end{equation*}
It then follows from Proposition \ref{prop:integrability_conditions} that $J_H$ is integrable, and $(\dK, H,J_H)$ is a CR manifold of class $\mathcal{C}^1$.
From Proposition \ref{prop:strictly_pseudoconvex}, it holds that $\dx \eta|_{H\times H}(\cdot,J_H \cdot) =\gamma_H$, and since $\gamma_H$ is positive definite on $H$, it follows that $(\dK,H,J_H)$ is strictly pseudoconvex.
This concludes the proof.
\end{proof}

\begin{remark}
If $M$ has real dimension $4$, then the contact structure on the boundary of $M$ is of rank $2$.
Since almost complex structures (when they are at least of class $\mathcal{C}^1$) are always integrable in dimension $2$, the proof of Theorem \ref{thm:final} is thus considerably reduced in that specific case.
\end{remark}

\begin{appendices}
\numberwithin{equation}{section}
\section{Curvature computations}
\label{appendix:A}

\begin{lemma}\label{appendix:thmB1}
In the context of Proposition \ref{prop:convergence_to_alpha}:
\begin{equation*}
R^0(\delr,\Jdelr,Y_v,SY_u) = -\frac{1}{2}g(SY_u,JY_v),
\end{equation*}
\begin{equation*}
R^0(\delr,Y_v,SY_u,\Jdelr) = \frac{1}{4}g(SY_u,JY_v),
\end{equation*}
\begin{equation*}
R^0(\delr,\nabla_{Y_v}Y_u,\delr,\Jdelr) = -g(\nabla_{Y_v}Y_u,\Jdelr), \text{ and}
\end{equation*}
\begin{equation*}
R^0(\delr,Y_v,\delr,\nabla_{Y_u}\Jdelr) = -\frac{1}{4}g(Y_v,\nabla_{Y_u}\Jdelr).
\end{equation*}
\end{lemma}
\begin{proof} Since $\delr \perp SY_u$, $\delr \perp Y_v$, $\Jdelr\perp JSY_u$ and $\Jdelr \perp JY_v$, it follows that
\begin{equation*}
\begin{split}
R^0(\delr,\Jdelr,Y_v,SY_u) &= \frac{1}{4}\big( g(\delr,SY_u)g(\Jdelr,Y_v) - g(\delr,Y_v)g(\Jdelr,SY_u)\\
&\quad+ g(\delr,JSY_u)g(\Jdelr,JY_v) - g(\delr,JY_v)g(\Jdelr,JSY_u)\\
&\quad	+ 2 g(\delr,J^2\delr)g(SY_u,JY_v)\big)\\
&= \frac{1}{4}(2g(\delr,-\delr)g(SY_u,JY_v))\\
&= -\frac{1}{2}g(SY_u,JY_v).
\end{split}
\end{equation*}
Similarly,
\begin{equation*}
\begin{split}
R^0(\delr,Y_v,SY_u,\Jdelr) &= \frac{1}{4}\big( g(\delr,\Jdelr)g(Y_v,SY_u) - g(\delr,SY_u)g(Y_v,\Jdelr) \\
& \quad + g(\delr,J^2\delr)g(Y_v,JSY_u) - g(\delr,JSY_u)g(Y_v,J^2\delr)\\
&\quad +2g(\delr,JY_v)g(\Jdelr,SY_u)\big)\\
&= \frac{1}{4}(-g(Y_v,JSY_u))\\
&= \frac{1}{4}g(SY_u,JY_v).
\end{split}
\end{equation*}
In addition, since $J$ is skew-symmetric,
\begin{equation*}
\begin{split}
R^0(\delr,\nabla_{Y_v}Y_u,\delr,\Jdelr) &= \frac{1}{4}\big( g(\delr,\Jdelr)g(\nabla_{Y_v}Y_u,\delr) - g(\delr,\delr)g(\nabla_{Y_v}Y_u,\Jdelr)\\
&\quad +g(\delr,J^2\delr)g(\nabla_{Y_v}Y_u,\Jdelr) - g(\delr,\Jdelr)g(\nabla_{Y_v}Y_u,\Jdelr) \\
&\quad + 2g(\delr,J\nabla_{Y_v}Y_u)g(\Jdelr,\Jdelr)\big)\\
&= \frac{1}{4}\big( -g(\nabla_{Y_v}Y_u,\Jdelr) -g(\nabla_{Y_v}Y_u,\Jdelr) - 2g(\nabla_{Y_v}Y_u,\Jdelr)\big)\\
&= -g(\nabla_{Y_v}Y_u,\Jdelr).
\end{split}
\end{equation*}
Finally, since $J$ is skew-symmetric and parallel, it holds that
\begin{equation*}
\begin{split}
R^0(\delr,Y_v,\delr,\nabla_{Y_u}\Jdelr) &= \frac{1}{4}\big( g(\delr,\nabla_{Y_u}\Jdelr)g(Y_v,\delr)-g(\delr,\delr) g(Y_v,\nabla_{Y_u}\Jdelr) \\
& \quad  + g(\delr,J\nabla_{Y_u}\Jdelr)g(Y_v,\Jdelr)-g(\delr,\Jdelr)g(Y_v,\nabla_{Y_u}\Jdelr)\\
&\quad + 2g(\delr,JY_v)g(\nabla_{Y_u}\Jdelr,\Jdelr)\big)\\
&= -\frac{1}{4}g(\nabla_{Y_u}\Jdelr,Y_v) + \frac{1}{2}g(\delr,JY_v)g(\nabla_{Y_u}\Jdelr,\Jdelr).
\end{split}
\end{equation*}
To conclude, note that since $\|\Jdelr\|_g^2 =1$ is constant, then $g(\nabla_{Y_u}\Jdelr,\Jdelr) = 0$.
\end{proof}

\begin{lemma}\label{appendix:B2}
In the context of Proposition \ref{prop:convergence_to_alphaj}:
\begin{equation*}
R^0(\delr,E_j,Y_v,SY_u)  = \frac{1}{4} g(SY_u,\Jdelr)g(Y_v,\JE_j) -\frac{1}{4} g(SY_u,\JE_j)g(Y_v,\Jdelr),
\end{equation*}
\begin{equation*}
R^0(\delr,Y_v,SY_u,E_j) = \frac{1}{4}g(SY_u,\Jdelr)g(Y_v,\JE_j) + \frac{1}{2}g(SY_u,\JE_j)g(Y_v,\Jdelr),
\end{equation*}
\begin{equation*}
R^0(\delr,\nabla_{Y_v}Y_u,\delr,E_j) = -\frac{1}{4}g(\nabla_{Y_v}Y_u,E_j), \text{ and}
\end{equation*}
\begin{equation*}
R^0(\delr,Y_v,\delr,\nabla_{Y_u}E_j) = -\frac{1}{4}g(Y_v,\nabla_{Y_u}E_j) - \frac{3}{4}g(SY_u,\JE_j)g(Y_v,\Jdelr).
\end{equation*}
\end{lemma}
\begin{proof}
Since $\delr\perp SY_u$, $\delr \perp Y_v$ and $\delr \perp \JE_j$, it holds that
\begin{equation*}
\begin{split}
R^0(\delr,E_j,Y_v,SY_u) &= \frac{1}{4}\big( g(\delr,SY_u)g(E_j,Y_v) - g(\delr,Y_v)g(E_j,SY_u)\\
& \quad + g(\delr,JSY_u)g(E_j,JY_v) - g(\delr,JY_v)g(E_j,JSY_u)\\
& \quad + 2g(\delr,\JE_j)g(SY_u,JY_v)\big) \\
&= \frac{1}{4}(g(\delr,JSY_u)g(E_j,JY_v) - g(\delr,Y_v)g(E_j,JSY_u))\\
&= \frac{1}{4}g(SY_u,\Jdelr) - \frac{1}{4}g(SY_u,\JE_j).
\end{split}
\end{equation*}
Similarly, it holds that
\begin{equation*}
\begin{split}
R^0(\delr,Y_v,SY_u,E_j) &= \frac{1}{4}\big( g(\delr,E_j)g(Y_v,SY_u) - g(\delr,SY_u)g(Y_v,E_j)\\
& \quad + g(\delr,\JE_j)g(Y_v,JSY_u) - g(\delr,JSY_u)g(Y_v,\JE_j)\\
& \quad + 2g(\delr,JY_v)g(E_j,JSY_u)\big) \\
&= -\frac{1}{4}g(\delr,JSY_u)g(Y_v,\JE_j) + \frac{1}{2}g(\delr,JY_v)g(E_j,JSY_u)\\
&= \frac{1}{4}g(SY_u,\Jdelr)g(Y_v,\JE_j) +\frac{1}{2}g(SY_u,\JE_j)g(Y_v,\JE_j).
\end{split}
\end{equation*}
Since in addition $\delr \perp \Jdelr$, it follows that
\begin{equation*}
\begin{split}
R^0(\delr,\nabla_{Y_v}Y_u,\delr,E_j) &= \frac{1}{4}\big( g(\delr,E_j)g(\nabla_{Y_v}Y_u,\delr) - g(\delr,\delr)g(\nabla_{Y_v}Y_u,E_j)\\
& \quad + g(\delr,\JE_j)g(\nabla_{Y_v}Y_u,\Jdelr) - g(\delr,\Jdelr)g(\nabla_{Y_v}Y_u,\JE_j)\\
& \quad + 2g(\delr,J\nabla_{Y_v}Y_u)g(E_j,\Jdelr)\big) \\
&= -\frac{1}{4}g(\nabla_{Y_v}Y_u,E_j).
\end{split}
\end{equation*}
Finally, since $J$ is parallel and $g(\nabla_{Y_u}\JE_j,\delr) = -g(SY_u,\JE_j)$, it holds that
\begin{equation*}
\begin{split}
R^0(\delr,Y_v,\delr,\nabla_{Y_u}E_j) &= \frac{1}{4}\big( g(\delr,\nabla_{Y_u}E_j)g(Y_v,\delr) - g(\delr,\delr)g(Y_v,\nabla_{Y_u}E_j)\\
& \quad + g(\delr,J\nabla_{Y_u}E_j)g(Y_v,\Jdelr) - g(\delr,\Jdelr)g(Y_v,J\nabla_{Y_u}E_j)\\
& \quad + 2g(\delr,JY_v)g(\nabla_{Y_u}E_j,\Jdelr)\big) \\
&= -\frac{1}{4} g(Y_v,\nabla_{Y_u}E_j) +\frac{1}{4}g(\delr,J\nabla_{Y_u}E_j)g(Y_v,\Jdelr)\\
&\quad + \frac{1}{2}g(\delr,JY_v)g(\nabla_{Y_v}E_j,\Jdelr)\\
&= -\frac{1}{4}g(Y_v,\nabla_{Y_u}E_j) + \frac{3}{4}g(\nabla_{Y_u}\JE_j,\delr)g(Y_v,\Jdelr)\\
&= -\frac{1}{4}g(Y_v,\nabla_{Y_u}E_j) - \frac{3}{4}g(SY_u,\JE_j)g(Y_v,\Jdelr).
\end{split}
\end{equation*}
\end{proof}

\end{appendices}

\printbibliography

\end{document}